\numberwithin{equation}{section}
\theoremstyle{plain}
\newtheorem{theorem}[equation]{Theorem}
\newtheorem{lemma}[equation]{Lemma}
\newtheorem{proposition}[equation]{Proposition}
\theoremstyle{definition}
\newtheorem{definition}[equation]{Definition}
\newtheorem{example}[equation]{Example}
\numberwithin{equation}{section}
\newcommand{\R}{{\mathbb R}}
\newcommand{\N}{{\mathbb N}}
\newcommand{\Om}{\Omega}
\providecommand{\vint}[1]{\mathchoice
          {\mathop{\vrule width 5pt height 3 pt depth -2.5pt
                  \kern -9pt \kern 1pt\intop}\nolimits_{\kern -5pt{#1}}}
          {\mathop{\vrule width 5pt height 3 pt depth -2.6pt
                  \kern -6pt \intop}\nolimits_{\kern -3pt{#1}}}
          {\mathop{\vrule width 5pt height 3 pt depth -2.6pt
                  \kern -6pt \intop}\nolimits_{\kern -3pt{#1}}}
          {\mathop{\vrule width 5pt height 3 pt depth -2.6pt
                  \kern -6pt \intop}\nolimits_{\kern -3pt{#1}}}}
\newcommand{\eps}{\varepsilon}
\newcommand{\loc}{\mathrm{loc}}
\newcommand{\BV}{\mathrm{BV}}
\newcommand{\ch}{\text{\raise 1.3pt \hbox{$\chi$}\kern-0.2pt}}
\DeclareMathOperator{\frm}{\mathfrak{m}}
\DeclareMathOperator{\dfrm}{d\mathfrak{m}}
\newcommand{\dd}{\mathrm{d}}
\DeclareMathOperator{\Mod}{Mod}
\DeclareMathOperator{\capa}{Cap}
\DeclareMathOperator{\rcapa}{cap}
\DeclareMathOperator{\diam}{diam}
\DeclareMathOperator{\supp}{spt}
\DeclareMathOperator{\inte}{int}
\DeclareMathOperator{\fint}{fine-int}
\begin{document}
\title{A topological characterization \\ of indecomposable sets of finite perimeter
\let\thefootnote\relax\footnotetext{{\bf Mathematics Subject Classification (2020)}: 30L99, 26B30, 46E36
\hfill \break {\it Keywords\,}: set of finite perimeter, indecomposable set,
metric measure space, fine topology, two-sidedness property, Poincar\'e inequality
}}
\author{Paolo Bonicatto, Panu Lahti, and Enrico Pasqualetto}
\maketitle

\begin{abstract}
We prove that a set of finite perimeter is indecomposable if and only if it is,
up to a  choice of suitable representative, connected in the \emph{1-fine topology}.
This gives a topological characterization of indecomposability which is new even in
Euclidean spaces. Our approach relies crucially on the metric space theory of functions of
bounded variation, and we are able to prove our main result in a complete, doubling
metric measure space supporting a $1$-Poincar\'{e} inequality and having the
two-sidedness property (this class includes all
Riemannian manifolds, Carnot groups, and ${\sf RCD}(K,N)$ spaces with $K\in\mathbb R$ and $N<\infty$).
As an immediate corollary, we obtain an alternative proof of the decomposition theorem for sets of finite perimeter into maximal indecomposable components.

\end{abstract}

\section{Introduction}
A set of finite perimeter $E$ in a metric measure space $(X,d,\frm)$ is said to be \emph{indecomposable}
if it cannot be written as the disjoint union of two non-$\frm$-negligible sets $F,G$ with
$P(E,X)=P(F,X)+P(G,X)$. This measure-theoretic notion is similar to the topological notion of
connectedness. Properties of indecomposable sets in Euclidean spaces were studied by
Federer \cite[Section 4.2.25]{Fed} in the more
general framework of currents, and then by
Ambrosio et al.\ \cite{ACMM} who proved that a set of finite perimeter in $\R^n$ can always
be uniquely decomposed into maximal indecomposable sets, termed the \emph{essential connected components}.
See also \cite{ACMM} for a long list of earlier related references, in particular
in connection with image processing.

The theory was generalized to metric measure spaces by the first and third named authors
together with Tapio Rajala in \cite{BPR},
and by the second named author in \cite{L-decom}.
As is common in analysis on metric measure spaces,
in these papers as well as the current one the space $(X,d,\frm)$
is assumed to be complete, equipped with a doubling measure, and to support
a $1$-Poincar\'e inequality;
we will refer to any such space as a PI space.
We will recall the main definitions in Section \ref{sec:preliminaries}.
Indecomposable sets have been studied
also in \cite[Section 3.4]{GMS}, 
and in connection with fractional perimeter, too -- see, e.g.\, \cite{DCNRV}.
In \cite{CCDM,CCDM2} a similar concept was considered for sets that are not necessarily of finite perimeter,
and it was noted that a measure-theoretic rather than a topological notion of connectedness seems to be the 
appropriate one.

Nonetheless, throughout the development of the theory it has been a natural question whether
the measure-theoretic notion of indecomposability
would in fact coincide with the usual notion of connectedness
in a suitable topology.
In this paper we show that this is indeed the case.
See Section \ref{sec:preliminaries} for the definition of the two-sidedness
property and the $1$-fine topology.

\begin{theorem}\label{thm:iff intro}
Let $(X,d,\frm)$ be a PI space and suppose $X$ has the two-sidedness property. Let $E\subset X$ with $P(E,X)<\infty$.
Then the set $E$ is indecomposable
if and only if $\fint I_E$ is connected in the $1$-fine topology.

Moreover, the connected components of $\fint I_E$ in the $1$-fine topology are exactly
the essential connected components of $E$.
\end{theorem}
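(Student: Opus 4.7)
The plan is to prove both implications by their contrapositives, and to derive the claim on essential connected components from the resulting equivalence. Throughout, I rely on the general principle that, modulo $1$-polar sets, $\fint I_F$ serves as a canonical $1$-finely open representative of a set $F$ of finite perimeter, and that $1$-fine connectedness is insensitive to modifications by $1$-polar sets.

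For the implication ``indecomposable $\Rightarrow$ $1$-finely connected'', suppose by contraposition that $\fint I_E = U \sqcup V$, with $U,V$ disjoint, non-empty, and relatively $1$-finely open in $\fint I_E$. Since $\fint I_E$ is itself $1$-finely open in $X$, so are $U$ and $V$. Define $F$ and $G$ (to be shown of finite perimeter) by $F := E\cap U$ and $G := E\cap V$ modulo $\frm$-null sets; both have positive measure. The central step is the identity
\[
P(E,X) \;=\; P(F,X) + P(G,X).
\]
The inequality $\le$ follows from subadditivity of the perimeter. For the reverse, the two-sidedness property guarantees that at $P(E,\cdot)$-almost every point $x$ of the essential boundary of $E$, the set $E$ has density one on one side and zero on the other. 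Because $U$ and $V$ are $1$-finely separated within $\fint I_E$, each such $x$ lies in the $1$-fine closure of at most one of $U$, $V$, and therefore contributes to exactly one of $P(F,X)$, $P(G,X)$. This additivity exhibits $E$ as decomposable.

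For the converse ``$\fint I_E$ $1$-finely connected $\Rightarrow$ $E$ indecomposable'', I contrapose and suppose $E = F \sqcup G$ with $\frm(F),\frm(G)>0$ and $P(E,X) = P(F,X) + P(G,X)$. Perimeter additivity forces the essential boundaries of $F$ and $G$ to be essentially disjoint, which via the two-sidedness property implies that $F$ and $G$ are $1$-finely separated; thus $\fint I_F$ and $\fint I_G$ are disjoint, non-empty, $1$-finely open subsets of $X$. Moreover, every density-one point of $E$ is, outside a $1$-polar set, a density-one point of $F$ or of $G$, whence $I_E = I_F \cup I_G$ and $\fint I_E = \fint I_F \cup \fint I_G$ up to $1$-polar sets. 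This gives a $1$-fine disconnection of $\fint I_E$.

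The statement about essential connected components then follows by transporting the evident equivalence between ``maximal indecomposable subset of $E$'' (the essential connected components) and ``maximal $1$-finely connected subset of $\fint I_E$'' (the $1$-fine connected components) through the main equivalence just proved. The chief obstacle is the forward direction, specifically turning the topological separation of $U$ and $V$ into the perimeter identity $P(E,X) = P(F,X) + P(G,X)$; this is exactly where the two-sidedness property is indispensable, since without it the essential boundary of $E$ could carry mass that cannot be unambiguously attributed to either side of the split.
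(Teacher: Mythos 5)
Your overall architecture (two contrapositives plus a transport argument for the components) matches the paper's, but the proposal assumes away the technical heart of the proof. The critical gap is in the direction ``$\fint I_E$ $1$-finely connected $\Rightarrow$ $E$ indecomposable''. From a decomposition $E=F\sqcup G$ with $P(E,X)=P(F,X)+P(G,X)$ you correctly obtain (via isotropicity) that $\mathcal H(\partial^*F\cap\partial^*G)=0$ and hence that $\fint I_E=\fint I_F\cup\fint I_G\cup N$ with $\capa_1(N)=0$. But a partition into two nonempty $1$-finely open sets \emph{up to a polar set} is not a $1$-fine disconnection: the points of $N$ must be assigned to one side or the other so that both sides remain relatively $1$-finely open, and this is impossible at any $x\in N\cap b_1(\fint I_F)\cap b_1(\fint I_G)$, i.e.\ where neither piece is $1$-thin. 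Ruling out such points is exactly the content of the ``general principle'' you invoke at the outset --- that $1$-fine connectedness is insensitive to polar modifications --- which for $p=1$ is not a known general fact but a main theorem of this paper (Theorem \ref{thm:fine domain}, and the argument of Part 1 of the proof of Theorem \ref{thm:iff}). Its proof requires the strong Cartan property for $p=1$ (Theorem \ref{thm:strong Cartan property}), the construction of the auxiliary metric measure space $(F,d_M^F,\frm)$ via obstacle problems (Proposition \ref{prop:constructing the quasiconvex space}), and the existence of strong boundary points (Proposition \ref{prop:density points}). Citing it as a known principle begs the question.

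There are also gaps in the other direction. First, you never establish that $F=E\cap U$ and $G=E\cap V$ have finite perimeter; this is Proposition \ref{prop:finite perimeter} in the paper and is itself nontrivial, resting on the fine Federer-type characterization (Theorem \ref{thm:Federer fine}) and the inclusion $\partial^1 V'\subset\partial^1 I_E$ for a fine component $V'$. Without it, the representation of $P(F,X)$ as an integral over $\partial^*F$, which your ``contributes to exactly one perimeter'' step implicitly uses, is not available. Second, the claim that a common essential boundary point ``lies in the $1$-fine closure of at most one of $U,V$'' does not follow from $U,V$ being disjoint and $1$-finely open (a point outside $U\cup V$ can lie in both fine closures). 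The correct mechanism, as in Part 2 of the paper's proof of Theorem \ref{thm:iff}, is that two-sidedness forces any $x\in\partial^*F\cap\partial^*G$ (outside an $\mathcal H$-null set) to satisfy $\Theta(X\setminus(F\cup G),x)=0$, hence $x\in I_E$, hence $x\in\fint I_E=U\cup V$ up to an $\mathcal H$-null set; but then $x$ lies in one finely open piece and is therefore a density point of it, contradicting $x\in\partial^*$ of the other. Finally, the ``moreover'' part needs more than transport: one must know that each fine component is itself a set of finite perimeter and that the perimeters sum to $P(E,X)$ (the paper's \eqref{eq:perimeter countable sum} and Theorem \ref{thm:main}), which again relies on Proposition \ref{prop:finite perimeter}.
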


Here $I_E$ is the measure-theoretic interior, and $\fint I_E$ is its interior in the $1$-fine topology;
it turns out that the difference $I_E\setminus \fint I_E$ is $\frm$-negligible (even negligible with respect to the
codimension-one Hausdorff measure).
As a by-product of our main theorem, we obtain 
a new proof of the existence of the essential connected components of $E$.

\subsection*{Structure of the paper}
We introduce definitions and preliminary results in Section \ref{sec:preliminaries}.
In Sections \ref{sec:contruct space} and \ref{sec:locally connected}
we develop the theory of the $1$-fine topology, proving in particular that
it is locally connected.
We prove Theorem \ref{thm:iff intro} in Section \ref{sec:main result on components}.
In Section \ref{sec:fine domains} we develop further the theory of $1$-fine domains,
and in Section \ref{sec:metric} we investigate the possibility
of obtaining the essential connected components by means of the usual metric topology.
Some of our theory relies on a result of independent interest: the
(strong) Cartan property for \(p=1\), which we prove in Section \ref{sec:Cartan}.

\subsection*{Acknowledgments.}
E.\ P.\ has been supported by the Research Council of Finland grant 362898.

\section{Notation and definitions}\label{sec:preliminaries}

In this section we introduce the basic notation, definitions,
and assumptions that are employed in the paper.

\subsection{Doubling and Poincar\'e inequality}

Throughout this paper, $(X,d,\frm)$ is a complete metric space that is equip\-ped
with a metric $d$ and a Borel regular outer measure $\frm$.
To avoid certain pathological situations, we assume that $X$ consists of at least two points.
When a property holds outside a set of $\frm$-measure zero, we say that it holds
almost everywhere, abbreviated a.e.
We say that the measure $\frm$ is \textbf{doubling} if
there exists a constant $C_d\ge 1$ such that
\[
0<\frm(B(x,2r))\le C_d\frm(B(x,r))<\infty
\]
for every ball $B(x,r):=\{y\in X\colon\,d(y,x)<r\}$, with $x\in X$, $r>0$; we understand balls to be open.
We will always assume $\frm$ to be doubling.
Given a ball $B=B(x,r)$, we sometimes denote $tB:=B(x,tr)$ for \(t>0\).
By iterating the doubling condition, we obtain that for any $x\in X$ and $y\in B(x,R)$ with $0<r\le R<\infty$, we have
\begin{equation}\label{eq:homogenous dimension}
	\frac{\frm(B(y,r))}{\frm(B(x,R))}\ge \frac{1}{C_d^2}\left(\frac{r}{R}\right)^{Q},
\end{equation}
where $Q>1$ only depends on the doubling constant $C_d$.

From the completeness of the space and the doubling property of the measure $\frm$,
we obtain that $X$ is proper, meaning that closed and bounded sets are compact.
All functions defined on $X$ or its subsets will take values in $[-\infty,\infty]$.
Given an open set $W\subset X$, we define $L^1_{\loc}(W)$
to be the class of functions $u$ on $W$ such that $u\in L^1(W')$ for every open $W'\Subset W$, where the latter notation means that
$\overline{W'}$ is a compact subset of $W$.
Other local spaces of functions are defined analogously.

\subsection{Curves, (weak) upper gradients, Newton-Sobolev space}
By a \textbf{(rectifiable) curve} we mean a continuous mapping $\gamma \colon I \to X$ from a compact interval $I$ of the real line into $X$, with finite \textbf{length} $\ell_{\gamma}<\infty$.
We assume every curve to be parametrized
by arc-length, which can always be done (see e.g.\ \cite[Theorem~3.2]{Hj}).

A nonnegative Borel function $g$ on $X$ is an \textbf{upper gradient
of a function} $u$
on $X$ if for all nonconstant curves $\gamma\colon [0,\ell_{\gamma}]\to X$, we have
\begin{equation}\label{eq:definition of upper gradient}
|u(x)-u(y)|\le \int_{\gamma} g\,ds:=\int_0^{\ell_{\gamma}} g(\gamma(s))\,\dd s,
\end{equation}
where $x$ and $y$ are the end points of $\gamma$.
We interpret $|u(x)-u(y)|=\infty$ whenever  
at least one of $|u(x)|$, $|u(y)|$ is infinite.
Upper gradients were originally introduced in \cite{HK}.

The $1$-\textbf{modulus of a family of curves} $\Gamma$ is defined by
\[
\Mod_{1}(\Gamma):=\inf\int_{X}\rho\, \dfrm,
\]
where the infimum is taken over all nonnegative Borel functions $\rho$
such that $\int_{\gamma}\rho\,\dd s\ge 1$ for every curve $\gamma\in\Gamma$.
A property is said to hold for $1$-almost every curve
if it fails only for a curve family with zero $1$-modulus. 
If $g$ is a nonnegative $\frm$-measurable function on $X$
and (\ref{eq:definition of upper gradient}) holds for $1$-almost every curve,
we say that $g$ is a \textbf{$1$-weak upper gradient} of $u$. 
By only considering curves $\gamma\colon [0,\ell_{\gamma}]\to A\subset X$,
we can talk about a function $g$ being a ($1$-weak) upper gradient of $u$ in $A$.

We say that $X$ supports a $1$-Poincar\'e inequality
if there exist finite constants $C_P>0$ and $\lambda \ge 1$ such that for every
ball $B(x,r)$, every $u\in L^1_{\loc}(X)$,
and every upper gradient $g$ of $u$,
we have
\[
\vint{B(x,r)}|u-u_{B(x,r)}|\, \dfrm 
\le C_P r\vint{B(x,\lambda r)}g\dfrm,
\]
where 
\[
u_{B(x,r)}:=\vint{B(x,r)}u\dfrm :=\frac 1{\frm(B(x,r))}\int_{B(x,r)}u\dfrm.
\]
We will always assume $X$ to support a $1$-Poincar\'e inequality.
A complete metric space with a doubling measure and supporting a Poincar\'e inequality is commonly called a PI space.

Let $W\subset X$ be an open set. We let
\[
\Vert u\Vert_{N^{1,1}(W)}:=\Vert u\Vert_{L^1(W)}+\inf \Vert g\Vert_{L^1(W)},
\]
where the infimum is taken over all $1$-weak upper gradients $g$ of $u$ in $W$.
Then we define the \textbf{Newton-Sobolev space}
\[
N^{1,1}(W):=\{u\colon \|u\|_{N^{1,1}(W)}<\infty\},
\]
which was first introduced in \cite{S}.

We understand Newton-Sobolev functions to be defined at every $x\in W$
(even though $\Vert \cdot\Vert_{N^{1,1}(W)}$ is then only a seminorm).
It is known that for any $u\in N_{\loc}^{1,1}(W)$ there exists a minimal $1$-weak
upper gradient of $u$ in $W$, always denoted by $g_{u}$, satisfying $g_{u}\le g$ 
a.e.\ in $W$ for every $1$-weak upper gradient $g\in L_{\loc}^{1}(W)$
of $u$ in $W$, see e.g.\ \cite[Theorem 2.25]{BB}.

\subsection{Functions of bounded variation}

Next we present the definition and basic properties of functions
of bounded variation on metric spaces, following \cite{M}. See also e.g.\ \cite{AFP, Fed} for the classical 
theory in the Euclidean setting.
Given an open set $W\subset X$ and a function $u\in L^1_{\loc}(W)$,
we define the \textbf{total variation} of $u$ in $W$ by
\[
\|Du\|(W):=\inf\left\{\liminf_{i\to\infty}\int_W g_{u_i}\dfrm:\, u_i\in N^{1,1}_{\loc}(W),\, u_i\to u\textrm{ in } L^1_{\loc}(W)\right\},
\]
where each $g_{u_i}$ is again the minimal $1$-weak upper gradient of $u_i$ in $W$.
(In \cite{M}, pointwise Lipschitz constants were used in place of upper gradients, but the theory
can be developed similarly with either definition.)
We say that a function $u\in L^1(W)$ is of bounded variation, 
and denote $u\in\BV(W)$, if $\|Du\|(W)<\infty$.

For an arbitrary set $A\subset X$, we define
\[
\|Du\|(A):=\inf\{\|Du\|(W):\, A\subset W,\,W\subset X
\text{ is open}\}.
\]
If $u\in L^1_{\loc}(W)$ and $\Vert Du\Vert(W)<\infty$,
then $\|Du\|(\cdot)$ is
a Borel regular outer measure on $W$ by \cite[Theorem 3.4]{M}.
An $\frm$-measurable set $E\subset X$ is said to have \textbf{finite perimeter}
in $W$ if $\|D\ch_E\|(W)<\infty$, where $\ch_E\colon X\to \{0,1\}$ is the characteristic function of $E$.
The perimeter of $E$ in $W$ is also denoted by
\[
P(E,W):=\|D\ch_E\|(W).
\]
If $P(E,X)<\infty$, we say briefly that $E$ is a set of finite perimeter.

For any measurable sets
$E_1,E_2\subset X$ and an open set $W\subset X$, we have
\begin{equation}\label{eq:perimeter subadditivity}
P(E_1\cap E_2,W)+P(E_1\cup E_2,W)\le P(E_1,W)+P(E_2,W);
\end{equation}
see \cite[Proposition 4.7]{M}; for sets $E_1,E_2$ with finite perimeter in $W$ the same then holds also with $W$
replaced by an arbitrary $A\subset W$.
Using this inequality (only for the union) as well as the lower semicontinuity of the total
variation with respect to $L_{\loc}^1$-convergence in open sets, we have
for any open $W\subset X$ and any $E_1,E_2\ldots \subset X$ that
\begin{equation}\label{eq:perimeter of union}
P\Bigg(\bigcup_{j=1}^{\infty}E_j,W\Bigg)
\le \sum_{j=1}^{\infty}P(E_j,W).
\end{equation}

The following \textbf{coarea formula} is given in \cite[Proposition 4.2]{M}:
if $\Omega\subset X$ is an open set and $u\in L^1_{\loc}(\Omega)$, then
\begin{equation}\label{eq:coarea}
\|Du\|(\Omega)=\int_{-\infty}^{\infty}P(\{u>t\},\Omega)\,\dd t,
\end{equation}
where we abbreviate $\{u>t\}:=\{x\in \Om:\,u(x)>t\}$.

Applying the Poincar\'e inequality to sequences of approximating
$N^{1,1}_{\loc}$-functions in the definition of the total variation, we get
the following $\BV$ version:
for every ball $B(x,r)$ and every 
$u\in L^1_{\loc}(X)$, we have
\[
\int_{B(x,r)}|u-u_{B(x,r)}|\dfrm
\le C_P r \Vert Du\Vert (B(x, \lambda r)).
\]
The $1$-Poincar\'e inequality implies the so-called Sobolev-Poincar\'e inequality,
see e.g.\ \cite[Theorem 4.21]{BB}, and by applying the latter to approximating $N^{1,1}_{\loc}$-functions
in the definition of the total variation, we get the following \textbf{Sobolev-Poincar\'e inequality} for $\BV$ functions:
For every ball $B(x,r)$ and every $u\in L^1_{\loc}(X)$, we have
\[
\left(\,\vint{B(x,r)}|u-u_{B(x,r)}|^{Q/(Q-1)}\,\dfrm\right)^{(Q-1)/Q}
\le C_{SP}r\frac{\Vert Du\Vert (B(x,2\lambda r))}{\frm(B(x,2\lambda r))},
\]
where $Q$ is the exponent from \eqref{eq:homogenous dimension} and
$C_{SP}=C_{SP}(C_d,C_P,\lambda)\ge 1$ is a finite constant.
For an $\frm$-measurable set $E\subset X$, this implies (see e.g.\ \cite[Equation (3.1)]{KoLa})
\[
\frac 12 \left(\frac{\min\{\frm(B(x,r)\cap E),\frm(B(x,r)\setminus E)\}}{\frm(B(x,r))}\right)^{(Q-1)/Q}
\le C_{SP}r\frac{P(E,B(x,2\lambda r))}{\frm(B(x,2\lambda r))}.
\]
Rearranged, this implies
\begin{equation}\label{eq:relative isoperimetric inequality}
	\begin{split}
		&\min\{\frm(B(x,r)\cap E),\frm(B(x,r)\setminus E)\}\\
		&\quad \le 2 C_{SP}r\left(\frac{\min\{\frm(B(x,r)\cap E),\frm(B(x,r)\setminus E)\}}{\frm(B(x,r))}\right)^{1/Q}P(E,B(x,2\lambda r)).
	\end{split}
\end{equation}
From the $1$-Poincar\'e inequality,
by \cite[Theorem 4.21, Theorem 5.51]{BB}
we also get the following Sobolev inequality:
if $x\in X$, $0<r<\frac{1}{4}\diam X$, and $u\in N^{1,1}(X)$ with $u=0$
in $X\setminus B(x,r)$, then
\[
\int_{B(x,r)} |u|\dfrm \le C_S r \int_{B(x,r)}  g_u\dfrm
\]
for a finite constant $C_S=C_S(C_d,C_P)\ge 1$.
For any $\frm$-measurable set $E\subset B(x,r)$, applying the Sobolev inequality
to a suitable sequence approximating $\ch_E$,
we get the \textbf{isoperimetric inequality}
\begin{equation}\label{eq:isop inequality with zero boundary values}
\frm(E)\le C_S r P(E,X).
\end{equation}

\subsection{Lower and upper approximate limits, Hausdorff measure and the measure topology}
The \textbf{lower} and \textbf{upper approximate limits} of an extended real-valued function $u$ on $X$ are defined respectively by
\begin{equation}\label{eq:lower approximate limit}
	u^{\wedge}(x):
	=\sup\left\{t\in\R:\,\lim_{r\to 0}\frac{\frm(B(x,r)\cap\{u<t\})}{\frm(B(x,r))}=0\right\}
\end{equation}
and
\begin{equation}\label{eq:upper approximate limit}
	u^{\vee}(x):
	=\inf\left\{t\in\R:\,\lim_{r\to 0}\frac{\frm(B(x,r)\cap\{u>t\})}{\frm(B(x,r))}=0\right\}.
\end{equation}

For any set $A\subset X$ and $0<R<\infty$, the restricted \textbf{Hausdorff content
of codimension one} is defined by
\[
\mathcal{H}_{R}(A):=\inf\left\{ \sum_{j\in I}
\frac{\frm(B(x_{j},r_{j}))}{r_{j}}:\,A\subset\bigcup_{j\in I}B(x_{j},r_{j}),\,r_{j}\le R\right\},
\]
where the infimum is taken over at most countable index sets $I$.
The \textbf{codimension-one Hausdorff measure} of $A\subset X$ is then defined by
\[
\mathcal{H}(A):=\lim_{R\rightarrow 0}\mathcal{H}_{R}(A).
\]

For $E\subset X$ and $x\in X$, we denote
\begin{equation}\label{eq:upper measure density}
\Theta^*(E,x):=\limsup_{r\to 0}\frac{\frm(E\cap B(x,r))}{\frm(B(x,r))},
\end{equation}
and if the limit exists, we denote it by $\Theta(E,x)$.
For any $E\subset X$, the \textbf{measure-theoretic interior} $I_E$ is defined as the set of points
$x\in X$ for which $\Theta(X\setminus E,x)=0$, whereas the
\textbf{measure-theoretic exterior} $O_E$ is defined as the set of points
$x\in X$ for which $\Theta(E,x)=0$.
The \textbf{measure-theoretic boundary} $\partial^{*}E$ is defined
as the set of points $x\in X$
at which both $E$ and its complement have strictly positive upper density, i.e.\
\[
\Theta^*(E,x)>0\quad
\textrm{and}\quad\Theta^*(X\setminus E,x)>0.
\]

We define the \textbf{measure topology} as follows.

\begin{definition}\label{def:measure topology}
A set $U\subset X$ is open in the measure topology if $\Theta(X\setminus U,x)=0$ for every $x\in U$.
\end{definition}

Given a number $0<\gamma\le 1/2$, we also define the \textbf{strong boundary}
\begin{equation}\label{eq:strong boundary}
	\Sigma_{\gamma} E:=\left\{x\in X:\, \liminf_{r\to 0}\frac{\frm(B(x,r)\cap E)}{\frm(B(x,r))}\ge \gamma\ \, \textrm{and}\ \, \liminf_{r\to 0}\frac{\frm(B(x,r)\setminus E)}{\frm(B(x,r))}\ge \gamma\right\}.
\end{equation}
For an open set $\Omega\subset X$ and an $\frm$-measurable set $E\subset X$ with
$P(E,\Omega)<\infty$, we have
\begin{equation}\label{eq:strong boundary and measure}
\mathcal H((\partial^*E\setminus \Sigma_{\gamma}E)\cap\Om)=0
\end{equation}
for $\gamma \in (0,1/2]$ that only depends on $C_d$ and $C_P$,
see \cite[Theorem 5.4]{A1}.
Furthermore, for any Borel set $A\subset \Om$ we have
\begin{equation}\label{eq:def of theta}
P(E,A)=\int_{\partial^{*}E\cap A}\theta_E\,\dd\mathcal H,
\end{equation}
where
$\theta_E\colon \partial^*E\cap \Om\to [\alpha,C_d]$
with $\alpha=\alpha(C_d,C_P,\lambda)>0$, see \cite[Theorem 5.3]{A1} 
and \cite[Theorem 4.6]{AMP}.

In particular, $P(E,\Om)<\infty$ implies $\mathcal H(\partial^*E\cap \Om)<\infty$.
By Federer's characterization of sets of finite perimeter,
the converse also holds
(see \cite[Section 4.5.11]{Fed} for the original Euclidean result).

\begin{theorem}[{\cite[Theorem 1.1]{L-Fedchar}}]\label{thm:Federers characterization}
Let $\Om\subset X$ be an open set, let $E\subset X$ be an $\frm$-measurable set, and
suppose that $\mathcal H(\partial^*E\cap \Om)<\infty$. Then $P(E,\Om)<\infty$.
\end{theorem}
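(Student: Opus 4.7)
The plan is to bound $P(E,\Om)=\|D\ch_E\|(\Om)$ directly from its definition as an $L^1_{\loc}$-infimum of upper-gradient norms. I will exhibit a sequence of locally Lipschitz functions $u_k\colon\Om\to[0,1]$ with $u_k\to\ch_E$ in $L^1_{\loc}(\Om)$ and
\[
\int_\Om g_{u_k}\,\dfrm\le C\bigl(\cH(\partial^*E\cap\Om)+\eps_k\bigr),\qquad \eps_k\to 0,
\]
where $C$ depends only on the doubling and Poincar\'e data. Passing to $\liminf_{k\to\infty}$ and invoking the very definition of $\|D\ch_E\|$ then yields $P(E,\Om)\le C\,\cH(\partial^*E\cap\Om)<\infty$, which is exactly the claim.

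The building block is a discrete convolution adapted to an efficient covering of $\partial^*E\cap\Om$. Fixing $\eps_k\to 0$, the definition of $\cH$ produces a countable cover $\{B_{k,j}=B(x_{k,j},r_{k,j})\}_j$ of $\partial^*E\cap\Om$ with $r_{k,j}\le 1/k$ and $\sum_j\frm(B_{k,j})/r_{k,j}\le\cH(\partial^*E\cap\Om)+\eps_k$. A $5r$-covering reduction makes the $B_{k,j}$ pairwise disjoint while $\{5B_{k,j}\}$ still covers $\partial^*E\cap\Om$, so by doubling and the bound $r_{k,j}\le 1/k$ the neighbourhood $U_k:=\bigcup_j 5B_{k,j}$ satisfies
\[
\frm(U_k)\le\sum_j\frm(5B_{k,j})\le C\sum_j r_{k,j}\,\frac{\frm(B_{k,j})}{r_{k,j}}\le \frac{C}{k}\bigl(\cH(\partial^*E\cap\Om)+\eps_k\bigr)\xrightarrow[k\to\infty]{}0.
\]
Outside $U_k$ the Lebesgue differentiation theorem forces $\frm$-a.e.\ point of $\Om$ to lie in $I_E\cup O_E$, hence $\ch_E=\ch_{I_E}$ $\frm$-a.e.\ on $\Om\setminus U_k$. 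I would then take a bounded-overlap Lipschitz partition of unity $\{\varphi_{k,j}\}$ subordinate to $\{10B_{k,j}\}$ with $\Lip\varphi_{k,j}\lesssim r_{k,j}^{-1}$, and define $u_k$ to be the discrete convolution $\sum_j\varphi_{k,j}\,(\ch_E)_{10B_{k,j}}$ on a slight shrinking of $U_k$, patched to $\ch_{I_E}$ on $\Om\setminus U_k$ by an auxiliary Lipschitz cutoff equal to $1$ on the shrinking and $0$ outside $U_k$. Pointwise $g_{u_k}\le C\sum_j r_{k,j}^{-1}\ch_{10B_{k,j}}$, integrating to at most $C\sum_j\frm(B_{k,j})/r_{k,j}\le C(\cH(\partial^*E\cap\Om)+\eps_k)$, while $u_k\to\ch_E$ in $L^1_{\loc}(\Om)$ because $u_k=\ch_E$ $\frm$-a.e.\ outside $U_k$ and $\frm(U_k)\to 0$.

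The main obstacle is the consistency of the patching: the discrete convolution in the interior of $U_k$ must agree, up to a controllable error, with $\ch_{I_E}$ on $\partial U_k\cap\Om$, so that the pieced-together $u_k$ is genuinely locally Lipschitz with the claimed bound on its upper gradient. This is handled by splitting the covering into ``boundary'' balls (those actually meeting $\partial^*E$) and ``interior'' balls (contained in $I_E$ or $O_E$), for which $(\ch_E)_{10B_{k,j}}\in\{0,1\}$ automatically, so that the convolution matches $\ch_{I_E}$ across $\partial U_k$ up to an error vanishing with $k$; the upper-gradient contribution of the transition annulus is then absorbed into the same $\sum_j\frm(B_{k,j})/r_{k,j}$ estimate. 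Letting $k\to\infty$ completes the argument.
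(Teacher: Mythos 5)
This theorem is not proved in the paper at all: it is imported as \cite[Theorem 1.1]{L-Fedchar}, where the proof is a substantial piece of work (it runs through the boxing inequality, quasi-semicontinuity of $\BV$ functions, and fine-topology arguments). Your proposal attempts a short, self-contained proof by discrete convolution, so the real question is whether it closes the gap that makes the original proof long. It does not.

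The fatal step is the behaviour of your patched function on $\Om\setminus U_k$, where you set $u_k=\ch_{I_E}$. You correctly observe that $\Om\setminus U_k\subset I_E\cup O_E$ (since $\partial^*E\cap\Om\subset U_k$), but $I_E$ and $O_E$ are defined by density conditions and are in general neither open nor closed, nor locally separated by a positive distance: a point of $I_E\setminus U_k$ can be a limit of points of $O_E\setminus U_k$ without any point of $\partial^*E$ nearby. For instance, if $E=\bigcup_n B(q_n,\eps_n)\subset\R^2$ with $\{q_n\}$ dense and $\sum_n\eps_n$ tiny, then $I_E\supset E$ is topologically dense while $O_E$ has full measure locally, so $\ch_{I_E}$ is discontinuous at every point of $O_E\setminus U_k$; along the short quasiconvexity curves joining such a point to nearby points of $E\setminus U_k$ the function jumps by $1$, so the claimed pointwise bound $g_{u_k}\le C\sum_j r_{k,j}^{-1}\ch_{10B_{k,j}}$ is false off $\bigcup_j 10B_{k,j}$, and $u_k$ need not lie in $N^{1,1}_{\loc}(\Om)$ with any controlled upper gradient. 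Your ``boundary balls versus interior balls'' remark only addresses matching across $\partial U_k$, not this phenomenon deep inside $\Om\setminus U_k$. What the argument actually needs is the statement that any curve (or at least $1$-a.e.\ curve) running from a density-$1$ point to a density-$0$ point must pass through, or be controlled by, the covering of $\partial^*E$ --- equivalently, that between $I_E$ and $O_E$ one always finds strong boundary points in a quantitative sense (compare Proposition \ref{prop:density points} and the role of the two-sidedness/fine-topology machinery elsewhere in the paper). Establishing this is essentially the content of Federer's characterization itself, so the proposal assumes the hard part rather than proving it. The remaining ingredients (the $5r$-covering, $\frm(U_k)\to0$, the discrete-convolution gradient estimate on $U_k$, and lower semicontinuity of the total variation) are fine but do not carry the argument.
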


\begin{definition}\label{def:isotropicity}
We say that $(X,d,\frm)$ is \textbf{isotropic} if for
any pair of sets $E,F\subset X$ of finite perimeter with $F\subset E$
it holds that
\[
\theta_F(x) = \theta_E(x)\quad \textrm{for }\mathcal H\textrm{-a.e.\ }x\in \partial^*F\cap\partial^*E.
\]
\end{definition}

\begin{definition}\label{def:two sided}
	We say that the space $(X,d,\frm)$ has the \textbf{two-sidedness property} if for any two disjoint sets of finite 
	perimeter $E,F\subset X$, we have for $\mathcal H$-a.e.\ $x\in \partial^*E\cap \partial^*F$ that
	\[
	\lim_{r\to 0}\frac{\frm(B(x,r)\setminus (E\cup F))}{\frm(B(x,r))}=0.
	\]
\end{definition}

By \cite[Proposition 6.2]{AMP}
the two-sidedness property implies isotropicity.
Besides all Euclidean spaces (and, more generally, all smooth Riemannian manifolds), the class of PI spaces
with the two-sidedness property includes all \({\sf RCD}(K,N)\) spaces with \(K\in\R\) and \(N\in[1,\infty)\),
as was pointed out in \cite[Example 1.31]{BPR}. In the next result, we prove that every Carnot group
has the two-sidedness property.
\begin{proposition}
Every Carnot group \(\mathbb G\) has the two-sidedness property.
\end{proposition}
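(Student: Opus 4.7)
The plan is to prove the two-sidedness property for a Carnot group $\mathbb G$ via a blow-up argument exploiting the self-similarity of $\mathbb G$ under the homogeneous dilations $\{\delta_r\}_{r>0}$. Let $E,F\subset\mathbb G$ be two disjoint sets of finite perimeter; the goal is to show that, at $\cH$-almost every $x\in\partial^*E\cap\partial^*F$, the density $\Theta(\mathbb G\setminus(E\cup F),x)$ vanishes. As a preliminary step I would restrict to the full-$\cH$-measure subset of $\partial^*E\cap\partial^*F$ on which $x\in\Sigma_\gamma E\cap\Sigma_\gamma F$ (granted by \eqref{eq:strong boundary and measure}) and on which the perimeter densities $\theta_E(x)$, $\theta_F(x)$ exist and are positive.

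Next, combining left-translation by $x^{-1}$ with the dilations $\delta_{1/r}$, I would introduce the rescaled sets $E_r:=\delta_{1/r}(x^{-1}E)$ and $F_r:=\delta_{1/r}(x^{-1}F)$. These remain disjoint and, thanks to the scaling laws $\frm(\delta_r A)=r^Q\frm(A)$ and $P(\delta_r A,\delta_r\Omega)=r^{Q-1}P(A,\Omega)$ (where $Q$ is the homogeneous dimension of $\mathbb G$), carry locally uniformly bounded rescaled perimeters. By the standard $L^1_{\loc}$ compactness of sequences of sets of finite perimeter in the PI space $\mathbb G$, along a subsequence $r_k\to 0$ one has $E_{r_k}\to\tilde E$ and $F_{r_k}\to\tilde F$ in $L^1_{\loc}(\mathbb G)$, where $\tilde E,\tilde F$ are disjoint sets of locally finite perimeter. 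The densities at $x$ of $E$ and $F$ coincide with the densities at the identity of $\tilde E$ and $\tilde F$, so the problem is reduced to identifying these pointwise densities.

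The crux of the argument consists in establishing that, at $\cH$-a.e.\ $x\in\partial^*E$ (and analogously $\partial^*F$), the density $\Theta(E,x)$ equals exactly $\tfrac{1}{2}$; I expect this to be the main obstacle. In Heisenberg-type (step-2) Carnot groups this follows from the De Giorgi-type rectifiability theorem of Franchi--Serapioni--Serra Cassano, which identifies the blow-up with a vertical half-space of density $\tfrac{1}{2}$. For Carnot groups of higher step, where the full rectifiability is not available, the weaker density-$\tfrac{1}{2}$ property at $\cH$-a.e.\ reduced-boundary point can nevertheless be deduced by exploiting the isotropy of the perimeter density $\theta$ in $\mathbb G$, the asymptotic doubling of $P(E,\cdot)$, and covering-type arguments adapted to the stratified Lie group structure. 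Once $\Theta(E,x)=\Theta(F,x)=\tfrac{1}{2}$ is secured for $\cH$-a.e.\ $x\in\partial^*E\cap\partial^*F$, the disjointness of $E$ and $F$ forces $\Theta(E\cup F,x)=1$, whence $\Theta(\mathbb G\setminus(E\cup F),x)=0$, which is precisely the two-sidedness conclusion.
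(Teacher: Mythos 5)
Your closing implication is fine: if $\Theta(E,x)$ and $\Theta(F,x)$ both exist and equal $\tfrac12$, then disjointness gives $\Theta(E\cup F,x)=1$ and hence $\Theta(\mathbb G\setminus(E\cup F),x)=0$. The genuine gap is exactly the step you flag as the main obstacle: the density-$\tfrac12$ property at $\mathcal H$-a.e.\ point of the essential boundary is \emph{not} available in an arbitrary Carnot group. It follows from the Franchi--Serapioni--Serra Cassano rectifiability theorem in step $2$ (and in a few special higher-step classes), but in general the blow-up at a.e.\ reduced-boundary point is only known to have constant horizontal normal, and it is open whether such sets are vertical halfspaces or even have volume density $\tfrac12$ at the base point; this is tied to the open rectifiability problem. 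The substitutes you propose (isotropy of $\theta$, asymptotic doubling of the perimeter measure, covering arguments) only give two-sided bounds $\gamma\le\Theta_*\le\Theta^*\le 1-\gamma$ as in \eqref{eq:strong boundary and measure}; none of them pins the density to the exact value $\tfrac12$. So, as written, the argument does not go through for Carnot groups of step $\ge 3$, and your blow-up compactness step does no work until that identification is supplied.

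The paper's proof sidesteps densities entirely and instead uses the locality of the perimeter in Carnot groups due to Ambrosio--Scienza: for $\mathcal H$-a.e.\ point in the intersection of two reduced boundaries the horizontal normals agree up to sign. For disjoint $E,F$ this yields $\nu_E=-\nu_F$ on $\mathcal F E\cap\mathcal F F$ and $\nu_{E\cup F}=\nu_E$, $\nu_{E\cup F}=\nu_F$ on the respective intersections with $\mathcal F(E\cup F)$, whence $\nu_E=-\nu_E$ on the triple intersection and therefore $\mathcal H(\partial^*E\cap\partial^*F\cap\partial^*(E\cup F))=0$. Since $\Theta^*(E\cup F,x)>0$ at every $x\in\partial^*E$, any point of $\partial^*E\cap\partial^*F$ lying outside $\partial^*(E\cup F)$ automatically satisfies $\Theta(\mathbb G\setminus(E\cup F),x)=0$. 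This normal-matching result holds in every Carnot group regardless of step, which is precisely the input your approach is missing; if you want to keep a blow-up flavour, you would need such a sign relation between the blow-ups of $E$ and $F$, and that again comes from locality rather than from an exact density computation.
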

\begin{proof}
The notation we will use in this proof is borrowed from \cite{AS}. Fix two disjoint sets of finite perimeter \(E,F\subset\mathbb G\).
Applying \cite[Corollary 2.5]{AS}, we obtain that
\[\begin{split}
\nu_E(x)=\nu_{\mathbb G\setminus F}(x)=-\nu_F(x)&\quad\text{ for }\mathcal H\text{-a.e.\ }x\in\mathcal F E\cap\mathcal F(\mathbb G\setminus F)=\mathcal F E\cap\mathcal F F,\\
\nu_{E\cup F}(x)=\nu_E(x)&\quad\text{ for }\mathcal H\text{-a.e.\ }x\in\mathcal F(E\cup F)\cap\mathcal F E,\\
\nu_{E\cup F}(x)=\nu_F(x)&\quad\text{ for }\mathcal H\text{-a.e.\ }x\in\mathcal F(E\cup F)\cap\mathcal F F,
\end{split}\]
whence it follows that
\[
\nu_E(x)=-\nu_F(x)=-\nu_{E\cup F}(x)=-\nu_E(x)\quad\text{ for }\mathcal H\text{-a.e.\ }x\in\mathcal F E\cap\mathcal F F\cap\mathcal F(E\cup F).
\]
Since \(|\nu_E(x)|=1\) for every \(x\in\mathcal F E\) and \(\mathcal F E\cap\mathcal F F\cap\mathcal F(E\cup F)\) is \(\mathcal H\)-a.e.\ equivalent
to \(\partial^*E\cap\partial^*F\cap\partial^*(E\cup F)\), we conclude that \(\mathcal H(\partial^*E\cap\partial^*F\cap\partial^*(E\cup F))=0\), which
shows that \(\mathbb G\) has the two-sidedness property.
\end{proof}

It would be interesting to understand whether all \({\sf CD}(K,N)\) spaces (or even \({\sf MCP}(K,N)\) spaces),
with \(K\in\R\) and \(N\in[1,\infty)\), have the two-sidedness property.
\subsection{Capacities}

The \textbf{$1$-capacity} of a set $A\subset X$ is defined by
\[
\capa_1(A):=\inf \Vert u\Vert_{N^{1,1}(X)},
\]
where the infimum is taken over all functions $u\in N^{1,1}(X)$ satisfying
$u\ge 1$ in $A$.
The \textbf{variational $1$-capacity} of a set $A\subset D$
with respect to a set $D\subset X$ is defined by
\[
\rcapa_1(A,D):=\inf \int_X g_u \dfrm,
\]
where the infimum is taken over functions $u\in N^{1,1}(X)$ satisfying
$u=0$ in $X\setminus D$ and
$u\ge 1$ in $A$, and $g_u$ is the minimal $1$-weak upper gradient of $u$ (in $X$).
By truncation, we see that we can assume $0\le u\le 1$ on $X$.
For basic properties satisfied by capacities, such as monotonicity and countable subadditivity, see e.g.\ \cite{BB}.

A fact that is often useful for us is that $\rcapa_1$ is an outer capacity, meaning
that for $A\Subset \Om$ with $\Om\subset X$ open, we have
\begin{equation}\label{eq:outer capacity}
\rcapa_1(A,\Om)=\inf_{\substack{A\subset V\subset \Om\\ V\textrm{ is open } }}\rcapa_1(V,\Om),
\end{equation}
see \cite[Proposition 6.19(vii)]{BB}.

By \cite[Theorem 4.3, Theorem 5.1]{HaKi} we know that for $A\subset X$,
\begin{equation}\label{eq:null sets of Hausdorff measure and capacity}
	\capa_1(A)=0\quad \textrm{if and only if}\quad \mathcal H(A)=0.
\end{equation}
For any set $A\subset B(x,r)$, we have
\begin{equation}\label{eq:cap and H measure}
\rcapa_1(A,2B)\le C_d \mathcal H(A);
\end{equation}
this can be proved by considering cutoff functions corresponding
to coverings $\{B_j\}_j$ of $A$; see e.g.\ the proof of \cite[Lemma 3.3]{KKST}.

By \cite[Lemma 11.22]{BB}, for $A\subset B=B(x,r)$ and $t>2$ with $tr<\tfrac 14 \diam X$, we have
\begin{equation}\label{eq:capacities}
	\rcapa_1(A,2B)\le C'(1+t)\rcapa_1(A,tB)
\end{equation}
for a finite constant $C'$ depending only on the doubling and Poincar\'e constants.

By \cite[Proposition 6.16]{BB} we know that for $x\in X$ and $0<r<\tfrac 18 \diam X$, we have
\begin{equation}\label{eq:cap estimate}
\frac{\frm(B(x,r))}{C''r} 
\le \rcapa_1(B(x,r),B(x,2r))
\le C_d\frac{\frm(B(x,r))}{r} 
\end{equation}
for some finite constant $C''$  depending only on the doubling and Poincar\'e constants.
By \cite[Proposition 6.16]{BB}, in this case for $A\subset B(x,r)$ we also have
\begin{equation}\label{eq:Cap and cap estimate}
\rcapa_1(A,B(x,2r))\le 2(1+r^{-1})\capa_1(A).
\end{equation}

We also define a $\BV$-capacity of $A\subset X$ by
\[
\capa_{\BV}(A):=\inf (\Vert u\Vert_{L^1(X)}+\Vert Du\Vert(X)),
\]
where the infimum is taken over functions $u\in \BV(X)$ with $u\ge 1$ in a neighborhood of $A$.
By \cite[Theorem 4.3]{HaKi}, we have
\begin{equation}\label{eq:cap and BV cap}
\capa_1(A)\le C_r'\capa_{\BV}(A)
\end{equation}
for every $A\subset X$ and for a finite constant
$C'_{\textrm{r}}=C'_{\textrm{r}}(C_d,C_P,\lambda )\ge 1$.

We also define a \textbf{variational $\BV$-capacity} for any $A\subset\Om$, with
$\Om\subset X$ open, by
\[
\rcapa^{\vee}_{\BV}(A,\Om):=\inf \Vert Du\Vert(X),
\]
where the infimum is taken over functions $u\in \BV(X)$ such that
$u^{\wedge}=u^{\vee}= 0$ $\mathcal H$-a.e.\ in $X\setminus \Om$ and
$u^{\vee}\ge 1$ $\mathcal H$-a.e.\ in $A$.
By \cite[Theorem 5.7]{L-SS} we know that
\begin{equation}\label{eq:variational one and BV capacity}
	\rcapa_{1}(A,\Om)\le C_{\textrm{r}}\rcapa^{\vee}_{\BV}(A,\Om)
\end{equation}
for a finite constant $C_{\textrm{r}}=C_{\textrm{r}}(C_d,C_P,\lambda )\ge 1$.

\subsection{The \texorpdfstring{$1$}{1}-fine topology}
We begin with the following definition. 
\begin{definition}\label{def:1 fine topology}
	We say that $A\subset X$ is \textbf{$1$-thin} at the point $x\in X$ if
	\[
	\lim_{r\to 0}\frac{\rcapa_1(A\cap B(x,r),B(x,2r))}{\rcapa_1(B(x,r),B(x,2r))}=0.
	\]
	We also say that a set $U\subset X$ is \textbf{$1$-finely open} if $X\setminus U$ is $1$-thin at every $x\in U$. Then we define the \textbf{$1$-fine topology} as the collection of $1$-finely open sets on $X$.
	
	We denote the \textbf{$1$-fine interior} of a set $H\subset X$, i.e.\ the largest $1$-finely open set contained in $H$, by $\fint H$. We denote the \textbf{$1$-fine closure} of $H$,
	i.e.\ the smallest $1$-finely closed set containing $H$, by $\overline{H}^1$. The $1$-fine boundary of $H$
	is $\partial^1 H:=\overline{H}^1\setminus \fint H$.
    The $1$-base $b_1 H$ is the set of points where $H$ is not $1$-thin.
\end{definition}

See \cite[Section 4]{L-FC} for discussion on this definition, and for a proof of the fact that the $1$-fine topology is indeed a topology.
Now we collect some facts on the $1$-fine topology, and its connection with sets of finite perimeter.

Let $A\subset X$.
By \cite[Lemma 3.1]{L-Fed}, $1$-thinness implies zero measure density, i.e.\
\begin{equation}\label{eq:thinness and measure thinness}
	\textrm{if }A\textrm{ is 1-thin at }x,\textrm{ then }\Theta(A,x)=0.
\end{equation}
By \cite[Corollary 3.5]{L-Fed}, we have
\begin{equation}\label{eq:fine closure}
	\overline{A}^1=A\cup b_1 A,
\end{equation}
so that also
\begin{equation}\label{eq:fine int}
	\fint{A}=A\setminus b_1 (X\setminus A),
\end{equation}
and also
\begin{equation}\label{eq:fine bdry char}
	\partial^ 1 A = (A \cap b_1(X \setminus A)) \cup ((X \setminus A) \cap b_1 A).
\end{equation}

By \cite[Proposition 3.3]{L-Fed}, for any $A\subset D\subset X$ we have
\begin{equation}\label{eq:cap of fine closure}
\capa_1(\overline{A}^1)=\capa_1(A)
\quad\textrm{and}\quad
\rcapa_1(A,D) =\rcapa_1(\overline{A}^1\cap D,D).
\end{equation}

\begin{definition}\label{def:quasiopen}
A set $U\subset X$ is \textbf{$1$-quasiopen} if for every $\eps>0$ there exists an open set $G\subset X$
such that $U\cup G$ is open and $\capa_1(G)<\eps$.
\end{definition}

By \cite[Corollary 6.12]{L-CK}
we know that for an arbitrary set $U\subset X$,
\begin{equation}\label{eq:finely and quasiopen}
U\textrm{ is }1\textrm{-quasiopen }\ \Longleftrightarrow\ \ U=V\cup N\
\textrm{ where }V\textrm{ is 1-finely open and }\mathcal H(N)=0.
\end{equation}

\begin{lemma}[{\cite[Lemma 4.4]{L-WC}}]\label{lem:open thin set}
If $A\subset X$ is $1$-thin at $x\in X\setminus A$, then there exists an
open set $W\supset A$ that is also $1$-thin at $x$.
\end{lemma}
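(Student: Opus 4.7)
The plan is to enlarge $A$ one annular shell at a time around $x$, using the outer capacity property \eqref{eq:outer capacity} to control each piece, and then to verify that the resulting union is still $1$-thin at $x$. First, since $1$-thinness at $x$ depends only on the behavior of $W$ near $x$, I would reduce to the case $A\subset B(x,r_0)$ for some small $r_0>0$; the points of $A$ outside $\overline{B}(x,r_0/2)$ can be covered by the open set $X\setminus\overline{B}(x,r_0/2)$, which is automatically $1$-thin at $x$, and the final open cover is obtained by taking the union with this auxiliary set.

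Second, fix a rapidly decreasing sequence of radii $r_n\downarrow 0$ (with the rate to be adapted to the rate of thinness of $A$) and decompose
\[
A=\bigcup_{n}A_n,\qquad A_n:=A\cap\bigl(\overline{B}(x,r_n)\setminus B(x,r_{n+1})\bigr),
\]
which is a genuine covering since $x\notin A$. Each $A_n$ is compactly contained in the open annulus $B(x,2r_n)\setminus\overline{B}(x,r_{n+2})$, so by \eqref{eq:outer capacity} one can pick open sets $V_n$ with $A_n\subset V_n\Subset B(x,2r_n)\setminus\overline{B}(x,r_{n+2})$ and
\[
\rcapa_1(V_n,B(x,2r_n))\le\rcapa_1(A_n,B(x,2r_n))+\delta_n
\]
for prescribed small tolerances $\delta_n>0$. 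Setting $W:=\bigcup_n V_n$ gives an open set containing $A$, and the enforced inclusion $V_n\subset X\setminus\overline{B}(x,r_{n+2})$ provides a definite separation of $V_n$ from $x$.

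Third, to verify that $W$ is $1$-thin at $x$, fix $r>0$ small and let $k$ be the integer with $r\in(r_{k+1},r_k]$. The separation just noted forces $V_n\cap B(x,r)=\emptyset$ for every $n\le k-2$, so countable subadditivity of $\rcapa_1(\cdot,B(x,2r))$ yields
\[
\rcapa_1(W\cap B(x,r),B(x,2r))\le\sum_{n\ge k-1}\rcapa_1(V_n\cap B(x,2r),B(x,2r)).
\]
For $n\ge k+1$ the inclusion $B(x,2r_n)\subset B(x,2r)$ together with monotonicity of $\rcapa_1$ in its reference domain gives $\rcapa_1(V_n,B(x,2r))\le\rcapa_1(V_n,B(x,2r_n))\le\eta_n\,\rcapa_1(B(x,r_n),B(x,2r_n))+\delta_n$, where the ratio $\eta_n:=\rcapa_1(A\cap B(x,r_n),B(x,2r_n))/\rcapa_1(B(x,r_n),B(x,2r_n))$ tends to $0$ by the thinness hypothesis; the finitely many remaining terms $n=k-1,k$ are controlled using \eqref{eq:capacities} and \eqref{eq:cap estimate}.

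The main obstacle is the summability of the resulting series when divided by $\rcapa_1(B(x,r),B(x,2r))$. In a general PI space, doubling alone does not force any power-law decay of $\frm(B(x,r))$ in $r$, so the ratio $\rcapa_1(B(x,r_n),B(x,2r_n))/\rcapa_1(B(x,r),B(x,2r))$ can be as large as $r/r_n$, which grows unboundedly as $n$ increases. Handling this requires a careful simultaneous choice of the radii $r_n$ (taken small enough that $\eta_n$ decays sufficiently fast) and of the tolerances $\delta_n$, so that the tail of the series tends to $0$ as $k\to\infty$; this delicate bookkeeping, balancing the decay of $\eta_n$ against the growth of $r/r_n$, is the technical core of the argument and the reason the statement is nontrivial beyond the Ahlfors regular setting.
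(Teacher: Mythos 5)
Your overall strategy — reduce to a neighborhood of $x$, split $A$ into annular shells $A_n$, pick open sets $V_n\supset A_n$ with capacity close to $\rcapa_1(A_n,\cdot)$ via outer regularity \eqref{eq:outer capacity}, and take $W=\bigcup_n V_n$ — is natural, and the first three steps are carried out correctly. The problem is that you stop exactly at the step that is the whole content of the lemma. You announce the summability difficulty (that $\rcapa_1(B(x,r_n),B(x,2r_n))/\rcapa_1(B(x,r),B(x,2r))$ can be as large as $r/r_n$) and then assert it can be fixed by a ``careful simultaneous choice of the radii $r_n$ and of the tolerances $\delta_n$'', but you do not make that choice. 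Without it, there is no proof.

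More importantly, I do not believe the union-over-annuli scheme closes on its own, because the two requirements you must balance pull in opposite directions. To make the $n$-th summand small you need $\eta_n=\theta(r_n)$ small, which forces $r_n$ small; but $1$-thinness gives only $\theta(s)\to 0$ with no rate, so to get, say, $\theta(r_n)\le 4^{-n}$ you may be forced to take $r_n$ decaying like $e^{-4^n}$. That in turn makes $r_k/r_n$ (and hence the ratio you isolated) grow much faster than the gain from $\eta_n$, and the tail $\sum_{n>k}\eta_n\,\rcapa_1(B(x,r_n),B(x,2r_n))/\rcapa_1(B(x,r),B(x,2r))$ is not controlled. The $\delta_n$'s are no help: they only bound the \emph{excess} $\rcapa_1(V_n,\cdot)-\rcapa_1(A_n,\cdot)$, and the terms $\rcapa_1(A_n,B(x,2r_n))$ themselves are already an obstacle. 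So what you flag as ``delicate bookkeeping'' is in fact a genuine gap, and I don't see how to fill it within your framework without an additional quasi-additivity result for $\rcapa_1$ over separated annuli, which is not among the tools at hand.

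There is a cleaner route that avoids any infinite summation. Fix radii $r_j\downarrow 0$ with, say, $r_{j+1}=r_j/2$, and for each $j$ use outer regularity to pick an \emph{open} set $U_j$ with
\[
A\cap\overline{B}(x,r_j)\subset U_j\subset B(x,2r_j),
\qquad
\rcapa_1(U_j,B(x,2r_j))\le\rcapa_1\bigl(A\cap\overline{B}(x,r_j),B(x,2r_j)\bigr)+\delta_j,
\]
with $\delta_j\to 0$, and (since $x\notin A$) $x\notin U_j$. Set
\[
W:=\bigcap_{j}\Bigl(U_j\cup\bigl(X\setminus\overline{B}(x,r_j)\bigr)\Bigr).
\]
Then $A\subset W$ (for $a\in A$, each factor contains $a$), $x\notin W$, and $W$ is open because at any $y\ne x$ all but finitely many factors contain a fixed ball around $y$. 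The decisive point is that for $r_{k+1}<r\le r_k$ one has $W\cap B(x,r)\subset U_k$, since $B(x,r)$ does not meet $X\setminus\overline{B}(x,r_k)$; so the capacity of $W\cap B(x,r)$ is controlled by that of a \emph{single} $U_k$ at the comparable scale $r_k\le 2r$, and a one-line application of \eqref{eq:capacities} and \eqref{eq:cap estimate} gives $\rcapa_1(W\cap B(x,r),B(x,2r))\lesssim(\theta(r_k)+\delta_k)\,\rcapa_1(B(x,r),B(x,2r))\to 0$. No series ever appears, and one never needs a quantitative rate on the thinness of $A$. I would encourage you to replace your union construction with this nested one; you will see that the ``main obstacle'' you identified simply disappears.
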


\begin{proposition}[{\cite[Proposition 4.5]{L-WC}}]\label{eq:capa x positive}
Let $x\in X$ with $\capa_1(\{x\})>0$. Then $\{x\}$ is not $1$-thin at $x$.
\end{proposition}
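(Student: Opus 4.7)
I will argue by contradiction: suppose \(\{x\}\) is \(1\)-thin at \(x\) while \(\capa_1(\{x\})>0\). By \eqref{eq:thinness and measure thinness}, this yields \(\Theta(\{x\},x)=0\). Since \(\frm(B(x,r))\downarrow\frm(\{x\})\) as \(r\to 0\), the case \(\frm(\{x\})>0\) would give \(\Theta(\{x\},x)=1\); so necessarily \(\frm(\{x\})=0\).

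The main step is a Sobolev-type lower bound on the numerator of the thinness ratio. Every admissible \(u\in N^{1,1}(X)\) for \(\rcapa_1(\{x\},B(x,2r))\) satisfies \(u=0\) outside \(B(x,2r)\), so \eqref{eq:isop inequality with zero boundary values} gives \(\|u\|_{L^1(X)}\le 2C_S r\|g_u\|_{L^1(X)}\). This yields \(\|u\|_{N^{1,1}(X)}\le(1+2C_S r)\|g_u\|_{L^1(X)}\), and taking the infimum over admissible \(u\),
\[
\capa_1(\{x\})\le(1+2C_S r)\,\rcapa_1(\{x\},B(x,2r)).
\]
Combining with the \(1\)-thinness assumption (for arbitrary \(\eps>0\) one has \(\rcapa_1(\{x\},B(x,2r))\le\eps\,\rcapa_1(B(x,r),B(x,2r))\) for all sufficiently small \(r\)) and the upper bound \(\rcapa_1(B(x,r),B(x,2r))\le C_d\,\frm(B(x,r))/r\) from \eqref{eq:cap estimate}, we obtain
\[
\capa_1(\{x\})\le(1+2C_S r)\,\eps\, C_d\,\frac{\frm(B(x,r))}{r}.
\]

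To close the argument I would exhibit a sequence \(r_n\to 0\) along which \(\frm(B(x,r_n))/r_n\) stays bounded; letting \(\eps\to 0\) along this sequence then forces \(\capa_1(\{x\})=0\), the desired contradiction. The existence of such a sequence is equivalent to \(\liminf_{r\to 0}\frm(B(x,r))/r<\infty\), which by the definition of \(\mathcal H\) via single-ball coverings and doubling is comparable to \(\mathcal H(\{x\})<\infty\). Note that \eqref{eq:null sets of Hausdorff measure and capacity} together with the hypothesis \(\capa_1(\{x\})>0\) guarantees \(\mathcal H(\{x\})>0\), but not finiteness.

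\textbf{Main obstacle.} The principal difficulty is that \(\mathcal H(\{x\})=\infty\) can occur even when \(\frm(\{x\})=0\), in PI spaces whose measure concentrates at \(x\) (for instance, certain Muckenhoupt-weighted Euclidean spaces with integrable singular weight). In this regime the Sobolev lower bound \(\rcapa_1(\{x\},B(x,2r))\ge\capa_1(\{x\})/(1+2C_S r)\) is too crude: both \(\rcapa_1(\{x\},B(x,2r))\) and \(\rcapa_1(B(x,r),B(x,2r))\) blow up at essentially the same rate, so bounding the numerator only by a constant cannot control the ratio. To treat this case one has to establish the sharper estimate \(\rcapa_1(\{x\},B(x,2r))\gtrsim\frm(B(x,r))/r\), matching the order of the denominator given by \eqref{eq:cap estimate}. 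I would expect such an estimate to come from the curve-modulus characterization of \(\rcapa_1\) together with the \(1\)-Poincar\'e inequality, which forces any admissible test function to accumulate a non-trivial upper-gradient mass along curves passing through \(x\); this is where the main technical work lies.
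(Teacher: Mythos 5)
This result is quoted in the paper from \cite[Proposition~4.5]{L-WC} without a proof, so there is no in-paper argument to compare against; I can only assess your sketch on its own terms.

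Your opening moves are sound: by \eqref{eq:thinness and measure thinness}, $1$-thinness of $\{x\}$ at $x$ gives $\frm(\{x\})=0$, and the Sobolev inequality \eqref{eq:isop inequality with zero boundary values} applied to admissible functions correctly yields $\capa_1(\{x\})\le(1+2C_S r)\,\rcapa_1(\{x\},B(x,2r))$; together with \eqref{eq:cap estimate} this closes the argument as soon as $\liminf_{r\to 0}\frm(B(x,r))/r<\infty$. The obstruction you flag is, however, genuine. Covering $\{x\}$ by a single ball shows (using doubling to handle off-centre balls) that $\mathcal H(\{x\})$ is comparable to $\liminf_{r\to 0}\frm(B(x,r))/r$, and \eqref{eq:null sets of Hausdorff measure and capacity} only gives $\mathcal H(\{x\})>0$, not its finiteness. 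Indeed, $\R^n$ with the Muckenhoupt $A_1$ weight $w(y)=|y|^{-\alpha}$, $n-1<\alpha<n$, is a PI space with $\frm(B(0,r))/r\sim r^{n-1-\alpha}\to\infty$, while $\capa_1(\{0\})$ is positive and finite; there your constant lower bound on $\rcapa_1(\{x\},B(x,2r))$ cannot dominate the blowing-up denominator $\rcapa_1(B(x,r),B(x,2r))\sim\frm(B(x,r))/r$, and no contradiction is reached. You correctly identify that what is missing is the scale-matching bound $\rcapa_1(\{x\},B(x,2r))\gtrsim\frm(B(x,r))/r$, and this is precisely the nontrivial content of \cite[Proposition~4.5]{L-WC}. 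One caution on your proposed route: for $p=1$ the family of curves through a single fixed point typically has zero $1$-modulus, so the curve-modulus viewpoint alone is delicate; the dual BV/perimeter viewpoint --- combining \eqref{eq:variational one and BV capacity}, the coarea formula \eqref{eq:coarea}, the Lebesgue-point fact that $u^\vee(x)\ge1$ for any admissible $u$ (available since $\capa_1(\{x\})>0$), and the relative isoperimetric inequality \eqref{eq:relative isoperimetric inequality} at a suitably chosen intermediate scale where the level sets of $u$ fill roughly half a ball --- is the more promising path, but making that scale choice uniform is exactly where the real work lies. As written, the argument has a genuine gap.
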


Recall that $I_E$ is the measure-theoretic interior of $E$, i.e.\ the set where
$\Theta(X\setminus E,x)=0$. From \eqref{eq:thinness and measure thinness} and
\eqref{eq:fine bdry char}, it follows that always $\partial^*E\subset \partial^1 I_E$.

\begin{theorem}[{\cite[Theorem 1.1]{L-Fed}}]\label{thm:Federer fine}
	For an open set $\Om\subset X$ and an $\frm$-measurable set $E\subset X$, we have
	$P(E,\Om)<\infty$ if and only if $\mathcal H(\partial^1 I_E\cap \Om)<\infty$.
	Furthermore, then
	\begin{equation}\label{eq:fine boundary and measure}
	\mathcal H((\partial^1 I_E \setminus\partial^*E) \cap \Om) = 0.
	\end{equation}
\end{theorem}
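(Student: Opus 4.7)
The reverse implication is immediate: if $\mathcal H(\partial^1 I_E\cap\Om)<\infty$, then $\mathcal H(\partial^*E\cap\Om)<\infty$ by the inclusion $\partial^*E\subset\partial^1 I_E$ noted above, and Theorem~\ref{thm:Federers characterization} yields $P(E,\Om)<\infty$. For the forward direction, assuming $P(E,\Om)<\infty$, my strategy is to prove the stronger statement \eqref{eq:fine boundary and measure}; combined with $\partial^*E\subset\partial^1 I_E$ and $\mathcal H(\partial^*E\cap\Om)<\infty$ (which follows from \eqref{eq:def of theta} and the lower bound $\theta_E\ge\alpha>0$), this also yields $\mathcal H(\partial^1 I_E\cap\Om)<\infty$.

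To prove \eqref{eq:fine boundary and measure}, observe first that any $x\in\Om\setminus\partial^*E$ lies in $I_E\cup O_E$, since the failure of membership in $\partial^*E$ forces either $\Theta^*(E,x)=0$ or $\Theta^*(X\setminus E,x)=0$. By the symmetry $E\leftrightarrow X\setminus E$ (which exchanges $I_E\leftrightarrow O_E$ and preserves $\partial^*E$), together with the inclusion $I_E\subset X\setminus I_{X\setminus E}$ and monotonicity of $1$-thinness, the proof reduces to the following \emph{main claim}: for any $F\subset X$ with $P(F,\Om)<\infty$, the set $X\setminus I_F$ is $1$-thin at $\mathcal H$-a.e.\ $x\in (I_F\cap\Om)\setminus\partial^*F$. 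Applied to $F=E$ this gives $x\in\fint I_E$, hence $x\notin\partial^1 I_E$, for $\mathcal H$-a.e.\ $x\in I_E\cap\Om\setminus\partial^*E$; applied to $F=X\setminus E$ with the monotonicity remark, it shows $I_E$ is $1$-thin at $x$, hence $x\notin\overline{I_E}^1\supset\partial^1 I_E$, for $\mathcal H$-a.e.\ $x\in O_E\cap\Om\setminus\partial^*E$.

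To prove the main claim, fix such an $x$ and small $r>0$, and take a Lipschitz cutoff $\eta$ with $\eta\equiv 1$ on $B(x,r)$, $\supp\eta\subset B(x,3r/2)$, and $g_\eta\le 2/r$. Setting $u:=\eta\,\ch_{X\setminus F}\in\BV(X)$, the $\BV$ Leibniz rule gives
\[
\|Du\|(X)\le \frac{2}{r}\,\frm(B(x,2r)\setminus F)+P(F,B(x,2r)).
\]
Moreover $u^\vee\ge 1$ on $(X\setminus I_F)\cap B(x,r)$ (since $\eta\equiv 1$ there and $(\ch_{X\setminus F})^\vee(y)=1$ precisely when $\Theta^*(X\setminus F,y)>0$, which characterizes $X\setminus I_F$), while $u^\wedge=u^\vee=0$ outside $B(x,3r/2)\subset B(x,2r)$; hence $u$ is admissible for $\rcapa^{\vee}_{\BV}((X\setminus I_F)\cap B(x,r),B(x,2r))$. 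Combining \eqref{eq:variational one and BV capacity} with the lower bound in \eqref{eq:cap estimate} for the denominator yields
\[
\frac{\rcapa_1((X\setminus I_F)\cap B(x,r),B(x,2r))}{\rcapa_1(B(x,r),B(x,2r))}\le C\left(\frac{\frm(B(x,2r)\setminus F)}{\frm(B(x,r))}+\frac{rP(F,B(x,2r))}{\frm(B(x,r))}\right).
\]

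The first term tends to zero at every $x\in I_F$ by doubling and $\Theta(X\setminus F,x)=0$. The second term tends to zero at $\mathcal H$-a.e.\ $x\notin\partial^*F$ by a codimension-one density estimate for the finite Radon measure $P(F,\cdot)$, which is dominated by $C_d\,\mathcal H\mres\partial^*F$ via \eqref{eq:def of theta}. \textbf{The main obstacle} lies precisely in this last step: a Lebesgue-type differentiation statement saying that the codimension-one density $r\,\mu(B(x,2r))/\frm(B(x,r))$ of a finite Radon measure $\mu$ dominated by $\mathcal H\mres A$ vanishes $\mathcal H$-a.e.\ outside $A$. In the PI setting this should follow from a Vitali-type covering argument combined with \eqref{eq:null sets of Hausdorff measure and capacity} identifying null sets for capacity and $\mathcal H$, but arranging the covering so as to genuinely control the exceptional set is the main technical content of the proof.
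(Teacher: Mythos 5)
First, a point of order: the paper does not prove this theorem — it is imported verbatim from \cite[Theorem 1.1]{L-Fed} — so there is no internal proof to compare against. Judged on its own merits, your reconstruction follows what is essentially the route of that reference: reduce \eqref{eq:fine boundary and measure} to showing that $X\setminus I_F$ is $1$-thin at $\mathcal H$-a.e.\ point of $I_F\cap\Om$ (applied to $F=E$ and $F=X\setminus E$, using $O_E=I_{X\setminus E}$, $P(X\setminus E,\Om)=P(E,\Om)$ and monotonicity of thinness), and establish that via the cutoff/Leibniz estimate, the comparison $\rcapa_1\le C_{\mathrm r}\rcapa^{\vee}_{\BV}$ from \eqref{eq:variational one and BV capacity}, and the lower bound in \eqref{eq:cap estimate}. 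All of these reductions check out, including the admissibility of $u=\eta\,\ch_{X\setminus F}$ for the variational $\BV$-capacity and the deduction of the ``if and only if'' from \eqref{eq:fine boundary and measure}, $\partial^*E\subset\partial^1 I_E$, \eqref{eq:def of theta}, and Theorem~\ref{thm:Federers characterization}.

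The step you flag as the main obstacle is not actually one: it is the standard codimension-one density lemma, and the $5B$-covering argument you name does close it with no hidden difficulty. Concretely, let $\mu:=P(F,\cdot)$ restricted to $\Om$, a finite Borel regular (hence, $X$ being proper, Radon) measure with $\mu(\Om\setminus\partial^*F)=0$. Fix $t>0$ and let $S_t$ be the set of $x\in\Om\setminus\partial^*F$ at which $\limsup_{r\to0}r\mu(B(x,2r))/\frm(B(x,r))>t$. By outer regularity choose an open $U\supset S_t$ with $\mu(U)<\eps$. For each $x\in S_t$ and $\delta>0$ pick $r_x<\delta$ with $B(x,10r_x)\subset U$ and $\frm(B(x,r_x))/r_x<\mu(B(x,2r_x))/t$; the $5B$-covering lemma applied to $\{B(x,2r_x)\}$ gives a countable pairwise disjoint subfamily $\{B(x_i,2r_i)\}$ with $S_t\subset\bigcup_i B(x_i,10r_i)$, whence $\mathcal H_{10\delta}(S_t)\le\sum_i\frm(B(x_i,10r_i))/(10r_i)\le C_d^4t^{-1}\sum_i\mu(B(x_i,2r_i))\le C_d^4\eps/t$. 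Letting $\delta\to0$ and then $\eps\to0$ gives $\mathcal H(S_t)=0$. Note that neither \eqref{eq:null sets of Hausdorff measure and capacity} nor the domination $\mu\le C_d\,\mathcal H\mres\partial^*F$ is needed here — only $\mu(\Om\setminus\partial^*F)=0$. Two further minor remarks: the $\BV$ Leibniz inequality $\Vert D(\eta v)\Vert\le g_\eta|v|\,\frm+|\eta|\,\Vert Dv\Vert$ is used but not stated anywhere in the paper (it is standard in this setting and should be cited), and \eqref{eq:cap estimate} requires $r<\tfrac18\diam X$, which is harmless since only small $r$ matter. With the density lemma written out, your proof is complete and correct.
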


We also have the following.
\begin{proposition}[{\cite[Corollary 4.4]{L-Fed}}]\label{prop:IE and OE}
	If  $\Om \subset X$ is open
	and $E\subset X$ is such that
	$P(E,\Om)<\infty$, then each of the sets $I_E \cap \Om$
	and $O_E \cap  \Om$ is the union of a $1$-finely open set
	and an $\mathcal H$-negligible set.
\end{proposition}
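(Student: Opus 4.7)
My plan is to decompose $I_E\cap\Omega$ as $V\cup N$ with $V$ a $1$-finely open set and $N$ an $\mathcal H$-negligible set, and then to obtain the corresponding decomposition for $O_E\cap\Omega$ by applying the same argument to the complement $X\setminus E$, using that $O_E=I_{X\setminus E}$ and $P(X\setminus E,\Omega)=P(E,\Omega)$.

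The candidate $1$-finely open set for the $I_E$ case is $V:=\fint I_E\cap\Omega$. This is $1$-finely open because $\fint I_E$ is $1$-finely open by definition and $\Omega$ is open (hence trivially $1$-finely open, since the complement of an open set is empty near any interior point and therefore $1$-thin there). The residual set to control is then
\[
N := (I_E\cap\Omega)\setminus\fint I_E.
\]

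To show $\mathcal H(N)=0$, I would argue in two short steps. First, any $x\in N$ lies in $I_E\subset\overline{I_E}^1$ (by \eqref{eq:fine closure}) and, by hypothesis, not in $\fint I_E$; so $x\in\overline{I_E}^1\setminus\fint I_E=\partial^1 I_E$, and hence $N\subset\partial^1 I_E\cap\Omega$. Second, any $x\in I_E$ satisfies $\Theta(X\setminus E,x)=0$ and in particular $\Theta^*(X\setminus E,x)=0$, so $x\notin\partial^*E$ by the definition of the measure-theoretic boundary. Combining, $N\subset(\partial^1 I_E\setminus\partial^*E)\cap\Omega$, and the latter set has $\mathcal H$-measure zero by the fine Federer-type identity \eqref{eq:fine boundary and measure} in Theorem \ref{thm:Federer fine}, which applies because $P(E,\Omega)<\infty$.

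I do not anticipate any real obstacle here: once Theorem \ref{thm:Federer fine} is available, the argument reduces to the two set-theoretic inclusions above, the most delicate being $N\cap\partial^*E=\emptyset$, which is however immediate from the definitions of $I_E$ and $\partial^*E$. The symmetric treatment of $O_E\cap\Omega$ requires no additional ingredients.
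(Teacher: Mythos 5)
Your argument is correct. The paper itself states this proposition as an imported result (\cite[Corollary 4.4]{L-Fed}) without reproducing a proof, but your derivation from Theorem \ref{thm:Federer fine} (which is \cite[Theorem 1.1]{L-Fed}) is exactly the kind of argument that makes the cited statement a ``corollary'' there: the $1$-finely open piece is $V=\fint I_E\cap\Omega$ (the intersection of two $1$-finely open sets), the residual set $N=(I_E\cap\Omega)\setminus\fint I_E$ satisfies $N\subset(\partial^1 I_E\setminus\partial^*E)\cap\Omega$ since $I_E\subset\overline{I_E}^1$ and $I_E\cap\partial^*E=\emptyset$, and \eqref{eq:fine boundary and measure} kills $N$ in $\mathcal H$-measure; the $O_E$ case then follows by complementation. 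No gaps.
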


\begin{lemma}\label{lem:cap negligible}
Let $D,H\subset X$ with $\mathcal H(D\Delta H)=0$. Then
\[
\fint D \Delta\fint H,
\quad \overline{D}^1 \Delta\overline{H}^1,
\quad \partial^1 D\Delta\partial^1 H
\]
are all $\mathcal H$-negligible.
\end{lemma}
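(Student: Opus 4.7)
The plan is to reduce the lemma to the auxiliary statement that the $1$-base $b_1(\cdot)$ is invariant under modification by a set of codimension-one Hausdorff measure zero; once this is granted, the three symmetric differences are controlled directly by $D\Delta H$ via the explicit formulas \eqref{eq:fine closure}, \eqref{eq:fine int}, and \eqref{eq:fine bdry char}.

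First I would prove the key claim: if $\mathcal{H}(N)=0$, then $b_1 A = b_1(A\cup N)$ for every $A\subset X$. By \eqref{eq:null sets of Hausdorff measure and capacity} we have $\capa_1(N)=0$, hence $\capa_1(N\cap B(x,r))=0$ by monotonicity. Combined with \eqref{eq:Cap and cap estimate}, this yields $\rcapa_1(N\cap B(x,r),B(x,2r))=0$ for all sufficiently small $r>0$; the restriction $r<\tfrac18\diam X$ in that estimate is harmless, since $1$-thinness depends only on the limit as $r\to 0$. Subadditivity and monotonicity of $\rcapa_1$ then sandwich
\[
\rcapa_1(A\cap B(x,r),B(x,2r)) = \rcapa_1((A\cup N)\cap B(x,r),B(x,2r))
\]
for all small $r>0$, and the claim follows immediately from Definition \ref{def:1 fine topology}.

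Next I would apply this observation twice: with $N:=H\setminus D$ and with $N:=D\setminus H$, both contained in the $\mathcal{H}$-null set $D\Delta H$, to conclude $b_1 D = b_1(D\cup H) = b_1 H$. The same argument applied to the complements (using $(X\setminus D)\Delta(X\setminus H)=D\Delta H$) yields $b_1(X\setminus D)=b_1(X\setminus H)$. Setting $B:=b_1 D=b_1 H$ and $B':=b_1(X\setminus D)=b_1(X\setminus H)$, the formulas \eqref{eq:fine closure} and \eqref{eq:fine int} give
\[
\overline{D}^1\Delta\overline{H}^1 = (D\cup B)\Delta(H\cup B)\subset D\Delta H \quad\text{and}\quad \fint D\,\Delta\,\fint H = (D\setminus B')\Delta(H\setminus B')\subset D\Delta H,
\]
and a similar two-piece comparison via \eqref{eq:fine bdry char} yields $\partial^1 D\Delta\partial^1 H\subset D\Delta H$. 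Since $\mathcal{H}(D\Delta H)=0$ by hypothesis, all three symmetric differences are $\mathcal{H}$-negligible.

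I do not anticipate a genuine obstacle. The only step requiring a line of care is the passage from $\capa_1(N\cap B(x,r))=0$ to $\rcapa_1(N\cap B(x,r),B(x,2r))=0$ uniformly in small $r$, which is exactly what \eqref{eq:Cap and cap estimate} provides.
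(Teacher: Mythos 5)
Your proof is correct and follows essentially the same route as the paper's: establish that $b_1$ is unchanged by modification on an $\mathcal{H}$-null set, then read off the three symmetric differences from the formulas \eqref{eq:fine closure}, \eqref{eq:fine int}, and \eqref{eq:fine bdry char}. The only cosmetic difference is that the paper passes directly through \eqref{eq:cap and H measure} (namely $\rcapa_1(A,2B)\le C_d\mathcal{H}(A)$) to kill the relative capacity of the null set, whereas you reach the same conclusion via \eqref{eq:null sets of Hausdorff measure and capacity} and \eqref{eq:Cap and cap estimate}.
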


Recall from \eqref{eq:null sets of Hausdorff measure and capacity} that being
$\mathcal H$-negligible is equivalent to being
$\capa_1$-negligible.

\begin{proof}
By \eqref{eq:fine closure}, we have $\overline{D}^1=D\cup b_1 D$ and $\overline{H}^1=H\cup b_1 H$.
From \eqref{eq:cap and H measure} we get
$b_1 D= b_1 H$. Thus $\mathcal H(\overline{D}^1 \Delta\overline{H}^1)=0$.

Since $\fint D=X\setminus \overline{X\setminus D}^1$, we also get
$\mathcal H(\fint{D} \Delta\fint{H})=0$.

Since $\partial^1 D= \overline{D}^1\setminus \fint D$, we finally get
$\mathcal H(\partial^1 D \Delta\partial^1 H)=0$.
\end{proof}

\subsection{Indecomposable sets}\label{sec:indecomposable}
\begin{definition}
	Let $E\subset X$ be a set of finite perimeter. Given any Borel set $D\subset X$, we say that
	$E$ is \textbf{decomposable} in $D$ provided there exists a partition $\{F,G\}$ of $E\cap D$ into sets of finite perimeter such that $\frm(F),\frm(G)>0$ and $P(E,D)=P(F,D)+P(G,D)$.
	On the other hand, we say that $E$ is \textbf{indecomposable in} $D$ if it is not
	decomposable in $D$. For brevity, we say that $E$ is \textbf{decomposable}
	(resp. \textbf{indecomposable}) provided it is decomposable in $X$ (resp. indecomposable in $X$).
\end{definition}

We record some results from \cite{BPR} that we will need later. Recall
the definition of isotropic space from Definition \ref{def:isotropicity}.

\begin{lemma}[{\cite[Lemma 2.3]{BPR}}]\label{lem:measure theoretic boundaries}
Suppose $X$ is isotropic. Let $E,F\subset X$ be sets of finite perimeter.
Then the following hold:
\begin{enumerate}[(i)]
\item If $P(E\cup F,X)=P(E,X)+P(F,X)$, then $\mathcal H(\partial^*E\cap \partial^*F)=0$.
\item Suppose $E,F\subset X$ are disjoint.
If $\mathcal H(\partial^*E\cap \partial^*F)=0$, then
\[
P(E\cup F,X)=P(E,X)+P(F,X).
\]
\end{enumerate}
\end{lemma}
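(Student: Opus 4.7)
My plan is to establish both parts using the integral representation \eqref{eq:def of theta}, namely $P(A,X)=\int_{\partial^*A}\theta_A\,\dd\mathcal H$ with $\theta_A\ge\alpha>0$, combined with isotropicity applied to the containments $E\subset E\cup F$ and $F\subset E\cup F$. The backbone of both implications will be a pointwise density identity relating $\partial^*E,\partial^*F,\partial^*(E\cup F)$, and $\partial^*(E\cap F)$:
\begin{equation*}
\partial^*E\cup\partial^*F\;\subset\;\partial^*(E\cup F)\cup\partial^*(E\cap F)\cup(\partial^*E\cap\partial^*F).
\end{equation*}

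I would prove this inclusion pointwise. For $x\in\partial^*E\setminus\partial^*(E\cup F)$, the positivity $\Theta^*(E,x)>0$ excludes $\Theta^*(E\cup F,x)=0$, forcing $\Theta(X\setminus(E\cup F),x)=0$. Assuming additionally $x\notin\partial^*F$, either $\Theta(F,x)=0$ (which combined with $\Theta(E\cup F,x)=1$ yields $\Theta(X\setminus E,x)=0$, contradicting $x\in\partial^*E$) or $\Theta(F,x)=1$; in the latter case, $\Theta^*(E,x)\le\Theta^*(E\cap F,x)+\Theta^*(X\setminus F,x)=\Theta^*(E\cap F,x)$ gives $\Theta^*(E\cap F,x)>0$, while $\Theta^*(X\setminus(E\cap F),x)\ge\Theta^*(X\setminus E,x)>0$, placing $x\in\partial^*(E\cap F)$.

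For part (ii), disjointness makes $\partial^*(E\cap F)=\emptyset$, and the hypothesis $\mathcal H(\partial^*E\cap\partial^*F)=0$ collapses the identity to $\partial^*(E\cup F)=\partial^*E\sqcup\partial^*F$ modulo an $\mathcal H$-null set. Isotropicity then yields $\theta_E=\theta_{E\cup F}$ on $\partial^*E$ and $\theta_F=\theta_{E\cup F}$ on $\partial^*F$ (up to the same null set), so summing the representation formulas for $P(E,X)$ and $P(F,X)$ immediately produces $P(E\cup F,X)$.

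For part (i), the subadditivity \eqref{eq:perimeter subadditivity} applied with $W=X$ combined with the hypothesis forces $P(E\cap F,X)=0$, hence $\mathcal H(\partial^*(E\cap F))=0$ by the representation formula, so the identity above simplifies to $\partial^*E\cup\partial^*F=\partial^*(E\cup F)\cup(\partial^*E\cap\partial^*F)$ $\mathcal H$-a.e. Splitting $\partial^*E$ and $\partial^*F$ along $\partial^*(E\cup F)$, applying isotropicity on the pieces inside $\partial^*(E\cup F)$, and noting that the pieces outside lie in $\partial^*E\cap\partial^*F$ modulo null sets, the equality $P(E,X)+P(F,X)=P(E\cup F,X)$ rearranges to
\begin{equation*}
\int_{\partial^*E\cap\partial^*F\cap\partial^*(E\cup F)}\!\!\theta_{E\cup F}\,\dd\mathcal H\,+\,\int_{(\partial^*E\cap\partial^*F)\setminus\partial^*(E\cup F)}\!\!(\theta_E+\theta_F)\,\dd\mathcal H=0,
\end{equation*}
and since all integrands are $\ge\alpha>0$ we conclude $\mathcal H(\partial^*E\cap\partial^*F)=0$. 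The main obstacle I anticipate is the pointwise density analysis for the geometric identity, specifically the degenerate case $\Theta(F,x)=1$ in part (i), where the lack of disjointness pushes $x$ into $\partial^*(E\cap F)$ rather than $\partial^*F$; the rest of the argument is a clean bookkeeping exercise.
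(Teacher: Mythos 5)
This statement is quoted directly from \cite[Lemma 2.3]{BPR}; the present paper gives no proof of its own, so there is no internal argument to compare against. Your proof is correct and self-contained. The pointwise inclusion $\partial^*E\cup\partial^*F\subset\partial^*(E\cup F)\cup\partial^*(E\cap F)\cup(\partial^*E\cap\partial^*F)$ is valid: for $x\in\partial^*E\setminus\partial^*(E\cup F)$ you correctly deduce $\Theta(X\setminus(E\cup F),x)=0$, and if $x\notin\partial^*F$, the subcase $\Theta(F,x)=0$ yields $\Theta^*(X\setminus E,x)\le\Theta^*(X\setminus(E\cup F),x)+\Theta^*(F,x)=0$, contradiction, while the subcase $\Theta(F,x)=1$ forces $\Theta^*(E\cap F,x)\ge\Theta^*(E,x)>0$ and $\Theta^*(X\setminus(E\cap F),x)\ge\Theta^*(X\setminus E,x)>0$, so $x\in\partial^*(E\cap F)$. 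For part (i), submodularity \eqref{eq:perimeter subadditivity} indeed forces $P(E\cap F,X)=0$, hence $\mathcal H(\partial^*(E\cap F))=0$ since $\theta_{E\cap F}\ge\alpha>0$ in \eqref{eq:def of theta}; after that, your observation that $\partial^*E\setminus\partial^*(E\cup F)$ and $\partial^*F\setminus\partial^*(E\cup F)$ both coincide with $(\partial^*E\cap\partial^*F)\setminus\partial^*(E\cup F)$ modulo $\mathcal H$-null sets, together with isotropicity on $\partial^*E\cap\partial^*(E\cup F)$ and $\partial^*F\cap\partial^*(E\cup F)$, yields exactly the displayed identity, and the strictly positive lower bound on the densities $\theta$ finishes it. Part (ii) is a simpler instance of the same bookkeeping with $\partial^*(E\cap F)=\emptyset$. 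This is the standard route for such statements and is in the same spirit as the proof in \cite{BPR}.
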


\begin{theorem}[{\cite[Theorem 2.14]{BPR}} and {\cite[Theorem 1.1]{L-decom}}]\label{thm:essential connected components}
Given a set of finite perimeter $E\subset X$, there exists a unique
(modulo sets of $\frm$-measure zero)
finite or countable partition $\{E_j\}_{j}$ of $E$ into indecomposable sets $E_j$ such that
$\frm(E_j)>0$ for every $j$, and 
\[
P(E,X)=\sum_{j}P(E_j,X).
\]
\end{theorem}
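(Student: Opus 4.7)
The plan is to construct the decomposition directly from the $1$-fine connected components of $\fint I_E$, using Theorem \ref{thm:iff intro} to identify each component with an indecomposable piece of $E$. By Proposition \ref{prop:IE and OE}, $I_E\setminus \fint I_E$ is $\mathcal H$-negligible and hence $\frm$-negligible, so $\frm(E\triangle \fint I_E)=0$. Invoking the local connectedness of the $1$-fine topology (established in Section \ref{sec:locally connected}), the $1$-finely open set $\fint I_E$ decomposes uniquely into pairwise disjoint $1$-fine connected components $\{U_j\}_j$, each itself $1$-finely open. Countability follows because every nonempty $1$-finely open set has strictly positive $\frm$-measure (at each of its points its complement is $1$-thin, hence of density zero by \eqref{eq:thinness and measure thinness}), combined with the $\sigma$-finiteness of $\frm$ and the pairwise disjointness of the $U_j$'s.

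Next I show each $U_j$ has finite perimeter and is indecomposable. Being a connected component of the $1$-finely open set $\fint I_E$, $U_j$ is also $1$-finely closed in $\fint I_E$, so $\partial^1 U_j\subset \partial^1 \fint I_E$. By Lemma \ref{lem:cap negligible} this agrees with $\partial^1 I_E$ up to an $\mathcal H$-null set, and $\mathcal H(\partial^1 I_E)<\infty$ by Theorem \ref{thm:Federer fine}; the same theorem then yields $P(U_j,X)<\infty$. Setting $E_j:=U_j$, the $1$-fine openness of $U_j$ together with \eqref{eq:thinness and measure thinness} gives $\Theta(U_j,x)=1$ for every $x\in U_j$, so in particular $U_j\subset \fint I_{U_j}$. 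Conversely, since $\frm(U_j\triangle I_{U_j})=0$ by standard density arguments, any $x\in \fint I_{U_j}$ satisfies $\Theta(U_j,x)=\Theta(I_{U_j},x)=1$, placing it in $b_1 U_j\subset \overline{U_j}^1$. Therefore
\[
U_j\ \subset\ \fint I_{U_j}\ \subset\ \overline{U_j}^1,
\]
and since any subset of the closure of a connected set that contains the original set is itself connected, $\fint I_{U_j}$ is $1$-fine connected. Theorem \ref{thm:iff intro} then yields indecomposability of $U_j$.

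For the perimeter identity $P(E,X)=\sum_j P(U_j,X)$, the two-sidedness property is the key ingredient. Suppose $j\neq k$ and $x\in \partial^*U_j\cap \partial^*U_k$. By two-sidedness, at $\mathcal H$-a.e.\ such $x$ we have $\Theta(X\setminus(U_j\cup U_k),x)=0$. On the other hand, the chain $\partial^*U_j\subset \partial^1 U_j\subset \partial^1 \fint I_E$ combined with Lemma \ref{lem:cap negligible} and \eqref{eq:fine boundary and measure} gives $\partial^*U_j\subset \partial^*E$ up to $\mathcal H$-null sets, so $\Theta^*(X\setminus(U_j\cup U_k),x)\ge \Theta^*(X\setminus E,x)>0$ at $\mathcal H$-a.e.\ such $x$---a contradiction. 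Hence $\mathcal H(\partial^*U_j\cap \partial^*U_k)=0$ for $j\neq k$, and Lemma \ref{lem:measure theoretic boundaries}(ii) applied inductively gives additivity on finite unions; together with the subadditivity \eqref{eq:perimeter of union} and the lower semicontinuity of perimeter along the $L^1_{\loc}$-convergent sequence $\bigcup_{j\le n}U_j\to E$, this delivers the identity.

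Uniqueness is then nearly immediate from Theorem \ref{thm:iff intro}: for any other partition $\{F_k\}_k$ of $E$ into indecomposable sets of positive measure with $P(E,X)=\sum_k P(F_k,X)$, each $\fint I_{F_k}$ is $1$-fine connected and contained in $\fint I_E$, hence lies inside a unique $U_{j(k)}$, and a standard argument using the additivity hypothesis forces $F_k=U_{j(k)}$ up to $\frm$-null sets (otherwise some $U_j$ would be split by two or more $F_k$'s, contradicting its indecomposability). The step I expect to be most technical is the perimeter-additivity argument: two-sidedness controls only the \emph{essential} boundary, so I need to carefully translate the $1$-fine containment $\partial^1 U_j\subset \partial^1 \fint I_E$ into an $\mathcal H$-a.e.\ essential-boundary inclusion before invoking it.
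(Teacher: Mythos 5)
Your proposal reproduces, in outline, the paper's ``new proof'' of this decomposition theorem, which is exactly what the paper does via Proposition~\ref{prop:finite perimeter} and Theorems~\ref{thm:iff} and~\ref{thm:main}: decompose $\fint I_E$ into $1$-finely connected components, identify each as an indecomposable set using the main characterization, and establish the perimeter additivity from the two-sidedness property. Before flagging the issues, note that your argument (like the paper's alternative proof) uses two-sidedness in an essential way, whereas Theorem~\ref{thm:essential connected components} is stated for \emph{arbitrary} PI spaces; the paper is explicit that the general case is delegated to \cite{BPR,L-decom}, and you should be too.

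The genuine gap is in the perimeter identity. You get $P(E,X)\le\sum_j P(U_j,X)$ from the subadditivity~\eqref{eq:perimeter of union}. But your other ingredient---lower semicontinuity along $\bigcup_{j\le n}U_j\to E$---gives $P(E,X)\le\liminf_n P\bigl(\bigcup_{j\le n}U_j,X\bigr)=\sum_j P(U_j,X)$, which is the \emph{same} direction. Finite additivity $P\bigl(\bigcup_{j\le n}U_j,X\bigr)=\sum_{j\le n}P(U_j,X)$ does not by itself show that each partial union has perimeter $\le P(E,X)$. The paper closes this by first proving (Proposition~\ref{prop:finite perimeter}) that \emph{every} union of components, including the tails $\bigcup_{j>n}U_j$, has finite perimeter; then Lemma~\ref{lem:measure theoretic boundaries}(ii) applied to the disjoint pair $\bigcup_{j\ne k}V_j$ and $V_k$ yields~\eqref{eq:V j}, and iterating that produces the reverse bound $\sum_j P(V_j,X)\le P\bigl(\bigcup_j V_j,X\bigr)$. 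Alternatively one can finish with the representation~\eqref{eq:def of theta} and isotropicity: since $\partial^* U_j\subset\partial^* E$ up to $\mathcal H$-null sets and $\theta_{U_j}=\theta_E$ $\mathcal H$-a.e.\ on $\partial^* U_j$, the essential disjointness of the $\partial^* U_j$ gives $\sum_j P(U_j,X)\le\int_{\partial^* E}\theta_E\,\dd\mathcal H=P(E,X)$. As written, your proof never obtains this inequality.

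A smaller but real misstep: to get $P(U_j,X)<\infty$ you cite Theorem~\ref{thm:Federer fine}, but that theorem characterizes finite perimeter via $\mathcal H(\partial^1 I_{U_j})<\infty$, whereas you have established $\mathcal H(\partial^1 U_j)<\infty$. The identification $\mathcal H(\partial^1 U_j\,\Delta\,\partial^1 I_{U_j})=0$ requires $\mathcal H(U_j\,\Delta\,I_{U_j})=0$, which your ``standard density arguments'' give only at the level of $\frm$, not $\mathcal H$; the paper proves the $\mathcal H$-statement in Proposition~\ref{prop:finite perimeter} by a careful comparison with $I_E\setminus\fint I_E$. The cleanest repair for your purposes is to note instead that $\partial^*U_j\subset\partial^1 U_j$ (which follows directly from~\eqref{eq:thinness and measure thinness} and~\eqref{eq:fine bdry char}), hence $\mathcal H(\partial^*U_j)<\infty$, and invoke Federer's characterization in the form of Theorem~\ref{thm:Federers characterization}. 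The rest of your outline---the countability argument, the connectedness of $\fint I_{U_j}$ via the topological sandwich $U_j\subset\fint I_{U_j}\subset\overline{U_j}^1$, and the use of two-sidedness to show $\mathcal H(\partial^*U_j\cap\partial^*U_k)=0$---is sound.
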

The sets $E_j$ are called the \emph{essential connected components} of $E$.

We say that $X$ is geodesic if for every pair of points $x,y\in X$, there exists a curve
$\gamma\colon [0,\ell_{\gamma}]\to X$ with $\gamma(0)=x$, $\gamma(\ell_{\gamma})=y$, and $\ell_{\gamma}=d(x,y)$.

We have the following connection between essential connected components and connected components
in the metric topology.

\begin{proposition}[{\cite[Proposition 4.1]{L-Ftype}}]\label{prop:connected components}
	Suppose $X$ is geodesic.
	Let $\Om\subset X$ be open and let
	$F\subset \Om$ be relatively closed with $\ch_F\in \BV_{\loc}(\Om)$.
	Denote the components of $F$ having nonzero
	$\frm$-measure by $F_1,F_2,\ldots$. Then
	\[
	\frm\Bigg(F\setminus \bigcup_{j}F_j\Bigg)=0
	\quad\ \textrm{and}\quad\
	P(F,A)=\sum_{j}P(F_j,A)\ \ \textrm{for any }A\subset\Om.
	\]
\end{proposition}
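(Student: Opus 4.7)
The plan is to identify the topological components $F_j$ with the essential connected components of $F$ (those produced by Theorem~\ref{thm:essential connected components}), from which both conclusions follow at once.

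\emph{Step 1: local essential decomposition.} Since $\ch_F\in\BV_{\loc}(\Omega)$, I exhaust $\Omega$ by open subsets $\Omega_n\Subset\Omega$ with $P(F,\Omega_n)<\infty$, apply a local version of Theorem~\ref{thm:essential connected components} on each $\Omega_n$, and use the modulo-$\frm$-null uniqueness to glue across $n$. This gives a countable partition $F=\bigsqcup_k \tilde F_k$ modulo $\frm$-null into indecomposable sets of positive $\frm$-measure with
\[
P(F,A)=\sum_k P(\tilde F_k,A)
\]
for every Borel $A\subset\Omega$.

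\emph{Step 2: disjointness of topological boundaries.} Since $F$ is relatively closed in $\Omega$ and connected components of closed sets are closed, every topological component $F_j$ is closed in $\Omega$. Hence for $x\in\Omega$ the condition $\Theta^*(F_j,x)>0$ forces $x\in F_j$, so distinct components $F_j\ne F_l$ of $F$ satisfy $\partial^*F_j\cap\partial^*F_l\cap\Omega=\emptyset$. The geodesic structure of $X$ enters here: it lets us promote topological separation of components of $F$ into separation at the level of measure-theoretic boundaries, via density computations along connecting geodesics.

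\emph{Step 3: matching.} Fix an essential piece $\tilde F_k$. Since $\tilde F_k\subset F$ modulo $\frm$-null and $\frm(\tilde F_k)>0$, at least one positive-measure topological component $F_{j_k}$ has $\frm(\tilde F_k\cap F_{j_k})>0$. I claim $\frm(\tilde F_k\setminus F_{j_k})=0$. Otherwise, setting $A:=\tilde F_k\cap F_{j_k}$ and $B:=\tilde F_k\setminus F_{j_k}$, both of positive $\frm$-measure, the additivity
\[
P(\tilde F_k,X)=P(A,X)+P(B,X)
\]
would contradict the indecomposability of $\tilde F_k$. The inequality $\le$ is \eqref{eq:perimeter subadditivity}; for the reverse, I use that $A\subset F_{j_k}$ and $B\subset F\setminus F_{j_k}$, whose measure-theoretic boundaries are $\mathcal H$-a.e.\ disjoint in $\Omega$ by Step~2, so a computation with \eqref{eq:def of theta} gives the additivity. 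With the matching established, $\bigcup_k\tilde F_k=\bigcup_j F_j$ modulo $\frm$-null, yielding $\frm(F\setminus\bigcup_j F_j)=0$ and $P(F,A)=\sum_j P(F_j,A)$.

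The main obstacle is the perimeter additivity in Step~3: the representation \eqref{eq:def of theta} introduces boundary density factors $\theta_A,\theta_B,\theta_{\tilde F_k}$ whose compatibility on shared portions of the essential boundary is delicate without an a priori isotropicity assumption. Reconciling these factors via the boundary disjointness of Step~2 is where I expect the geodesic hypothesis to be used most essentially.
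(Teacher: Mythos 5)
Your overall plan---identify the topological components with the essential connected components produced by Theorem~\ref{thm:essential connected components} and transfer the perimeter decomposition---is natural, but it breaks down at exactly the point you flag as the ``main obstacle,'' and the geodesic hypothesis does not rescue it in the way you hope.

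The first concrete gap is in passing from Step~2 to Step~3. In Step~2 you correctly observe that for \emph{two} distinct components $F_j\ne F_l$, the closedness of each forces $\partial^*F_j\cap\partial^*F_l\cap\Omega=\emptyset$, since a point where a closed set has positive upper density must lie in it. But in Step~3 you apply this to the pair $A=\tilde F_k\cap F_{j_k}$ and $B=\tilde F_k\setminus F_{j_k}$, and the same logic does not transfer: $B$ lies inside $F\setminus F_{j_k}$, which is a \emph{countable} union of other closed components and in general is not closed, so one cannot conclude that a positive-upper-density point of $B$ lies in $B$. A point $x\in F_{j_k}$ may perfectly well be a measure-theoretic accumulation point of the union $\bigcup_{l\neq j_k}F_l$ even though each individual $F_l$ has density zero at $x$. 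Ruling this out ($\mathcal H$-a.e.) is the analytical heart of the proposition; it is exactly where $\ch_F\in\BV_{\loc}(\Omega)$ together with geodesicity must be used quantitatively (via the relative isoperimetric inequality, to bound the number of components with non-negligible measure in a small ball, as in Proposition~\ref{prop:components are subminimizers}). Your Step~2 does not address this accumulation scenario at all, so ``by Step~2'' in Step~3 is not justified.

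The second gap is the one you name yourself: even after obtaining $\mathcal H(\partial^*A\cap\partial^*B)=0$, the reverse inequality $P(A,X)+P(B,X)\le P(\tilde F_k,X)$ that you need to contradict indecomposability rests on Lemma~\ref{lem:measure theoretic boundaries}(ii), which is stated under the hypothesis that $X$ is isotropic. Isotropicity is \emph{not} among the standing assumptions of the paper and is not implied by geodesicity, and no mechanism is offered to replace it. (For two disjoint \emph{closed} sets one can prove additivity with no isotropicity, purely by locality: around each point of one set there is a ball missing the other. But $A$ and $B$ in your Step~3 are not closed, and $B$ may have no separating neighbourhoods, so that argument is unavailable.) Finally, even granting the matching $\tilde F_k\subset F_{j_k}$ mod $\frm$-null, your last sentence does not yet give $P(F,A)=\sum_j P(F_j,A)$: the essential decomposition gives $P(F,\cdot)=\sum_k P(\tilde F_k,\cdot)$ over the \emph{refinement} $\{\tilde F_k\}$, and passing to the coarser partition $\{F_j\}$ requires both that each $F_j$ has locally finite perimeter and a further countable additivity $P(F_j,\cdot)=\sum_{k:\,j_k=j}P(\tilde F_k,\cdot)$, neither of which is automatic. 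In short, the proposal reduces the statement to the hardest step of its proof (perimeter additivity across the topological components) rather than proving it.
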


\subsection{Motivating examples}

A natural question is whether one could use
the metric topology or some other natural
topology in formulating a result similar to Theorem 
\ref{thm:iff intro}. We consider some simple examples.

\begin{example}\label{ex:rationals}
	Consider the (unweighted) plane $\R^2$ and the set of finite perimeter
	\[
	E:=\R^2\setminus \bigcup_{j=1}^{\infty}B(q_j,2^{-j}),
	\]
	where $\{q_j\}_{j=1}^{\infty}$ is an enumeration of the points on the plane with rational coordinates.
	If we use the metric topology, the interior $\inte I_E$ is empty, and so Theorem \ref{thm:iff intro}
	cannot be formulated in the metric topology.
	The same problem arises if we consider any other representative of $E$ in place of $I_E$.
	
	On the other hand, if in place of $\inte I_E$
	one considers, say, $I_E$ or $\overline{I_E}$ (and still considers the metric topology),
    one also runs into problems
    with certain sets $E$, as we will see
	in the next example.
\end{example}

\begin{example}\label{ex:disks}
	Consider the (unweighted) plane $\R^2$ and the set
	\[
	E:=B((0,0),1)\cup B((2,0),1).
	\]
	Now the essential connected components of $E$ are the two disks $B((0,0),1)$  and  $B((2,0),1)$.
	
	On the other hand, $I_E=E\cup \{(1,0)\}$, and then $\fint I_E=E$, so that the connected components of $\fint I_E$
	in the $1$-fine topology are also $B((0,0),1)$  and  $B((2,0),1)$,
    consistently with Theorem \ref{thm:iff intro}.
	
	In this case, the connected components of $\inte I_E=E$ in the metric topology
    \emph{are} also the two disks,
	but as we saw in the previous example, considering $\inte I_E$ does not always work.
	On the other hand, if we consider $I_E$ or $\overline{I_E}$, these are
    equal to $E\cup \{(1,0)\}$ and $\overline{E}$ respectively,
    which are both connected in the metric topology.

	Alternatively, one might consider the measure topology
    in which a set $A$ is open if $\Theta(X\setminus A,x)=0$
	for all $x\in A$ (as in Definition \ref{def:measure topology}).
	However, in this topology we would have that $I_E$ is itself a connected open set.
\end{example}

These examples motivate us to consider the $1$-fine topology.

\subsection{Superminimizers}\label{sec:superminimizer}

In this section we record and prove some results on superminimizers.

In this subsection, the symbol $\Omega$ will always denote a nonempty open subset of $X$.
We denote by $\BV_c(\Om)$ the class of functions $\varphi\in\BV(\Om)$ with compact
support in $\Om$, that is, $\supp \varphi\Subset \Om$.

\begin{definition}
	We say that $u\in\BV_{\loc}(\Om)$ is a \textbf{$1$-minimizer} in $\Om$ if
	for all $\varphi\in \BV_c(\Om)$,
	\begin{equation}\label{eq:definition of 1minimizer}
		\Vert Du\Vert(\supp\varphi)\le \Vert D(u+\varphi)\Vert(\supp\varphi).
	\end{equation}
	We say that $u\in\BV_{\loc}(\Om)$ is a \textbf{$1$-superminimizer} in $\Om$
	if \eqref{eq:definition of 1minimizer} holds for all nonnegative $\varphi\in \BV_c(\Om)$.
	We say that $u\in\BV_{\loc}(\Om)$ is a \textbf{$1$-subminimizer} in $\Om$ if
	\eqref{eq:definition of 1minimizer} holds for all nonpositive $\varphi\in \BV_c(\Om)$,
	or equivalently if $-u$ is a $1$-superminimizer in $\Om$.
\end{definition}

Given a function $\psi\colon\Om\to\overline{\R}$, with $\Om$ bounded,
and $f\in L^1_{\loc}(X)$
with $\Vert Df\Vert(X)<\infty$, we define the class of admissible functions
\[
\mathcal K_{\psi,f}(\Om):=\{u\in\BV_{\loc}(X):\,u\ge \psi\textrm{ in }\Om\textrm{ and }u=f\textrm{ in }X\setminus\Om\}.
\]
The (in)equalities above are understood in the a.e.\ sense, since $\BV$ functions are only
defined up to sets of $\frm$-measure zero.
Clearly $\Vert Du\Vert(X)<\infty$
for every $u\in\mathcal K_{\psi,f}(\Om)$.

\begin{definition}
	We say that $u\in\mathcal K_{\psi,f}(\Om)$ is a solution of the $\mathcal K_{\psi,f}(\Om)$-obstacle problem
	if $\Vert Du\Vert(X)\le \Vert Dv\Vert(X)$ for all $v\in\mathcal K_{\psi,f}(\Om)$.
    If $u=\ch_E$ for some $E\subset X$ and $u$ is a solution, we also call the set $E$ a solution.
\end{definition}

The following fact follows directly from the definitions.

\begin{proposition}\label{prop:solutions are superminimizers}
	If $u\in\mathcal K_{\psi,f}(\Om)$ is a solution
	of the $\mathcal K_{\psi,f}(\Om)$-obstacle problem, then $u$
	is a $1$-superminimizer in $\Om$.
\end{proposition}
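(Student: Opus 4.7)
The idea is to unwind the definitions: any nonnegative perturbation $\varphi\in\BV_c(\Om)$ is automatically admissible when added to $u$, so the global minimization inequality combined with locality of the total variation measure will yield the localized inequality defining $1$-superminimality.

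First I would fix a nonnegative $\varphi\in\BV_c(\Om)$ and set $K:=\supp\varphi$, a compact subset of $\Om$. I then verify that $u+\varphi\in\mathcal K_{\psi,f}(\Om)$: since $u\ge\psi$ a.e.\ in $\Om$ and $\varphi\ge 0$, one has $u+\varphi\ge u\ge\psi$ a.e.\ in $\Om$; and since $\varphi=0$ a.e.\ on the open set $X\setminus K\supset X\setminus\Om$, one has $u+\varphi=u=f$ a.e.\ in $X\setminus\Om$. Applying the solution property of $u$ against the competitor $u+\varphi$ gives the global comparison
\[
\|Du\|(X)\le\|D(u+\varphi)\|(X).
\]

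To localize, I would invoke the standard locality of the total variation measure: because $u$ and $u+\varphi$ agree a.e.\ on the open set $X\setminus K$, one has $\|D(u+\varphi)\|(X\setminus K)=\|Du\|(X\setminus K)$. Both measures are Borel, finite on $X$ (since admissibility forces $\|Dv\|(X)<\infty$ for $v\in\{u,u+\varphi\}$, as recorded right after the definition of $\mathcal K_{\psi,f}(\Om)$), and $K$ is a Borel (in fact closed) set. Splitting $\|Dv\|(X)=\|Dv\|(K)+\|Dv\|(X\setminus K)$ for $v=u$ and $v=u+\varphi$, and cancelling the equal finite quantity on $X\setminus K$ from both sides of the global inequality, produces
\[
\|Du\|(\supp\varphi)\le\|D(u+\varphi)\|(\supp\varphi),
\]
which is precisely \eqref{eq:definition of 1minimizer} for the nonnegative $\varphi$.

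As the authors indicate, the statement is immediate from the definitions, so I do not anticipate any substantive obstacle; the one technical point that deserves attention is simply the finiteness of $\|Dv\|(X\setminus K)$ that justifies the cancellation step, which is handed to us for free by admissibility.
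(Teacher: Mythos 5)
Your proof is correct and is exactly the unwinding of definitions that the paper has in mind when it states that the result ``follows directly from the definitions.'' The verification that $u+\varphi$ is admissible, the use of locality of the total variation measure on the open set $X\setminus\supp\varphi$, the decomposition $\|Dv\|(X)=\|Dv\|(\supp\varphi)+\|Dv\|(X\setminus\supp\varphi)$ for $v\in\{u,u+\varphi\}$ (which requires $\|Dv\|$ to be a finite Borel measure, guaranteed by admissibility), and the cancellation step are all exactly right; nothing is missing.
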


Recall the lower and upper approximate limits $u^\wedge$ and $u^\vee$ from \eqref{eq:lower approximate limit}
and \eqref{eq:upper approximate limit}.

\begin{theorem}[{\cite[Theorem 3.11]{L-WC}}]\label{thm:superminimizers are lsc}
	Let $u$ be a $1$-superminimizer in $\Om$. Then $u^{\wedge}\colon\Om\to (-\infty,\infty]$
	is lower semicontinuous.
\end{theorem}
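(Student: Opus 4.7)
The plan is to show that for every $t \in \R$, the superlevel set $U_t := \{x \in \Om : u^\wedge(x) > t\}$ is open in the metric topology.

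\textbf{Step 1 (superlevel sets as set $1$-superminimizers).} I would first use the coarea formula \eqref{eq:coarea} to translate the superminimizer inequality for $u$ into a set-level statement. Testing $\|Du\|(\supp \varphi) \le \|D(u+\varphi)\|(\supp \varphi)$ with nonnegative competitors $\varphi = \delta \ch_G$ (where $G\Subset\Om$ is a set of finite perimeter) and letting $\delta \to 0$, one extracts that for a.e.\ $s \in \R$ the superlevel set $E_s := \{u > s\}$ satisfies
\[
P(E_s, W) \le P(F, W) \qquad \text{whenever } E_s \subset F \text{ and } F \setminus E_s \Subset W \Subset \Om,
\]
i.e.\ $E_s$ is a \emph{set $1$-superminimizer}.

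\textbf{Step 2 (density estimate).} The next goal is to establish a constant $\gamma > 0$ depending only on the PI-data such that: for every set $1$-superminimizer $E$ in $\Om$ and every $x \in \Om$ with $\Theta^*(X \setminus E, x) > 0$,
\[
\liminf_{r \to 0^+} \frac{\frm(B(x, r) \setminus E)}{\frm(B(x, r))} \ge \gamma.
\]
I would test $E$ against the competitor $F := E \cup B(x, r)$. Minimality together with \eqref{eq:perimeter subadditivity} yields $P(E, B(x, 2\lambda r)) \lesssim P(B(x,r), X) + P(E, B(x, 2\lambda r)\setminus \overline{B(x,r)})$. For generic $r$, coarea applied to $d(x,\cdot)$ gives $P(B(x,r), X) \lesssim \frm(B(x, r))/r$, producing an upper bound on $P(E, B(x, 2\lambda r))$ in terms of $\frm(B(x,r))/r$. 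Plugging this into the relative isoperimetric inequality \eqref{eq:relative isoperimetric inequality} forces $\min\{\frm(B(x, r) \cap E),\frm(B(x, r) \setminus E)\} \ge \gamma \frm(B(x,r))$; the hypothesis $\Theta^*(X \setminus E, x) > 0$ excludes the alternative that $\frm(B(x,r) \setminus E)$ is the smaller term.

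\textbf{Step 3 (conclusion).} To finish, I would fix $x_0 \in U_t$ and choose $s \in (t, u^\wedge(x_0))$ in the a.e.\ set of Step 1, so that $\Theta(\{u<s\}, x_0) = 0$ and $E_s$ is a set $1$-superminimizer. If $U_t$ is not a metric neighborhood of $x_0$, pick $y_n \to x_0$ with $u^\wedge(y_n) \le t < s$; then $\Theta^*(\{u<s\}, y_n) > 0$, and Step 2 applied at each $y_n$ yields $\frm(B(y_n, r) \cap \{u<s\}) \ge \tfrac{\gamma}{2}\frm(B(y_n, r))$ for all sufficiently small $r$. Taking $r_n := 2d(y_n, x_0)$ so that $B(y_n, r_n) \subset B(x_0, 3r_n)$ and using the doubling property to compare $\frm(B(y_n, r_n))$ with $\frm(B(x_0, 3r_n))$ produces a uniform positive lower density of $\{u<s\}$ at $x_0$, contradicting $\Theta(\{u<s\}, x_0) = 0$.

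\textbf{Main obstacle.} The density estimate of Step 2 is the heart of the argument. Although the comparison with a ball competitor is classical in the De Giorgi tradition, in the metric-measure setting it demands a careful generic-radius selection ensuring $\ch_{B(x,r)} \in \BV(X)$ with the correct perimeter-to-measure ratio, together with a sharp interplay between \eqref{eq:perimeter subadditivity} and \eqref{eq:relative isoperimetric inequality} in order to extract a universal constant $\gamma$. Step 1 is standard in spirit but still needs care to handle the support condition on $\varphi$ in the coarea decomposition; the passage from the $\liminf$ in Step 2 to the strict contradiction in Step 3 is harmless since the radii $r_n = 2d(y_n, x_0)$ already tend to $0$.
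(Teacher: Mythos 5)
This theorem is not proved in the paper; it is imported from \cite{L-WC}, where it is deduced from a weak Harnack inequality for $1$-superminimizers obtained by a De Giorgi-type iteration at the level of the function $u$ itself. Your route through the level sets is a reasonable alternative strategy in outline, but both of its load-bearing steps have genuine gaps as written.

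In Step 1, perturbing by $\varphi=\delta\ch_G$ and letting $\delta\to0$ cannot isolate a single level. The coarea formula \eqref{eq:coarea} turns the superminimizing inequality into
\[
\int_{-\infty}^{\infty}P(\{u>\tau\},K)\,\dd\tau\;\le\;\int_{-\infty}^{\infty}P\bigl(\{u>\tau\}\cup(G\cap\{u>\tau-\delta\}),K\bigr)\,\dd\tau,
\]
with $K=\supp\varphi$, and the two integrands differ only for those $\tau$ at which $G\cap\{\tau-\delta<u\le\tau\}$ has positive measure; the perturbation is thus smeared over a continuum of levels, each modified by a set whose measure vanishes as $\delta\to0$, and in the limit the inequality degenerates to a triviality rather than decoupling into the fixed-level comparison $P(E_s,W)\le P(E_s\cup G,W)$. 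The statement you want \emph{is} true, but it is itself a nontrivial one-sided Bombieri--De Giorgi--Giusti-type result (and is in fact one of the ingredients of \cite{L-WC}); it needs a real proof, not a limiting argument of this kind.

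In Step 2, bounding the new boundary of the competitor $E\cup B(x,r)$ by the full perimeter of the ball, $P(B(x,r),X)\lesssim\frm(B(x,r))/r$, is too crude to produce a density lower bound. It yields $P(E,B(x,2\lambda r))\le C\,\frm(B(x,r))/r$, and inserting this into \eqref{eq:relative isoperimetric inequality} gives, with $m:=\min\{\frm(B(x,r)\cap E),\frm(B(x,r)\setminus E)\}$, only $m^{1-1/Q}\le C\,\frm(B(x,r))^{1-1/Q}$, i.e.\ $m\le C^{Q/(Q-1)}\frm(B(x,r))$ with $C\ge1$: a vacuous upper bound and no constant $\gamma$. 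The De Giorgi density estimate requires controlling the boundary created on the sphere by $\mathcal H(\partial B(x,r)\setminus I_E)\le \frac{\dd}{\dd s}\frm(B(x,s)\setminus E)\big|_{s=r}=:m'(r)$ for a.e.\ $r$ (the coarea inequality of Proposition \ref{prop:coarea} and Lemma \ref{lem:E minus ball} applied to $X\setminus E$, not to all of $X$), which closes the differential inequality $m(r)^{(Q-1)/Q}\lesssim m'(r)\,r\,\frm(B(x,r))^{-1/Q}$ and, upon integration, produces $\gamma$. This corrected version also repairs a secondary issue in Step 3: the integrated differential inequality gives the bound $\frm(B(y,r)\setminus E)\ge\gamma\frm(B(y,r))$ for \emph{all} $r$ below a radius controlled by $\dist(y,X\setminus\Om)$, which is what you need at the scale $r_n=2d(y_n,x_0)$; a bare $\liminf$ at each $y_n$ does not guarantee the estimate at that scale, contrary to your closing remark.
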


\begin{lemma}[{\cite[Lemma 3.6]{L-WC}}]\label{lem:solutions from capacity}
	If $x\in X$, $0<r<R<\frac 18 \diam X$, and $A\subset B(x,r)$, then there exists
	$E\subset B(x,R)$ that is a solution of the $\mathcal K_{A,0}(B(x,R))$-obstacle problem,
	with
	\[
	P(E,X)\le \rcapa_1(A,B(x,R)).
	\]
\end{lemma}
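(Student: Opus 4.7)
The plan is to apply the direct method of the calculus of variations, using the coarea formula \eqref{eq:coarea} both to compare with $\rcapa_1(A,B(x,R))$ and to upgrade set-level minimality to full obstacle-problem minimality. I would first introduce the auxiliary infimum
\[
\alpha:=\inf\bigl\{P(F,X):\,A\subset F\subset B(x,R)\text{ up to $\frm$-null sets}\bigr\}
\]
and show $\alpha\le\rcapa_1(A,B(x,R))$ by slicing. Take $u_j\in N^{1,1}(X)$ admissible for $\rcapa_1$, i.e.\ $u_j=0$ on $X\setminus B(x,R)$, $u_j\ge 1$ on $A$, and (after truncation) $0\le u_j\le 1$, with $\int_X g_{u_j}\dfrm\to \rcapa_1(A,B(x,R))$. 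Since $\|Du_j\|(X)\le \int_X g_{u_j}\dfrm$ and by coarea $\|Du_j\|(X)=\int_0^1 P(\{u_j>t\},X)\,\dd t$, I can pick $t_j\in(0,1)$ so that $\{u_j>t_j\}$ is admissible for $\alpha$ and $P(\{u_j>t_j\},X)\le \int_X g_{u_j}\dfrm$, which forces $\alpha\le\rcapa_1(A,B(x,R))$.

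Next, I would produce a minimizer of $\alpha$ by compactness. Let $(F_j)_j$ be a minimizing sequence of admissible sets, so that $P(F_j,X)\to \alpha$. The characteristic functions $\ch_{F_j}$ are uniformly bounded in $L^\infty$, are supported in the compact set $\overline{B(x,R)}$, and have uniformly bounded total variation; the standard $\BV$-compactness in PI spaces then yields $\ch_{F_j}\to \ch_E$ in $L^1(X)$ along a subsequence, the limit being an indicator because a further a.e.\ convergent subsequence takes values in $\{0,1\}$. Passing to the limit in the pointwise inequalities $\ch_A\le \ch_{F_j}\le \ch_{B(x,R)}$ gives $A\subset E\subset B(x,R)$ up to $\frm$-null sets, while the lower semicontinuity of the perimeter in $L^1_{\loc}$ gives $P(E,X)\le \alpha$. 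Admissibility of $E$ then forces $P(E,X)=\alpha\le \rcapa_1(A,B(x,R))$, the quantitative bound demanded by the lemma.

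Finally, I would verify that $\ch_E$ is a solution of the $\mathcal K_{\ch_A,0}(B(x,R))$-obstacle problem. Given any competitor $v\in \mathcal K_{\ch_A,0}(B(x,R))$, the truncation $\min(1,\max(0,v))$ preserves both the obstacle condition and the exterior boundary data while only decreasing the total variation, so I may assume $0\le v\le 1$. For every $t\in(0,1)$ the superlevel set $\{v>t\}$ contains $A$ and is contained in $B(x,R)$ up to $\frm$-null sets, and is therefore admissible for $\alpha$; hence $P(\{v>t\},X)\ge \alpha$ for a.e.\ such $t$, and \eqref{eq:coarea} yields
\[
\|Dv\|(X)=\int_0^1 P(\{v>t\},X)\,\dd t\ge \alpha=P(E,X)=\|D\ch_E\|(X),
\]
which is precisely the obstacle-minimizing property of $\ch_E$.

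The main subtlety I anticipate is the bookkeeping at the level of $\frm$-null sets: the admissibility relations $A\subset F$ and $F\subset B(x,R)$ are required only almost everywhere, which is consistent with the a.e.\ conventions in the definition of $\mathcal K_{\psi,f}$, but one has to check that admissibility does survive the $L^1$-passage to the limit. Apart from this, the proof is a rather standard slicing-plus-compactness argument whose only non-elementary ingredient is $\BV$-compactness in PI spaces; notably, the coarea formula is used twice, once to import the $N^{1,1}$-level bound $\rcapa_1(A,B(x,R))$ into the $\BV$ world, and once to upgrade minimality within the class of admissible sets to minimality within the full class $\mathcal K_{\ch_A,0}(B(x,R))$.
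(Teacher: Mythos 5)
Your proof is correct and follows the standard route for this kind of existence result: (i) introduce the set-level minimization problem $\alpha=\inf\{P(F,X):\,A\subset F\subset B(x,R)\ \text{a.e.}\}$, (ii) import the bound $\alpha\le\rcapa_1(A,B(x,R))$ from $N^{1,1}$ admissible functions by slicing with the coarea formula, (iii) extract a minimizer by $\BV$-compactness on the compact set $\overline{B(x,R)}$ and lower semicontinuity, and (iv) upgrade set-level minimality to minimality over all of $\mathcal K_{\ch_A,0}(B(x,R))$ by slicing any competitor $v$ and observing that a.e.\ superlevel set $\{v>t\}$, $t\in(0,1)$, is itself an admissible set; this is essentially the argument behind \cite[Lemma 3.6]{L-WC}. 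The null-set bookkeeping you flag is handled consistently with the a.e.\ conventions in the definition of $\mathcal K_{\psi,f}$, since admissibility is preserved along the a.e.\ convergent subsequence and the final $E$ may be taken as a representative literally contained in $B(x,R)$. One small remark: your argument never uses the hypothesis $R<\tfrac18\diam X$; this hypothesis is harmless here (it merely rules out degenerate cases such as $B(x,R)=X$, where $\rcapa_1$ vanishes) and is carried along because it is needed for other estimates in \cite{L-WC} and in the places where the lemma is later invoked.
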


We have the following result on the topological components of $F$
(note that this is closely related to Proposition \ref{prop:connected components}).
 
\begin{proposition}[{\cite[Proposition 5.13]{L-Ftype}}]\label{prop:components are subminimizers}
	Suppose $X$ is geodesic.
	Let $F\subset \Om$ be relatively closed
	such that $\ch_F=\ch_F^{\vee}$ is a $1$-subminimizer in $\Om$.
	Denote the components (in the metric topology) of $F$
	with nonzero $\frm$-measure by $F_1,F_2,\ldots$.
	Then $F=\bigcup_{j=1}^{\infty}F_j$
	and each $\ch_{F_k}$ is a $1$-subminimizer in $\Om$.
	Moreover, for every ball $B(y,r)\subset \Om$
	with $0<r<\tfrac14 \diam X$, each component
	$F_k$ intersecting $B(y,r/2)$ satisfies
	\[
	\frac{\frm(F_k\cap B(y,r))}{\frm(B(y,r))}\ge C_1^{-1},
	\]
	where $1\le C_1<\infty$ only depends on $C_d$, $C_P$, and $\lambda$;
	thus there are fewer than $C_1+1$ such components.
\end{proposition}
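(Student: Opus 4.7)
The plan is to build on Proposition~\ref{prop:connected components}, which already delivers $\frm(F\setminus\bigcup_j F_j)=0$ together with the additivity $P(F,A)=\sum_j P(F_j,A)$ for arbitrary $A\subset\Om$. Three tasks remain: to prove each $\ch_{F_k}$ is a $1$-subminimizer, to establish the density lower bound, and to upgrade the measure-zero decomposition of $F$ to an actual set equality.

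To see that each $\ch_{F_k}$ is a $1$-subminimizer, I would fix a nonpositive $\varphi\in\BV_c(\Om)$ and decompose
\[
\ch_F+\varphi=(\ch_{F_k}+\varphi)+\ch_{F\setminus F_k}.
\]
Subadditivity of the BV total variation on $\supp\varphi$ bounds $\Vert D(\ch_F+\varphi)\Vert(\supp\varphi)$ by the sum of the two right-hand contributions. Combined with the additive splitting $\Vert D\ch_F\Vert(\supp\varphi)=\Vert D\ch_{F_k}\Vert(\supp\varphi)+\Vert D\ch_{F\setminus F_k}\Vert(\supp\varphi)$ provided by Proposition~\ref{prop:connected components}, and with the subminimizer inequality for $\ch_F$ itself, the finite quantity $\Vert D\ch_{F\setminus F_k}\Vert(\supp\varphi)$ cancels and leaves exactly $\Vert D\ch_{F_k}\Vert(\supp\varphi)\le\Vert D(\ch_{F_k}+\varphi)\Vert(\supp\varphi)$.

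For the density estimate I would argue by contradiction. Suppose $F_k$ meets $B(y,r/2)$ with $B(y,r)\subset\Om$ and $r<\tfrac14\diam X$, yet $\frm(F_k\cap B(y,r))$ is much smaller than $\frm(B(y,r))$. A Fubini/coarea argument in the radius variable produces some $r'\in(r/2,r)$ at which the slice $F_k\cap\partial B(y,r')$ has $\mathcal H$-measure controlled by $r^{-1}\frm(F_k\cap B(y,r))$; then $\varphi:=-\ch_{F_k\cap B(y,r')}$ lies in $\BV_c(\Om)$ and is admissible for the subminimizer $\ch_{F_k}$, yielding
\[
P(F_k,B(y,r'))\le\frac{C}{r}\,\frm(F_k\cap B(y,r)).
\]
Feeding this into the relative isoperimetric inequality~\eqref{eq:relative isoperimetric inequality} (suitably rescaled, with the smallness assumption forcing the minimum to be realised by $\frm(F_k\cap B(y,r))$) and rearranging gives $\frm(F_k\cap B(y,r))/\frm(B(y,r))\ge C_1^{-1}$ for a constant $C_1$ depending only on $C_d$, $C_P$, and $\lambda$, contradicting the initial supposition.

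With the density bound in hand, every topological component $C$ of $F$ contains a point $x$ around which a ball $B(x,r)\subset\Om$ with $r<\tfrac14\diam X$ can be drawn, and the bound gives $\frm(C\cap B(x,r))\ge C_1^{-1}\frm(B(x,r))>0$; hence $C$ must be one of the $F_j$, proving $F=\bigcup_j F_j$. The counting bound is then immediate: the $F_j$'s meeting $B(y,r/2)$ are disjoint and each occupies at least $C_1^{-1}\frm(B(y,r))$ of measure inside $B(y,r)$, so there are strictly fewer than $C_1+1$ of them. The substantive obstacle is the density step itself: the natural candidate perturbation $-\ch_{F_k\cap B(y,r)}$ is not compactly supported in $\Om$, and the Fubini/coarea device for extracting an intermediate radius $r'$ is what makes the slice perimeter disappear into the volume term and allows the chain of geometric constants in the isoperimetric inequality to close.
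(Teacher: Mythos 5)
The paper does not prove this proposition; it only cites it from \cite[Proposition 5.13]{L-Ftype}, so there is no in-paper argument to compare against. Evaluating your proposal on its own merits: the subminimality step is correct and clean. The decomposition $\ch_F+\varphi=(\ch_{F_k}+\varphi)+\ch_{F\setminus F_k}$, subadditivity of the total variation, the additive splitting from Proposition~\ref{prop:connected components}, and the subminimizer inequality for $\ch_F$ do cancel as you claim to give $\|D\ch_{F_k}\|(\supp\varphi)\le\|D(\ch_{F_k}+\varphi)\|(\supp\varphi)$.

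However, there are two genuine gaps, both in the density estimate and its use. First, a single application of \eqref{eq:relative isoperimetric inequality} followed by ``rearranging'' does not close. The subminimizer comparison gives $P(F_k,B(y,r'))\le Cr^{-1}\frm(F_k\cap B(y,r))$ for a suitable $r'\in(r/2,r)$, but the relative isoperimetric inequality, applied at an inner ball $B(y,s)$ with $2\lambda s\le r'$, only yields
\[
\frm(F_k\cap B(y,s))^{(Q-1)/Q}\,\frm(B(y,s))^{1/Q}\le C'\,\frm(F_k\cap B(y,r)),
\]
and to extract $\frm(F_k\cap B(y,r))\gtrsim\frm(B(y,r))$ from this you would need $\frm(F_k\cap B(y,s))\gtrsim\frm(F_k\cap B(y,r))$, i.e.\ a doubling-type control of $\frm\mres F_k$ near $y$, which is precisely what you are trying to prove. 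The actual argument is a De Giorgi/weak-Harnack iteration centered at a point $z\in F_k\cap B(y,r/2)$: set $m(\rho)=\frm(F_k\cap B(z,\rho))$, combine the subminimizer inequality with Lemma~\ref{lem:E minus ball} and \eqref{eq:relative isoperimetric inequality} to obtain a differential inequality for $m$, and show that if the density at some scale falls below a structural threshold then it decays geometrically as $\rho\to 0$, contradicting the positive upper density at $z$ supplied by the hypothesis $\ch_F=\ch_F^{\vee}$. That hypothesis is never invoked in your proposal, which is a telltale sign that something essential is missing.

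Second, the deduction $F=\bigcup_j F_j$ is circular as written. Your density bound is stated for, and your proof only runs for, the positive-measure components $F_k$; if $\frm(C\cap B(y,r))=0$ for a component $C$, every quantity in your chain vanishes, $0\le 0$ is not a contradiction, and the argument produces nothing. You therefore cannot apply that bound to an arbitrary topological component $C$ and conclude $\frm(C)>0$. Ruling out measure-zero components requires first establishing a pointwise density lower bound for the full set $F$ at each of its points (using that $\ch_F$ is a subminimizer together with $\ch_F=\ch_F^{\vee}$) and then a local-component argument to transfer this to the component through a given point; this is an additional nontrivial step, not a formality that the density estimate for the $F_k$ hands you for free.
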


By Theorem \ref{thm:superminimizers are lsc}, note that for a $1$-subminimizer $\ch_F$ in $\Om$,
the function $\ch_F^{\vee}$ is upper semicontinuous, so that the choice of representative
$\ch_F=\ch_F^{\vee}$ corresponds to choosing the set $F$ to be relatively closed.

Finally, we have the following consequence of the weak Harnack inequality.

\begin{lemma}[{\cite[Lemma 5.9]{L-Ftype}}]\label{lem:smallness in annuli}
	Let $B=B(x,R)$ be a ball with $0<R<\frac{1}{32} \diam X$, and
	suppose that $W\subset B$.
	Let $V\subset 4B$ be a solution of the $\mathcal K_{W,0}(4B)$-obstacle problem
	(whose existence is guaranteed by Lemma \ref{lem:solutions from capacity}).
	Then for all
	$y\in 3 B\setminus 2 B$,
	\[
	\ch_V^{\vee}(y)\le C_2 R \frac{\rcapa_1(W,4B)}{\frm(B)}
	\]
	for some finite constant $C_2=C_2(C_d,C_P,\lambda)\ge 1$.
\end{lemma}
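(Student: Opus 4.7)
The plan is to observe that $\chi_V^\vee$ takes only the values $0$ and $1$, so the claim is trivial unless $\chi_V^\vee(y)=1$; in that case we aim to establish a lower bound $\rcapa_1(W,4B)\gtrsim \frm(B)/R$. The strategy is to produce a substantial chunk of $V$ near $y$ at scale $R$ (using a component mass lower bound coming from Proposition \ref{prop:components are subminimizers}) and then invoke the isoperimetric inequality \eqref{eq:isop inequality with zero boundary values} applied to $V\subset 4B$ to convert that volume lower bound into a perimeter lower bound, hence into a capacity lower bound.

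Assume therefore $\chi_V^\vee(y)=1$. The key technical step is to check that $\chi_V$ is a $1$-subminimizer on $\Om:=4B\setminus\overline{B}$ (so that Proposition \ref{prop:components are subminimizers} becomes applicable). For any nonpositive $\varphi\in\BV_c(\Om)$, the competitor $\max(\chi_V+\varphi,0)$ is admissible for the obstacle problem (it lies in $\mathcal K_{\chi_W,0}(4B)$, since $W\subset B$ is disjoint from $\Om$), agrees with $\chi_V$ outside $\Om$, and has no larger total variation than $\chi_V+\varphi$ (truncation by a constant is $1$-Lipschitz, hence does not increase the $\BV$-seminorm). Thus $\chi_V$ is a $1$-subminimizer in $\Om$. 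Applying Theorem \ref{thm:superminimizers are lsc} to $-\chi_V$ (a superminimizer in $\Om$) shows $\chi_V^\vee$ is upper semicontinuous on $\Om$, so $F:=\{\chi_V^\vee=1\}\cap\Om$ is relatively closed in $\Om$ with $\chi_F=\chi_F^\vee$ and $\chi_F=\chi_V$ $\frm$-a.e. In particular $\chi_F$ is itself a $1$-subminimizer in $\Om$, and the point $y$ lies in some topological component $F_k$ of $F$.

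Direct computation gives $B(y,R/2)\subset\Om$: for $z\in B(y,R/2)$, the estimates $d(z,x)\ge d(y,x)-d(z,y)>2R-R/2>R$ and $d(z,x)<d(y,x)+d(z,y)<3R+R/2<4R$ show $z\in 4B\setminus\overline B$. Since $R/2<\tfrac14\diam X$, Proposition \ref{prop:components are subminimizers} applied to $F_k$ and $B(y,R/2)$ yields $\frm(F_k\cap B(y,R/2))\ge C_1^{-1}\frm(B(y,R/2))$. As $B\subset B(y,8R)$, iterating the doubling property gives $\frm(B(y,R/2))\ge c_0\frm(B)$ for some $c_0=c_0(C_d)>0$, so that $\frm(V)\ge \frm(F_k\cap B(y,R/2))\ge c_0 C_1^{-1}\frm(B)$.

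To conclude, combine this with the perimeter bound $P(V,X)\le\rcapa_1(W,4B)$ from Lemma \ref{lem:solutions from capacity} and the isoperimetric inequality \eqref{eq:isop inequality with zero boundary values} applied to $V\subset 4B$ (valid since $4R<\tfrac14\diam X$):
\[
c_0 C_1^{-1}\,\frm(B)\;\le\;\frm(V)\;\le\; 4C_S R\,P(V,X)\;\le\; 4C_S R\,\rcapa_1(W,4B).
\]
Rearranging yields the desired bound with $C_2:=4C_S C_1/c_0$. I expect the main obstacle to be the first step, namely upgrading the obstacle-problem minimality to a genuine $1$-subminimizer property in the annulus $\Om$, since Proposition \ref{prop:components are subminimizers} is stated in those terms rather than for obstacle-problem solutions; the truncation argument above is the cleanest way to bridge the gap. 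The remaining steps are a straightforward concatenation of the components mass-lower-bound with the global isoperimetric estimate.
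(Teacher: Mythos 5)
Your overall plan is sound and most of the steps are correct. The observation that $\ch_V^\vee$ is $\{0,1\}$-valued cleanly reduces the claim to: if $\ch_V^\vee(y)=1$ for some $y\in 3B\setminus 2B$, then $\rcapa_1(W,4B)\gtrsim\frm(B)/R$. Your verification that $\ch_V$ is a $1$-subminimizer in $\Om:=4B\setminus\overline{B}$ is correct: for nonpositive $\varphi\in\BV_c(\Om)$ the truncated competitor $v:=\max(\ch_V+\varphi,0)$ lies in $\mathcal K_{W,0}(4B)$ (since $\supp\varphi\Subset\Om$ is disjoint from $W\subset B$ and from $X\setminus 4B$), agrees with $\ch_V$ on the open set $X\setminus\supp\varphi$, and by $1$-Lipschitz truncation $\|Dv\|(\supp\varphi)\le\|D(\ch_V+\varphi)\|(\supp\varphi)$; subtracting $\|D\ch_V\|(X\setminus\supp\varphi)=\|Dv\|(X\setminus\supp\varphi)$ from the optimality inequality $\|Dv\|(X)\ge\|D\ch_V\|(X)$ gives exactly the subminimizing inequality. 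The identification of $F=\{\ch_V^\vee=1\}\cap\Om$ with a relatively closed representative satisfying $\ch_F=\ch_F^\vee$, the geometry $B(y,R/2)\subset\Om$, the doubling estimate $\frm(B(y,R/2))\gtrsim\frm(B)$, and the final chain through the isoperimetric inequality \eqref{eq:isop inequality with zero boundary values} and $P(V,X)\le\rcapa_1(W,4B)$ all check out. Since the paper only cites this result from \cite{L-Ftype}, I cannot compare directly with the source's argument, but your route is plausible as a self-contained proof.

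There is, however, one real gap that you do not address: Proposition \ref{prop:components are subminimizers} carries the explicit hypothesis ``Suppose $X$ is geodesic,'' whereas Lemma \ref{lem:smallness in annuli} carries no such hypothesis, and neither does Theorem \ref{thm:strong Cartan property}, where it is used. You cannot dispose of this by the paper's usual device (bi-Lipschitz change of metric to a geodesic one, as in Lemma \ref{lem:invariance}), because your standing hypothesis -- that $V$ solves the $\mathcal K_{W,0}(4B)$-obstacle problem -- is tied to the total variation functional for the \emph{original} metric and is not preserved under a bi-Lipschitz change; nor is the $1$-subminimizer property you derive from it. So as written, your argument proves the lemma only under an extra geodesicity assumption. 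To close the gap you either need a geodesicity-free version of the component-mass lower bound in Proposition \ref{prop:components are subminimizers} (the fact that its constant $C_1$ is recorded as depending on $\lambda$ hints that such a version may exist, but you would need to argue it), or you should replace that step with an argument that works in a general PI space, e.g.\ a weak Harnack / De Giorgi iteration estimate for the $1$-subminimizer $\ch_V$ on $\Om$. Incidentally, the step you flagged as the likely obstacle (upgrading obstacle-problem minimality to the subminimizer property) is actually fine; the geodesicity issue is the one that needs work.
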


\textbf{Standing assumptions:} We will assume  throughout the paper
that $(X,d,\frm)$ is a complete metric space 
consisting of at least two points, and
is equipped with the doubling Radon measure $\frm$ and supports a
$1$-Poincar\'e inequality.

\section{Constructing a metric measure space}\label{sec:contruct space}

In order to prove our main results on essential connected components and the $1$-fine topology,
it turns out to be useful, given a set $V$ that is $1$-thin at $x$ and a small radius $r>0$, to consider
$B(x,r)\setminus V$ as a metric space in its own right, equipped with a new metric.
Techniques developed in \cite{L-Ftype} prove useful here.

In a metric space $(Z,\widehat{d})$,
we define the \textbf{Mazurkiewicz metric} (connectedness here is of course with respect to the metric $\widehat{d}$-topology)
\begin{equation}\label{eq:widehat d c}
	\widehat{d}_M(x,y):=\inf\{\diam F:\,F\subset Z\textrm{ is a connected set containing }x,y\},
	\quad x,y\in Z,
\end{equation}
where $\diam F:=\sup\{\widehat{d}(z,w)\colon z,w\in F\}$.
Note that if the space $(Z,\widehat{d})$ is geodesic, then
$\widehat{d}=\widehat{d}_M$, while the converse does not generally hold
(consider $\R$ equipped with the metric $\widehat{d}(x,y)=|x-y|^{\beta}$,
$\beta\in (0,1)$).

The following proposition shows the existence of a point on the \emph{strong boundary}.
In \cite[Corollary 3.8]{L-Ftype}, the result was shown with the assumption that $\nu$ is (globally) doubling,
but from the proof it is clear that it is enough to have a slightly weaker
doubling assumption, as below.
Moreover, in \cite{L-Ftype} it was assumed that $Z$ is complete,
but this can be replaced with the assumption that $\overline{B}(y,6r)$ is compact.

\begin{proposition}\label{prop:density points}
	 Consider a metric measure space $(Z,\widehat{d},\nu)$, where $\nu$ is a
	Borel regular outer measure.
	Suppose
    $\widehat{d}_M=\widehat{d}$.
	Let $y\in Z$, $r>0$, and let $E\subset Z$ be a $\nu$-measurable set
	such that
	\[
	0< \nu(E\cap B(y,r))<\nu(B(y,r)).
	\]
	Suppose $\overline{B}(y,6r)$ is compact, and that $\nu$ has the doubling property
    \[
    \nu(B(x,2s))\le \widehat{C}_d\nu(B(x,s))
    \]
    whenever $x\in B(y,6r)$ and $0<s\le r$.
	Then there exists a point $x\in B(y,6 r)$ such that
	\begin{equation}\label{eq:strong boundary point}
		\frac{1}{4 \widehat{C}_d^{12}}\le \liminf_{s\to 0}\frac{\nu(E\cap B(x,s))}{\nu(B(x,s))}
		\le \limsup_{s\to 0}\frac{\nu(E\cap B(x,s))}{\nu(B(x,s))}
		\le 1-\frac{1}{4 \widehat{C}_d^{12}}.
	\end{equation}
\end{proposition}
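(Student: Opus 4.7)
The plan is a proof by contradiction, following the scheme of \cite[Corollary 3.8]{L-Ftype}, with the observation that only the local doubling on $\overline{B}(y,6r)$ and the compactness of $\overline{B}(y,6r)$ (replacing completeness of $Z$) are used. Suppose no $x \in B(y,6r)$ satisfies \eqref{eq:strong boundary point}. I will construct a sequence of balls $B_k = B(z_k, s_k)$ with $s_k$ geometrically decreasing and
\[
g(z_k, s_k) := \frac{\nu(E \cap B(z_k, s_k))}{\nu(B(z_k, s_k))} = \tfrac12,
\]
and show that its Cauchy limit point $x^*$ contradicts the assumption.

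For the base step, the hypothesis $0 < \nu(E \cap B(y,r)) < \nu(B(y,r))$ combined with the Lebesgue differentiation theorem (valid under local doubling) yields $x_1, x_2 \in B(y,r)$ with $\Theta(E, x_1) = 1$ and $\Theta(E, x_2) = 0$. Applying the Mazurkiewicz condition $\widehat{d}_M = \widehat{d}$, I obtain a connected $F_0 \ni x_1, x_2$ with $\diam F_0 < 3r$, so that $\overline{F_0} \subset \overline{B}(y, 4r)$ is compact. For small $s_0$ outside the at most countable set of radii at which $x \mapsto \nu(B(x, s_0))$ is discontinuous on $\overline{F_0}$, the function $x \mapsto g(x, s_0)$ is continuous on the connected compact $\overline{F_0}$. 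Since $g(x_1, s_0) \to 1$ and $g(x_2, s_0) \to 0$ as $s_0 \to 0$, the intermediate value theorem produces $z_0 \in F_0$ with $g(z_0, s_0) = 1/2$. The inductive step replicates this construction inside $B_k$: the positivity of $\nu(E \cap B_k)$ and $\nu(B_k \setminus E)$ furnishes density points $w_1, w_2 \in B_k$, which via Mazurkiewicz and the IVT yield $z_{k+1} \in F_k \subset B(z_k, 4 s_k)$ with $g(z_{k+1}, s_{k+1}) = 1/2$ for some $s_{k+1} < s_k/2$. Then $\widehat{d}(z_k, z_{k+1}) < 4 s_k$, so $\{z_k\}$ is Cauchy with limit $x^* \in \overline{B}(y, 4r) \subset B(y, 6r)$ by compactness.

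To conclude, given an arbitrarily small $\rho > 0$, pick $k$ such that $s_k \in [\rho/C, \rho/(2C)]$ for a constant $C$ large enough that $B_k \subset B(x^*, \rho)$ and $B(x^*, \rho) \subset B(z_k, C' s_k)$. Iterating local doubling a bounded (at most twelve) number of times gives $\nu(B(x^*, \rho)) \le \widehat{C}_d^{12} \nu(B_k)$. Combined with $\nu(E \cap B_k) = \nu(B_k \setminus E) = \nu(B_k)/2$, this yields
\[
\min\bigl\{\nu(E \cap B(x^*, \rho)),\ \nu(B(x^*, \rho) \setminus E)\bigr\} \ge \tfrac{1}{2}\,\nu(B_k) \ge \tfrac{1}{2 \widehat{C}_d^{12}}\,\nu(B(x^*, \rho)),
\]
valid uniformly for every small $\rho$, contradicting the failure of \eqref{eq:strong boundary point} at $x^*$.

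The main obstacle is the inductive step: the centers $z_{k+1}$ can drift slightly outside $B_k$, so the nesting is only approximate, and one must carefully check that the aggregate displacement $\sum_k \widehat{d}(z_k, z_{k+1})$ sums to a controlled multiple of $s_0$, ensuring that $x^*$ lies comfortably inside $B(y,6r)$ (not merely in its closure) and that every comparison ball has center in $B(y, 6r)$ and radius $\le r$, where local doubling applies. The exponent $12$ in $\widehat{C}_d^{12}$ is a convenient upper bound on the number of doubling iterations absorbed in the final scale comparison between $s_k$ and $\rho$ around $x^*$.
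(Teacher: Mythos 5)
Note first that the paper itself does not prove this proposition from scratch; it cites \cite[Corollary 3.8]{L-Ftype} and simply records that the proof there adapts to the weakened (local) doubling and compactness assumptions, so your blind attempt is a reconstruction of that argument. Your outline (Mazurkiewicz connectedness plus a zoom-in iteration and a limit point) is the right skeleton, but there are two genuine gaps. First, the intermediate-value step: you claim that $x\mapsto\nu(B(x,s_0))$ is continuous on $\overline{F_0}$ for all $s_0$ outside ``the at most countable set of radii'', but that set is only known to be countable for each fixed $x$; the union over the (uncountable) set $\overline{F_0}$ need not be countable, so one cannot simply choose $s_0$ to avoid it. The standard fixes are to replace $g(x,s)$ by a scale-averaged quantity such as $\int_0^s\nu(E\cap B(x,t))\,\dd t\big/\int_0^s\nu(B(x,t))\,\dd t$, which is genuinely continuous in $x$, or to avoid IVT altogether and use semicontinuity of $\nu(E\cap B(\cdot,s))$ and $\nu(\overline{B}(\cdot,s))$ to produce two disjoint relatively open pieces of $F_0$ and invoke connectedness. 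In either case you get $g(z,s)\in[c\widehat{C}_d^{-1},\,1-c\widehat{C}_d^{-1}]$ rather than exactly $1/2$, which is the reason the proposition carries a $1/4$ rather than a $1/2$.

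Second, and more seriously, the final step requires bounding the density of $E$ in $B(x^*,\rho)$ for \emph{every} small $\rho$, and your plan ``pick $k$ such that $s_k\in[\rho/C,\rho/(2C)]$'' (the interval is written backwards, but more importantly) presupposes that the scales $s_k$ decrease at a \emph{bounded} geometric rate, i.e.\ $s_k/s_{k+1}\le D$ for some fixed $D$. Your inductive step only furnishes $s_{k+1}<s_k/2$, an upper bound with no lower counterpart: since $w_1,w_2$ are genuine Lebesgue density points, the threshold below which their densities are close to $1$ and $0$ may be far smaller than $s_k/2$, so $s_{k+1}$ can collapse. When there is a $\rho$ strictly between $s_{k+1}$ and $s_k$ with $\rho/s_{k+1}$ unbounded, neither $B_k$ nor $B_{k+1}$ is commensurate with $B(x^*,\rho)$ and the doubling comparison fails. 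To close this gap you should work at a fixed scale $s_k/C'$ at each stage: a covering argument inside $B_k$ (using that both $\nu(E\cap B_k)$ and $\nu(B_k\setminus E)$ are a definite fraction of $\nu(B_k)$) produces $w_1,w_2\in B_k$ with high and low density of $E$ already at scale $s_k/C'$, not just in the limit, and then the Mazurkiewicz/IVT step is applied at that scale. This forces $s_{k+1}=s_k/C'$ with $C'$ depending only on $\widehat{C}_d$; the exponent $12$ in $\widehat{C}_d^{12}$ is precisely the bookkeeping of that fixed ratio and the associated number of doubling steps, and your proposal as written does not actually earn it.
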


In our setting, the space $Z$ (as a set) will be a connected set $F\subset X$.
The metric $\widehat{d}$ will itself be the Mazurkiewicz metric (connectedness is in the metric $d$-topology)
\[
d_{M}^F(x,y):=\inf\{\diam K: K\subset F\textrm{ is a connected set containing }x,y\},
\quad x,y\in F.
\]

The following is a variant of \cite[Lemma 6.1]{L-Ftype}.

\begin{lemma}\label{lem:new metric lemma}
	Let $B(x_0,R)\subset X$ be a ball and let $F\subset B(x_0,R)$ be a connected set that is relatively closed in
	$B(x_0,R)$.
	Moreover, suppose that
	for every $x\in F$ and $r>0$ such that $B(x,r)\subset B(x_0,R)$,
	the connected components of $B(x,r)\cap F$
	intersecting $B(x,r/2)$ are finite in number.
	
	Then $d_M^F$ is a metric on $F$ such that $d\le d_M^F$,
	$d_M^F$ induces the same topology on $F$ as $d$, and $(d_M^F)_M=d_M^F$.
\end{lemma}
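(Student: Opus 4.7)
The plan is to verify each assertion in turn: finiteness and the metric axioms for $d_M^F$, the bound $d\le d_M^F$, the topological equivalence of $d$ and $d_M^F$ on $F$, and finally the idempotency $(d_M^F)_M=d_M^F$. For finiteness, since $F$ itself is a connected subset of $F$ containing any two prescribed points and has $d$-diameter at most $2R$, we get $d_M^F\le 2R<\infty$. Symmetry is immediate and $d_M^F(x,x)=0$ follows by taking $K=\{x\}$. For the triangle inequality, given $\varepsilon>0$ I would take connected $K_1\ni x,y$ and $K_2\ni y,z$ in $F$ with $\diam K_i$ close to $d_M^F(x,y)$ and $d_M^F(y,z)$ respectively; then $K_1\cup K_2$ is connected (they share $y$), contains $x,z$, and for $a\in K_i$, $b\in K_j$ the estimate $d(a,b)\le d(a,y)+d(y,b)\le \diam K_1+\diam K_2$ gives $\diam(K_1\cup K_2)\le \diam K_1+\diam K_2$. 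The bound $d\le d_M^F$ is automatic since $x,y\in K$ forces $d(x,y)\le\diam K$; in particular $d_M^F(x,y)=0$ implies $x=y$.

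The main obstacle is the topology statement, specifically that $d$-convergence in $F$ implies $d_M^F$-convergence; the reverse implication is immediate from $d\le d_M^F$. Here I would use the finite-components hypothesis in the following way. Fix $x\in F$ and $\varepsilon>0$; pick $r<\varepsilon/2$ small enough that $B(x,r)\subset B(x_0,R)$. By hypothesis, the components $C_1,\dots,C_k$ of $B(x,r)\cap F$ that meet $B(x,r/2)$ are finite in number; label them so that $x\in C_1$. I claim that $B(x,\delta)\cap F\subset C_1$ for some $\delta>0$. Otherwise, by pigeonhole, some $C_j$ with $j\ne 1$ would contain a sequence $y_n\to x$; since components of a topological space are closed in that space, $C_j$ would be closed in $B(x,r)\cap F$ and hence would contain $x$, contradicting the disjointness of the components. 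With this $\delta$ fixed, any $y\in B(x,\delta)\cap F$ lies together with $x$ in the connected set $C_1\subset B(x,r)$, giving $d_M^F(x,y)\le \diam C_1\le 2r<\varepsilon$.

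For the final identity $(d_M^F)_M=d_M^F$, the key remark is that connectedness is a topological notion, so by the topology equivalence just proved, the family of connected subsets of $F$ containing any two given points is the same in the $d$-topology and the $d_M^F$-topology. Since $d\le d_M^F$ yields $\diam_d K\le\diam_{d_M^F}K$, taking infima gives $d_M^F\le (d_M^F)_M$. For the reverse inequality, given $\varepsilon>0$ I would choose a connected $K\subset F$ with $x,y\in K$ and $\diam_d K\le d_M^F(x,y)+\varepsilon$; then for any $z,w\in K$, the set $K$ itself witnesses $d_M^F(z,w)\le \diam_d K$, whence $\diam_{d_M^F}K\le \diam_d K\le d_M^F(x,y)+\varepsilon$, and letting $\varepsilon\to 0$ concludes.
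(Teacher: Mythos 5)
Your proposal is correct and takes essentially the same approach as the paper: the same verification of the metric axioms via unions of connected sets, the same use of the finite-components hypothesis to show that a small $d$-ball around $x$ in $F$ lies in a single component of $B(x,r)\cap F$ (the paper phrases this as each component not containing $x$ being at positive distance from $x$, using relative closedness, while you use pigeonhole plus closedness of components in the subspace --- the same idea), and the same argument for $(d_M^F)_M=d_M^F$.
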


Note that explicitly, for $x,y\in F$,
\[
(d_{M}^F)_M(x,y)=
\inf\{\diam_{d_M^F} K:\, K\subset F
\textrm{ is a }d_M^F\textrm{-connected set containing }x,y\}.
\]
\begin{proof}
	We check that $d_M^F$ is a metric.
	We obviously have for all $x,y\in F$ that
	\[
	d_M^F(x,y)\le \diam F<\infty.
	\]
	Obviously $d\le d_M^F$ and
	$d_M^F(x,x)=0$ for all $x\in F$.
	If $d_M^F(x,y)=0$ then $d(x,y)=0$ and so $x=y$. Obviously
	also $d_M^F(x,y)=d_M^F(y,x)$ for all $x,y\in F$.
	Finally, take $x,y,z\in F$.
	Take a connected set $K_1\subset F$ containing $x,y$ and a connected set 
	$K_2\subset F$ containing $y,z$.
	Then $K_1\cup K_2\subset F$ is a connected set containing $x,z$
	and so
	\[
	d_M^F(x,z)\le \diam(K_1\cup K_2)\le \diam(K_1)+\diam (K_2).
	\]
	Taking the infimum over $K_1$ and $K_2$, we conclude that the triangle inequality holds.
	Hence $d_M^F$ is a metric on $F$.
	
	To show that the topologies induced on $F$ by $d$ and $d_M^F$ are the same,
	take a sequence $x_j\to x$ with respect to $d$ in $F$.
	Fix an arbitrary $\eps>0$ sufficiently small that $B(x,\eps/2)\subset B(x_0,R)$. Consider the
	components of $B(x,\eps/2)\cap F$ intersecting $B(x,\eps/4)$.
	By assumption there are only finitely many.
	Each of them not containing $x$ is at a nonzero distance from $x$ (since $F$ is relatively closed) and
	so for large $j$, every $x_j$ belongs to the component containing $x$; denote it by $F_1$.
	For such $j$, we have
	\[
	d_M^F(x_j,x)\le \diam F_1\le \eps. 
	\]
	We conclude that $x_j\to x$ also with respect to $d_M^F$.
	Since we had $d\le d_M^F$, it follows that the topologies are the same.
	
	If $x,y\in F$, and $\eps'>0$, we can take a connected set  $K$
	containing $x$ and $y$, with $\diam K<d_M^F(x,y)+\eps'$.
	Since the topologies are the same, the set $K$ is still a connected set
    in the metric space $(F,d_M^F)$, and for every $z,w\in K$,
	\[
	d_M^F(z,w)\le \diam K< d_M^F(x,y)+\eps'.
	\]
	It follows  that $\diam_{d_M^F} K\le d_M^F(x,y)+\eps'$,
	and so $(d_M^F)_M(x,y)\le d_M^F(x,y)+\eps'$, showing that $(d_M^F)_M=d_M^F$.
\end{proof}

We will rely on the following \emph{Cartan property}, which is of independent interest.
Since its proof is quite long, we will postpone it as well as some discussion on the property to
Section \ref{sec:Cartan}.

Recall that by our standing assumptions, we have $\frm(X)>0$ (possibly $\infty$),
and $\frm(\{x\})=0$ for every $x\in X$ (see e.g.\ \cite[Corollary 3.9]{BB}).

\begin{theorem}\label{thm:strong Cartan property}
	Let $W\subset X$ be open and let $x\in X\setminus W$ such that $W$
	is $1$-thin at $x$.
	Choose $0<R<\frac{1}{32} \diam X$ sufficiently small that
	\begin{equation}\label{eq:space measure}
	\frac{\frm(B(x,2R))}{\frm(X)}<C_0^{-1}
	\end{equation}
and
	\begin{equation}\label{eq:small capacity}
	\sup_{0<r\le R}\frac{\rcapa_1(W\cap B(x,r),B(x,2r))}{\rcapa_1(B(x,r),B(x,2r))}<C_0^{-1},
	\end{equation}
	where
	\[
	C_0:=(20C_{SP} C_2 C_S C_d^{1+2\lceil\log_2(2\lambda)\rceil})^{2Q}.
	\]
	Denote $B:=B(x,R)$.
	Let $\ch_V=\ch^\wedge_{V}$ be a solution to the
	$\mathcal K_{W\cap B,0}(4B)$-obstacle problem (as guaranteed by Lemma \ref{lem:solutions from capacity}).
	Then $V$ is open, contained in $2B$,
	$\ch_V^{\wedge}=1$ in $W\cap B$,
	$\ch_V^{\vee}(x)=0$, and
	\begin{equation}\label{eq:V cap density zero}
	\lim_{r\to 0}\frac{\rcapa_1(V\cap B(x,r),B(x,2r))}{\rcapa_1(B(x,r),B(x,2r))}=0.
	\end{equation}
\end{theorem}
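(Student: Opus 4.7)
The plan is to verify the five conclusions in the order listed, treating the capacitary decay \eqref{eq:V cap density zero} as the heart of the matter. By Proposition \ref{prop:solutions are superminimizers} the function $\ch_V$ is a $1$-superminimizer in $4B$, so Theorem \ref{thm:superminimizers are lsc} makes $\ch_V^\wedge$ lower semicontinuous; since $\ch_V=\ch_V^\wedge$ is $\{0,1\}$-valued, the set $V=\{\ch_V^\wedge\geq 1\}$ is open. Moreover, the obstacle condition $\ch_V\geq \ch_{W\cap B}$ $\frm$-a.e., combined with the openness of $W\cap B$, pointwise forces $\ch_V^\wedge=1$ on $W\cap B$: any $y\in W\cap B$ admits small balls $B(y,r)\subset W\cap B$ whose $V$-density is full.

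For the containment $V\subset 2B$, I will apply Lemma \ref{lem:smallness in annuli} with obstacle $W\cap B\subset B$. Its conclusion $\ch_V^\vee(y)\leq C_2R\,\rcapa_1(W\cap B,4B)/\frm(B)$ on $3B\setminus 2B$, combined with monotonicity of $\rcapa_1$ in the outer set, hypothesis \eqref{eq:small capacity}, and the upper bound in \eqref{eq:cap estimate}, yields a bound of order $C_0^{-1}C_2C_d<1$ by our choice of $C_0$. Hence $\ch_V^\vee=0$ on the annulus, and $V$ misses it. A short minimality argument then excludes $V$ from $4B\setminus\overline{2B}$: since $V$ is open and misses $3B\setminus 2B$, it also misses $\partial 2B$, so $V\cap 2B$ and $V\setminus\overline{2B}$ have $\mathcal H$-disjoint measure-theoretic boundaries, giving $P(V)=P(V\cap 2B)+P(V\setminus\overline{2B})$ via Lemma \ref{lem:measure theoretic boundaries}(ii); the competitor $V\cap 2B$ is admissible, so minimality forces $P(V\setminus\overline{2B})=0$, hence (using \eqref{eq:space measure}) $\frm(V\setminus\overline{2B})=0$, and the openness of $V$ then collapses this set to the empty set. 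Finally, once the capacitary decay \eqref{eq:V cap density zero} is in hand, the vanishing $\ch_V^\vee(x)=0$ is immediate from \eqref{eq:thinness and measure thinness}.

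The main obstacle is the capacitary thinness \eqref{eq:V cap density zero}, which I will prove by a scale-by-scale perimeter comparison. For each $0<r\leq R$ the set $V_r:=(V\setminus B(x,r))\cup(W\cap B\cap B(x,r))$ lies in $\mathcal K_{W\cap B,0}(4B)$, so localizing the minimality of $V$ to $\overline{B(x,r)}$ gives
\[
\|D\ch_V\|(\overline{B(x,r)})\;\leq\;\|D\ch_{V_r}\|(\overline{B(x,r)})\;\leq\;P(W\cap B,B(x,r))+\text{(jump on }\partial B(x,r)\text{)}.
\]
A coarea/Fubini averaging over $r\in[\rho,2\rho]$ will absorb the jump term into $\rho^{-1}\frm((V\Delta(W\cap B))\cap B(x,2\rho))$, which in turn is controlled by further perimeter quantities via the isoperimetric inequality \eqref{eq:isop inequality with zero boundary values} and the Sobolev--Poincar\'e inequality. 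Inserting the test function $u:=\ch_V\cdot\eta$, with $\eta$ a Lipschitz cutoff between $B(x,\rho)$ and $B(x,2\rho)$, into \eqref{eq:variational one and BV capacity} together with a Leibniz-type estimate on $\|Du\|$ then yields
\[
\rcapa_1(V\cap B(x,\rho),B(x,2\rho))\;\lesssim\;P(V,B(x,2\rho))+\frac{\frm(V\cap B(x,2\rho))}{\rho}\;\lesssim\;\rcapa_1(W\cap B(x,C\rho),B(x,2C\rho))
\]
for a geometric constant $C\geq 1$. Dividing by $\rcapa_1(B(x,\rho),B(x,2\rho))\asymp\frm(B(x,\rho))/\rho$, using doubling to equate the scales $\rho$ and $C\rho$, and invoking \eqref{eq:small capacity} delivers the desired decay as $\rho\to 0$.

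The hardest step will be keeping these estimates sharp enough to match the threshold $C_0$: the Sobolev--Poincar\'e step brings in an exponent $(Q-1)/Q$ that must be iterated on the order of $2Q$ times to restore a polynomial-free comparison (accounting for the $2Q$ power in $C_0$), and the delicate control of the jump contribution on $\partial B(x,r)$ through the coarea averaging is the other technical crux of the argument.
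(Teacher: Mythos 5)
The easy conclusions---openness via lower semicontinuity of $\ch_V^\wedge$, and $V\subset 2B$ via Lemma \ref{lem:smallness in annuli}---are handled correctly and essentially as in the paper (one small caveat: you invoke Lemma \ref{lem:measure theoretic boundaries}(ii) to split the perimeter, but that lemma requires isotropy, which is \emph{not} among the hypotheses of Theorem \ref{thm:strong Cartan property}; here you should instead use the elementary locality of perimeter for sets at positive distance, which holds in any PI space). Your idea of comparing $V$ with the competitor obtained by replacing $V$ inside a small ball with (a solution over) the obstacle, and controlling the jump on the sphere by a coarea/Fubini averaging via Lemma \ref{lem:E minus ball}, is indeed the engine of the paper's proof.

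The genuine gap is in the central chain
\[
\rcapa_1(V\cap B(x,\rho),B(x,2\rho))\;\lesssim\;P(V,B(x,2\rho))+\frac{\frm(V\cap B(x,2\rho))}{\rho}\;\lesssim\;\rcapa_1(W\cap B(x,C\rho),B(x,2C\rho)).
\]
The first $\lesssim$ is fine (BV test function, \eqref{eq:variational one and BV capacity}), but the second is precisely what the theorem is trying to establish and does not follow from the competitor comparison alone. What minimality plus the competitor $D\cup(V\setminus B(x,s'))$ and the coarea averaging actually give is
\[
P(V,B(x,s/2))\;\lesssim\;\rcapa_1\bigl(W\cap B(x,s),B(x,2s)\bigr)+\frac{\frm(V\cap B(x,s))}{s},
\]
and the error term $\frm(V\cap B(x,s))/s$ is \emph{not} a priori controlled by the obstacle: if $\Theta^*(V,x)>0$, it stays comparable to $\frm(B(x,s))/s\asymp\rcapa_1(B(x,s),B(x,2s))$ along a subsequence, and the chain collapses. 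The paper's proof is organized exactly to circumvent this: it first rules out $\Theta^*(V,x)>0$ by a case analysis (distinguishing whether the density is above or below the threshold $C_0^{-1/2}$, and whether the $\liminf$ and $\limsup$ agree), deriving in each subcase a quantitative contradiction with minimality via the relative isoperimetric inequality \eqref{eq:relative isoperimetric inequality} and the set-surgery estimate; only once $\Theta(V,x)=0$ is established does it pass to a separate contradiction argument showing that the perimeter density also vanishes, and then it converts perimeter decay to capacity decay using Proposition \ref{prop:coarea}, Federer's characterization, and \eqref{eq:variational one and BV capacity}. Without this two-stage structure your proposed chain of inequalities is simply asserted. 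Your closing remark about iterating the Sobolev--Poincar\'e exponent $(Q-1)/Q$ roughly $2Q$ times also does not reflect how the constant $C_0$ actually enters: the $2Q$ power arises from choosing $C_0$ large enough that a single application of the relative isoperimetric inequality at a well-chosen scale already produces the contradiction, not from any iteration.
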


In essence, the theorem says that if the obstacle $W$ is $1$-thin at $x$, then so is the superminimizer $V$. 

Now we get the following result on constructing a metric measure space;
this is a variant of \cite[Proposition 6.6]{L-Ftype}.
We denote by $B_F(y,r)$ an open ball in $F$, defined with respect to the metric $d_M^F$.

When $X$ is geodesic, in the $1$-Poincar\'e inequality we can choose
$\lambda=1$, see e.g.\ \cite[Theorem 4.39]{BB}. This may change the constant $C_P$;
whenever we assume that $X$ is geodesic, by $C_P$ we mean the constant corresponding to the choice
$\lambda=1$.

\begin{proposition}\label{prop:constructing the quasiconvex space}
	Suppose $X$ is geodesic.
	Let $x_0\in X$, let $W\subset X$ be an open set that is $1$-thin at $x_0$,
    and choose $0<R<\frac{1}{32} \diam X$ sufficiently small that
    \[
	\frac{\frm(B(x_0,2R))}{\frm(X)}<C_0^{-1}
	\]
    and
	\[
	\sup_{0<r\le R}\frac{\rcapa_1(W\cap B(x_0,r),B(x_0,2r))}{\rcapa_1(B(x_0,r),B(x_0,2r))}
    <\frac{1}{C_0 C_1 C_d C_P}.
	\]
Denote $B:=B(x_0,R)$.
	Then we find an open set $V$ with
	$W\cap B\subset V\subset 2B$ and
the following hold:
	denote by $F$ the component of $B(x_0,R)\setminus V$ containing $x_0$;
	then $x_0\in \fint F$;
	\begin{equation}\label{eq:capacity of F}
		\rcapa_1(\tfrac 12 B\setminus F,B)\le 9C' C_r \rcapa_1(W\cap B,2B);
	\end{equation}
	the space $(F,d_M^F,\frm)$ is a locally complete metric space with
	$(d_M^F)_M=d_M^F$ and with the topology inherited from $(X,d,\mu)$,   
    $\frm$ in $F$ is a Borel regular outer measure
	with the doubling property
    \[
    \frm(B_F(y,2r))\le C_1 C_d^2\frm(B_F(y,r))
    \]
    whenever $B(y,r)\subset B(x_0,R)$ with $y\in F$,
	and for such balls $B(y,r)$ also
	\begin{equation}\label{eq:BF lower bound}
	\frac{\frm(B_F(y,r))}{\frm(B(y,r))}\ge (C_1 C_d)^{-1}.
	\end{equation}
\end{proposition}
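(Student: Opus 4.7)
My plan is to obtain $V$ from the obstacle solution furnished by Theorem~\ref{thm:strong Cartan property}, identify $F$ as a component of $B\setminus V_\ast$ via Proposition~\ref{prop:components are subminimizers}, and then verify the listed properties. First I would apply Theorem~\ref{thm:strong Cartan property} to $W$; the smallness hypotheses in the present statement are stronger than \eqref{eq:space measure}--\eqref{eq:small capacity}, so the Cartan property is available. This produces an open set $V_\ast\subset 2B$ arising as a solution $\ch_{V_\ast}=\ch_{V_\ast}^\wedge$ of the $\mathcal K_{W\cap B,0}(4B)$-obstacle problem, with $W\cap B\subset V_\ast$, $\ch_{V_\ast}^\vee(x_0)=0$, $V_\ast$ $1$-thin at $x_0$, and, by Lemma~\ref{lem:solutions from capacity}, $P(V_\ast,X)\le\rcapa_1(W\cap B,4B)$. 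By Proposition~\ref{prop:solutions are superminimizers}, $\ch_{V_\ast}$ is a $1$-superminimizer in $4B$, so $\ch_{F_0}:=1-\ch_{V_\ast}$ is a $1$-subminimizer in $B$; choosing the upper semicontinuous representative, the set $F_0:=B\setminus V_\ast$ is relatively closed in $B$ and contains $x_0$.

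Next I would apply Proposition~\ref{prop:components are subminimizers} to decompose $F_0=\bigsqcup_j F_j$ into positive-measure components satisfying the uniform density bound, and denote by $F$ the component containing $x_0$. Setting
\[
V:=B\setminus F,
\]
this $V$ is open in $X$ (as $F$ is relatively closed in the open set $B$), contained in $2B$, and contains $V_\ast\supset W\cap B$; since $B\setminus V=F$ is connected and contains $x_0$, it is indeed the component of $B\setminus V$ through $x_0$. The perimeter chain $P(F,B)\le P(F_0,B)=P(V_\ast,B)\le\rcapa_1(W\cap B,4B)$ (first inequality by Proposition~\ref{prop:connected components}), combined with \eqref{eq:capacities} and the BV-capacity comparison \eqref{eq:variational one and BV capacity} applied to the test function $u:=\psi\,\ch_V$ for a Lipschitz cutoff $\psi$ equal to $1$ on $\tfrac12 B$ and vanishing outside $B$, yields \eqref{eq:capacity of F} with constant $9C'C_r$.

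The metric-space properties of $(F,d_M^F,\frm)$ follow from Lemma~\ref{lem:new metric lemma}, whose finite-components hypothesis is a second application of Proposition~\ref{prop:components are subminimizers} to the subminimizer $\ch_F$ on sub-balls of $B$. The density estimate \eqref{eq:BF lower bound} is obtained by applying Proposition~\ref{prop:components are subminimizers} to $F$ on the ball $B(y,r/2)$: the component of $F\cap B(y,r/2)$ through $y$ is a connected subset of $F$ of diameter at most $r$, hence lies in $B_F(y,r)$, and has measure at least $C_1^{-1}\frm(B(y,r/2))\ge(C_1 C_d)^{-1}\frm(B(y,r))$; the doubling of $\frm$ on $(F,d_M^F)$ then follows from this bound and the doubling on $X$.

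The principal obstacle is verifying $x_0\in\fint F$, equivalently, that $V=V_\ast\cup\bigsqcup_{j\neq F}F_j$ is $1$-thin at $x_0$. While $V_\ast$ is $1$-thin by the Cartan property, a closed component $F_j$ with $j\neq F$ could a priori accumulate at $x_0$. If it did, the density estimate would force both $F$ and $F_j$ to have measure density at least $C_1^{-1}$ at $x_0$, placing $x_0\in\partial^\ast F\cap\partial^\ast F_j\subset\partial^\ast V_\ast$; the relative isoperimetric inequality \eqref{eq:relative isoperimetric inequality} would then give $P(V_\ast,B(x_0,2\lambda r))\gtrsim\frm(B(x_0,r))/r$ on all small scales, incompatible with the strengthened smallness of the capacity ratio required in the hypothesis. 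Once such accumulation is excluded, a localized BV test function $\psi\,\ch_V$ combined with \eqref{eq:variational one and BV capacity} converts the perimeter and measure bounds on $V$ near $x_0$ into the required capacity density estimate, completing the verification that $V$ is $1$-thin at $x_0$.
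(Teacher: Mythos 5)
Your overall architecture is sound and matches the paper's: invoke the Cartan property (Theorem \ref{thm:strong Cartan property}), extract a superminimizer $V_\ast$ solving the $\mathcal K_{W\cap B,0}(4B)$-obstacle problem, decompose $B\setminus V_\ast$ into components via Proposition \ref{prop:components are subminimizers}, take the component $F$ through $x_0$, and feed the finite-component property into Lemma \ref{lem:new metric lemma} and the doubling/lower-bound computations. Your derivation of \eqref{eq:BF lower bound} and of the doubling bound is essentially the paper's. The choice $V:=B\setminus F$ rather than $V:=V_\ast$ is a cosmetic deviation and not in itself a problem.

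The genuine gap is in how you treat the other components $F_j$ with $j\neq 1$. The paper's crucial intermediate step is to show that \emph{$F$ is the only component of $B\setminus V_\ast$ meeting $B(x_0,R/2)$}, so that $B(x_0,R/2)\cap F = B(x_0,R/2)\setminus V_\ast$. This is proved by contradiction: if two components met $B(x_0,R/2)$, each would have measure $\ge C_1^{-1}\frm(B)$ by Proposition \ref{prop:components are subminimizers}, and then Proposition \ref{prop:connected components} together with the Poincar\'e inequality (with $\lambda=1$) forces
\[
P(X\setminus V_\ast,B)=\sum_j P(F_j,B)\ge C_1^{-1}C_P^{-1}\frac{\frm(B)}{R},
\]
contradicting $P(V_\ast,X)\le \rcapa_1(W\cap B,2B) < \frac{1}{C_0C_1C_dC_P}\rcapa_1(B,2B)\le \frac{C_d}{C_0C_1C_dC_P}\frac{\frm(B)}{R}$; this is exactly why the hypothesis has the extra factor $\frac{1}{C_1C_dC_P}$ beyond what Theorem \ref{thm:strong Cartan property} needs. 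You never establish this, and both of the remaining items depend on it: (i) the claim $x_0\in\fint F$ then follows immediately because $F$ agrees with $X\setminus V_\ast$ near $x_0$ and $V_\ast$ is $1$-thin at $x_0$; and (ii) the capacity estimate \eqref{eq:capacity of F} uses $\tfrac12 B\setminus F=\tfrac12 B\cap V_\ast$, whereas with your $V$, $\tfrac12 B\setminus F$ a priori also contains pieces of the other $F_j$, whose capacity contribution you have no control over. Your cutoff-function sketch does not address this.

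Your substitute argument for $x_0\in\fint F$ also contains a faulty step. You assert that if some $F_j$ accumulated at $x_0$, then the density bounds would place $x_0\in\partial^\ast F\cap\partial^\ast F_j\subset\partial^\ast V_\ast$. The inclusion $\partial^\ast F\cap\partial^\ast F_j\subset\partial^\ast V_\ast$ is not a general fact (it is essentially a two-sidedness-type statement, not available here), and at the point $x_0$ it is actually \emph{false}: since $V_\ast$ is $1$-thin at $x_0$, \eqref{eq:thinness and measure thinness} gives $\Theta(V_\ast,x_0)=0$, so $x_0\notin\partial^\ast V_\ast$. Moreover, even were this repaired, ruling out accumulation of $F_j$ only at the single point $x_0$ would still not yield \eqref{eq:capacity of F}, for which you need all other components to miss $\tfrac12 B$. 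The clean route is the paper's: prove the uniqueness of the component meeting $B(x_0,R/2)$ first, and the rest follows.
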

	
\begin{proof}
	Take a solution $\ch_V=\ch^{\wedge}_V$ of the
	$\mathcal K_{W\cap B,0}(4B)$-obstacle problem.
	By Theorem \ref{thm:strong Cartan property} we have that $V\subset 2B$ is open, and
	$V$ is $1$-thin at $x_0$, so that by \eqref{eq:fine int}, also
	\begin{equation}\label{eq:x in fine interior}
	x_0\in \fint(X\setminus V).
	\end{equation}
	By Lemma \ref{lem:solutions from capacity}, we also know that
	\begin{equation}\label{eq:V and W}
	P(V,X)\le \rcapa_1(W\cap B,4B)\le \rcapa_1(W\cap B,2B)
    <\frac{\rcapa_1(B,2B)}{C_0 C_1 C_d C_P}.
	\end{equation}
Moreover, by \eqref{eq:variational one and BV capacity} we get
\begin{equation}\label{eq:capacity of V}
	\rcapa_1(V,4B)
	\le C_r \rcapa_{\BV}^{\vee}(V,4B)
	\le C_r P(V,X).
\end{equation}
	Denote by $F$ the connected component of $B\setminus V$ containing $x_0$;
	$F$ is a relatively closed subset of $B$.
    By Proposition \ref{prop:solutions are superminimizers} and Proposition \ref{prop:components are subminimizers}
    (with $\Om=B(x_0,R)$)
	we know that $\ch_F=\ch_F^{\vee}$ is a $1$-subminimizer in $B(x_0,R)$, and
	$B(x_0,R)\setminus V$ is the union of its connected components $\bigcup_{j}F_j$, where we can choose $F=F_1$,
	such that each component
	$F_j$ intersecting $B(x_0,R/2)$ satisfies
	\[
		\frac{\frm(F_j)}{\frm(B(x_0,R))}\ge C_1^{-1},
	\]
	and thus there are fewer than $C_1+1$ such components.
	Suppose there is at least one other such component apart from $F_1$.
	By Proposition \ref{prop:connected components} and the relative isoperimetric inequality, we know that
    (recall that in the Poincar\'e inequality we now have $\lambda=1$)
	\begin{align*}
	P(X\setminus V,B(x_0,R))
	&=\sum_{j}P(F_j,B(x_0,R))\\
	&\ge C_1^{-1}C_P^{-1}\frac{\frm(B(x_0,R))}{R},
	\end{align*}
	contradicting \eqref{eq:V and W} and \eqref{eq:cap estimate}.
	Hence
	\begin{equation}\label{eq:F and V}
	B(x_0,R/2)\cap F=B(x_0,R/2)\setminus V.
	\end{equation}
	Then from \eqref{eq:x in fine interior} we get the fact that $x_0\in \fint F$.
	Moreover,
	\begin{align*}
	\rcapa_1(\tfrac 12 B\setminus F,B)
	&\le 9C'\rcapa_1(\tfrac 12 B\setminus F,4B)\quad \textrm{by }\eqref{eq:capacities}\\
	&= 9C'\rcapa_1(\tfrac 12 B\cap V,4B)\quad \textrm{by }\eqref{eq:F and V}\\
	&\le 9C' C_r \rcapa_1(W\cap B,2B)
	\quad \textrm{by }\eqref{eq:capacity of V},\eqref{eq:V and W},
	\end{align*}
	which is \eqref{eq:capacity of F}.

Suppose $y\in F$ and $0<r\le R$ is small enough that $B(y,r)\subset B(x_0,R)$.
	We know that $\ch_F$ is a $1$-subminimizer in $B(x_0,R)$, and then 
	also in $B(y,r)\subset B(x_0,R)$.
	By another application of Proposition \ref{prop:components are subminimizers}  we know that
	$F\cap B(y,r)$ is the union of its connected components $\bigcup_{j}F_j'$,
	such that each component
	$F_j'$ intersecting $B(y,r/2)$ satisfies
	\begin{equation}\label{eq:subminimizer component measure lower bound}
		\frac{\frm(F_j')}{\frm(B(y,r))}\ge C_1^{-1}
	\end{equation}
	and thus there are fewer than $C_1+1$ such components.
	Now Lemma \ref{lem:new metric lemma} gives that $(F,d_M^F,\frm)$
	is a locally complete metric space, $d\le d_M^F$, the topologies induced
	by $d$ and $d_M^F$ are the same and so the topology is inherited from $(X,d,\mu)$, 
    and $(d_M^F)_M=d_M^F$.
	Note that $\frm$ restricted to the set $F$ is still a Borel
	regular outer measure, see \cite[Lemma 3.3.11]{HKST15}.
	Since the topologies induced by $d$ and $d_M^F$
	are the same, $\frm$ remains a Borel regular outer measure in the metric measure space $(F,d_M^F,\frm)$.
	
	For any ball
	$B(y,r)\subset B(x_0,R)$ with $y\in F$,
	by \eqref{eq:subminimizer component measure lower bound} we have that
	\[
	\frac{\frm(B_F(y,2r))}{\frm(B(y,r))}\ge C_1^{-1},
	\]
	and so in fact
	\[
	\frac{\frm(B_F(y,2r))}{\frm(B(y,2r))}\ge (C_1 C_d)^{-1}.
	\]
	We conclude that whenever $B(y,r)\subset B(x_0,R)$, we have
	\[
	\frac{\frm(B_F(y,r))}{\frm(B(y,r))}\ge (C_1 C_d)^{-1},
	\]
	which is \eqref{eq:BF lower bound}.
    We also have
	\[
	\frac{\frm(B_F(y,2r))}{\frm(B_F(y,r))}\le C_1 C_d\frac{\frm(B(y,2r))}{\frm(B(y,r))}
	\le C_1 C_d^2,
	\]
	giving the desired doubling property.
\end{proof}

We have also the following variant of
Proposition \ref{prop:constructing the quasiconvex space}.

\begin{proposition}\label{prop:constructing the quasiconvex space alternative}
	Suppose $X$ is geodesic.
	Let $x_0\in X$, let $W\subset X$ be an open set,
	let $0<R<\frac{1}{32} \diam X$, and denote $B=B(x_0,R)$. Suppose
	\[
	\frac{\rcapa_1(W\cap B,2B)}{\rcapa_1(B,2B)}
	<\frac{1}{C_0 C_1 C_d C_P}.
	\]
	Then we find an open set $V$ with
	$W\cap B\subset V\subset 2B$ and
the following hold:
denote by $F$ the component of $B(x_0,R)\setminus V$ that has the largest $\frm$-measure
among components intersecting $B(x_0,R/2)$; then
\begin{equation}\label{eq:capacity of F 2}
\rcapa_1(\tfrac 12 B\setminus F,B)\le 9C' C_r \rcapa_1(W\cap B,2B);
\end{equation}
the space $(F,d_M^F,\frm)$ is a locally complete metric space with
$(d_M^F)_M=d_M^F$, $\frm$ in $F$ is a Borel regular outer measure
with the doubling property
\[
\frm(B_F(y,2r))\le C_1 C_d^2\frm(B_F(y,r))
\]
whenever $B(y,r)\subset B(x_0,R)$ with $y\in F$,
and for such balls $B(y,r)$ also
\[
\frac{\frm(B_F(y,r))}{\frm(B(y,r))}\ge (C_1 C_d)^{-1}.
\]
\end{proposition}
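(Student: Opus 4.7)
I would follow the proof of Proposition~\ref{prop:constructing the quasiconvex space} essentially verbatim, with two adjustments to account for the single-scale capacity bound replacing thinness: we cannot invoke the Cartan property (Theorem~\ref{thm:strong Cartan property}) to locate $V\subset 2B$, and there is no distinguished component attached to $x_0$, so the component $F$ must be selected by maximal measure. The proof begins by taking $\ch_V=\ch_V^{\wedge}$ a solution to the $\mathcal K_{W\cap B,0}(4B)$-obstacle problem (Lemma~\ref{lem:solutions from capacity}), which yields $W\cap B\subset V$, openness of $V$ (from lower semicontinuity of $\ch_V^{\wedge}$ in Theorem~\ref{thm:superminimizers are lsc}, which applies because $\ch_V$ is a $1$-superminimizer by Proposition~\ref{prop:solutions are superminimizers}), and $P(V,X)\le \rcapa_1(W\cap B,2B)$. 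To verify $V\subset 2B$ I would apply Lemma~\ref{lem:smallness in annuli}: for $y\in 3B\setminus 2B$ one has $\ch_V^{\vee}(y)\le C_2 R\,\rcapa_1(W\cap B,4B)/\frm(B)$, which by the hypothesis and \eqref{eq:cap estimate} is strictly below $1$; since $\ch_V=\ch_V^{\wedge}\le \ch_V^{\vee}\in[0,1]$, this gives $\ch_V(y)=0$ on $3B\setminus 2B$, and any residual component of $V$ in $4B\setminus 3B$ would be a positive distance from the obstacle $W\cap B$ and could be deleted to strictly decrease $P(V,X)$, contradicting minimality of $\ch_V$.

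Next, by Proposition~\ref{prop:solutions are superminimizers} the function $\ch_{F_0}=\ch_{F_0}^{\vee}$ is a $1$-subminimizer in $B(x_0,R)$ for $F_0:=B(x_0,R)\setminus V$. Applying Proposition~\ref{prop:components are subminimizers} at scale $B(x_0,R)$, every connected component of $F_0$ meeting $B(x_0,R/2)$ has measure at least $C_1^{-1}\frm(B(x_0,R))$, with at most $C_1$ such components. The isoperimetric inequality \eqref{eq:isop inequality with zero boundary values} applied to $V\subset 2B$ together with the capacity hypothesis and \eqref{eq:cap estimate} force $\frm(V)\ll\frm(B(x_0,R/2))$, so at least one such component exists. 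The existence of two or more would, via Proposition~\ref{prop:connected components} and the relative isoperimetric inequality (with $\lambda=1$ since $X$ is geodesic), produce $P(V,B(x_0,R))\ge C_1^{-1}C_P^{-1}\frm(B(x_0,R))/R$, contradicting the capacity bound. Hence exactly one such component $F$ exists; it is automatically the one of largest measure and satisfies $B(x_0,R/2)\setminus V=B(x_0,R/2)\cap F$. The estimate \eqref{eq:capacity of F 2} then follows from the chain
\[
\rcapa_1(\tfrac12 B\setminus F,B)\le 9C'\rcapa_1(\tfrac12 B\cap V,4B)\le 9C'C_r P(V,X)\le 9C'C_r\rcapa_1(W\cap B,2B),
\]
where the first step uses \eqref{eq:capacities} and the second step uses \eqref{eq:variational one and BV capacity}.

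Finally, the metric measure structure on $F$ is built exactly as in the original: Proposition~\ref{prop:components are subminimizers} at arbitrary smaller scales $B(y,r)\subset B(x_0,R)$ with $y\in F$ provides both the local finiteness of components needed by Lemma~\ref{lem:new metric lemma} (yielding $d_M^F$ with $d\le d_M^F$, the same topology as $d$, and $(d_M^F)_M=d_M^F$) and the lower bound $\frm(F\cap B(y,r))\ge C_1^{-1}\frm(B(y,r))$, which together with $F\cap B(y,2r)\subset B_F(y,2r)$ and the doubling of $\frm$ on $X$ gives \eqref{eq:BF lower bound} and the doubling $\frm(B_F(y,2r))\le C_1C_d^2\frm(B_F(y,r))$; local completeness and Borel regularity of $\frm$ on $(F,d_M^F)$ are inherited from $X$ via the topology agreement. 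I expect the main obstacle to be the verification $V\subset 2B$ in the absence of the Cartan property, which requires carefully combining Lemma~\ref{lem:smallness in annuli} with the truncation/minimality argument to rule out stray components of $V$ in the outer annulus $4B\setminus 3B$; all other steps are routine adaptations of the proof of Proposition~\ref{prop:constructing the quasiconvex space}.
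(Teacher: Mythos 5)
Your proposal is correct and follows essentially the same approach as the paper, which disposes of this proposition with a terse ``mutatis mutandis'' reference to Proposition~\ref{prop:constructing the quasiconvex space} plus a pointer to the first paragraph of the proof of Theorem~\ref{thm:strong Cartan property} for the inclusion $V\subset 2B$. You have correctly identified the two genuine adaptations required: deriving $V\subset 2B$ directly from Lemma~\ref{lem:smallness in annuli} and the truncation/minimality argument (rather than citing the Cartan theorem, whose thinness hypothesis is unavailable), and selecting $F$ by maximal measure (with a short check via the isoperimetric inequality~\eqref{eq:isop inequality with zero boundary values} that at least one admissible component exists, a point the paper leaves implicit). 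The remaining steps — uniqueness of the component intersecting $B(x_0,R/2)$ via Propositions~\ref{prop:connected components} and \ref{prop:components are subminimizers} and the relative isoperimetric inequality with $\lambda=1$, the chain giving~\eqref{eq:capacity of F 2} through~\eqref{eq:capacities}, \eqref{eq:variational one and BV capacity} and Lemma~\ref{lem:solutions from capacity}, and the metric-measure structure on $F$ from Lemma~\ref{lem:new metric lemma} — match the paper's intended argument exactly.
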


\begin{proof}
The proof is mutatis mutandis the same as that of Proposition \ref{prop:constructing the quasiconvex space},
except with the sentences referring to $\fint (X\setminus V)$ or $\fint F$ removed.
Note that we still have $V\subset 2B$, which follows exactly as in the first paragraph
of the proof of Theorem \ref{thm:strong Cartan property}.
\end{proof}

\section{The \texorpdfstring{$1$}{1}-fine topology is locally connected}\label{sec:locally connected}

In this section we show that the $1$-fine topology is locally connected.
In the previous sections, we have developed some of our theory under the assumption that $X$ is geodesic.
However, it is a standard fact that this assumption is not very restrictive.
Indeed, as \label{quasiconvex and geodesic}a complete metric space equipped with a doubling measure and supporting a Poincar\'e
inequality, $X$ is \emph{quasiconvex}, meaning that for every
pair of points $x,y\in X$ there is a curve $\gamma\colon [0,\ell_{\gamma}]\to X$
with $\gamma(0)=x$,
$\gamma(\ell_{\gamma})=y$, and $\ell_{\gamma}\le Cd(x,y)$,
where $C$
only depends on $C_d$ and $C_P$, see e.g.\ \cite[Theorem 4.32]{BB}.
Thus a bi-Lipschitz change in the metric gives a geodesic space that is still complete,
the measure is doubling, and the space supports a $1$-Poincar\'e inequality
(see \cite[Section 4.7]{BB}).

\begin{lemma}\label{lem:invariance}
The $1$-fine topology is invariant under a bi-Lipschitz change of the metric.
\end{lemma}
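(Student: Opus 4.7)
The plan is to argue directly that the defining $1$-thinness condition at every point is preserved under a bi-Lipschitz change of the metric, from which the invariance of the collection of $1$-finely open sets, and hence of the $1$-fine topology, is immediate. Fix a metric $d'$ on $X$ with $L^{-1}d\le d'\le L d$ for some $L\ge 1$; then $(X,d',\frm)$ is again a complete, doubling metric measure space supporting a $1$-Poincar\'e inequality (with possibly different constants), so the $1$-fine topology is well-defined for it. The basic ball inclusions are
\[
B_d(x,r/L)\subset B_{d'}(x,r)\subset B_d(x,Lr),\qquad x\in X,\ r>0.
\]

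First I would establish comparability of minimal $1$-weak upper gradients and $1$-capacities. Rectifiable curves coincide as subsets of $X$ in the two metrics, and their arc-length parametrizations differ by a reparametrization with Jacobian in $[L^{-1},L]$ a.e.; thus line integrals and $1$-moduli of curve families transform by at most a factor of $L$. Hence, for every function $u$, one has $L^{-1}g_u^d\le g_u^{d'}\le L g_u^d$ a.e. Since the admissibility conditions in the definition of the variational $1$-capacity are purely pointwise, this yields
\[
L^{-1}\rcapa_1^d(A,\Om)\le \rcapa_1^{d'}(A,\Om)\le L\,\rcapa_1^d(A,\Om)
\]
for every $A\subset\Om\subset X$.

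Next I would transfer the $1$-thinness condition at a point $x\in X$. The apparent difficulty is that the dilation factor $2$ in the definition of $1$-thinness is not large enough to absorb the bi-Lipschitz distortion $L^2$ inherent in the ball inclusions. To get around this I would first use \eqref{eq:capacities} and \eqref{eq:cap estimate} to observe that, in either metric, the limit
\[
\lim_{r\to 0}\frac{\rcapa_1(A\cap B(x,r),B(x,Tr))}{\rcapa_1(B(x,r),B(x,Tr))}
\]
is independent of the choice of $T>2$ (the ratios for different $T$ differ only by a multiplicative constant). Taking $T:=2L^2$ and exploiting the chain of inclusions
\[
B_{d'}(x,r)\subset B_d(x,Lr)\subset B_d(x,2Lr)\subset B_{d'}(x,Tr),
\]
set- and domain-monotonicity of $\rcapa_1^d$ yield
\[
\rcapa_1^d\bigl(A\cap B_{d'}(x,r),B_{d'}(x,Tr)\bigr)\le \rcapa_1^d\bigl(A\cap B_d(x,Lr),B_d(x,2Lr)\bigr),
\]
while \eqref{eq:cap estimate} together with doubling provide a matching lower bound of the form $\rcapa_1^{d'}(B_{d'}(x,r),B_{d'}(x,Tr))\gtrsim \rcapa_1^d(B_d(x,Lr),B_d(x,2Lr))$. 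Combining with the capacity comparison of the first step and letting $r\to 0$, one concludes that $1$-thinness of $A$ at $x$ in $d$ implies the $T$-version of $1$-thinness at $x$ in $d'$, hence standard $1$-thinness at $x$ in $d'$; the converse follows symmetrically.

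Once $1$-thinness at every point of $X$ has been shown to be bi-Lipschitz invariant, both the notion of $1$-finely open set (Definition~\ref{def:1 fine topology}) and the resulting $1$-fine topology are invariant as well. The main obstacle in the plan is precisely the ratio mismatch in the middle step: since the bi-Lipschitz factor $L^2$ is not absorbed by the dilation factor $2$, one must pass through the equivalent formulation with a larger dilation and then invoke the scale-change estimate \eqref{eq:capacities} to return to the original definition.
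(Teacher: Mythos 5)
Your proposal is correct and follows essentially the same route as the paper: both compare the two capacities via the bi-Lipschitz relation between upper gradients, use monotonicity together with the ball inclusions to move between metric balls, and invoke the scale-change estimate \eqref{eq:capacities} to absorb the extra dilation factor $L^2$ that the nesting of balls produces, thereby transferring $1$-thinness from one metric to the other. The only difference is presentational (you spell out the modulus/arclength argument behind the upper-gradient comparison and phrase the nesting chain slightly differently), but the mechanism is identical.
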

\begin{proof}
Consider a different metric $d'$ for which we have $L^{-1}d(x,y)\le d'(x,y)\le Ld(x,y)$
for all $x,y\in X$ and some finite $L\ge 1$.
Denote the capacity with respect to $d'$ by $\rcapa_1'$.
Note that if $g$ is an upper gradient of a function $u$ in the space $(X,d',\frm)$,
then $Lg$ is an upper gradient of $u$ in $(X,d,\frm)$.
Recalling \eqref{eq:capacities}, we have
\begin{align*}
\rcapa_1(A\cap B(x,r),B(x,2r))
&\le C'(1+2L^2)\rcapa_1(A\cap B(x,r),B(x,2L^2 r))\\
&\le C'(1+2L^2)\rcapa_1(A\cap B'(x,Lr),B'(x,2L r))\\
&\le LC'(1+2L^2)\rcapa_1'(A\cap B'(x,Lr),B'(x,2L r)).
\end{align*}
The opposite inequality is analogous.
This shows that a set $A\subset X$ is $1$-thin in the space $(X,d,\frm)$ if and only if it is $1$-thin
in the space $(X,d',\frm)$.
\end{proof}

\begin{lemma}\label{lem:components invariance}
The two-sidedness property is invariant under
a bi-Lipschitz change of the metric,
and if $X$ is isotropic with respect to both metrics,
then the essential connected components of a set of finite perimeter are also invariant. 
\end{lemma}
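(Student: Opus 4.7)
The plan is to show that every ingredient entering the two-sidedness property and the notion of essential connected components is invariant under a bi-Lipschitz change of metric $d'$ satisfying $L^{-1}d\le d'\le Ld$ for some $L\ge 1$. Write $B$ and $B'$ for open balls in $(X,d)$ and $(X,d')$ respectively; the inclusions $B'(x,r/L)\subset B(x,r)\subset B'(x,Lr)$ combined with the doubling property of $\frm$ make $\frm(B(x,r))$ and $\frm(B'(x,r))$ comparable up to a constant depending only on $C_d$ and $L$.

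First I would collect the basic invariances. The BV space, and hence the class of sets of finite perimeter, is preserved, since upper gradients rescale by at most a factor of $L$ under the change of metric (this was essentially already used in the proof of Lemma \ref{lem:invariance}). The measure-theoretic interior, exterior, and boundary $I_E$, $O_E$, $\partial^*E$ are preserved, since the density ratios $\frm(B(x,r)\cap E)/\frm(B(x,r))$ and $\frm(B'(x,r)\cap E)/\frm(B'(x,r))$ have the same limits as $r\to 0$ by the ball inclusions and doubling. The codimension-one Hausdorff measure $\mathcal{H}$, and in particular the class of $\mathcal{H}$-null sets, is likewise preserved: any covering by $B$-balls translates into a covering by $B'$-balls of comparable radii, and the defining quantities $\frm(B)/r$ and $\frm(B')/r$ are comparable by doubling.

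With these invariances in hand, the two-sidedness property transfers by a direct ball-comparison. For $x\in\partial^*E\cap\partial^*F$, which is now a metric-independent set, the vanishing of $\frm(B(x,r)\setminus(E\cup F))/\frm(B(x,r))$ as $r\to 0$ is equivalent to the corresponding vanishing with $B'$ in place of $B$, and the $\mathcal{H}$-negligible exceptional set is the same in both metrics; hence two-sidedness in $d$ is equivalent to two-sidedness in $d'$. For the essential connected components, assuming isotropicity in both metrics, I would apply Lemma \ref{lem:measure theoretic boundaries} in each metric separately: this reduces the sum-of-perimeters condition defining decomposability to the metric-independent condition that $\mathcal{H}(\partial^*F\cap\partial^*G)=0$ for a partition $\{F,G\}$ of $E$ into sets of finite perimeter with positive $\frm$-measure. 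Hence decomposability is invariant, and the essentially unique decomposition of Theorem \ref{thm:essential connected components} coincides in $d$ and $d'$. The only step requiring real care is the equivalence of $\mathcal{H}$-null sets in the two metrics; this is routine from doubling and the ball inclusions, but must be checked in order to invoke Lemma \ref{lem:measure theoretic boundaries} on both sides.
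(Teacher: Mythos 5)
Your proposal is correct and takes essentially the same approach as the paper: verify that sets of finite perimeter, $\partial^*$, and (up to comparability) $\mathcal H$ are all invariant under the bi-Lipschitz change, conclude two-sidedness is invariant, and then invoke Lemma \ref{lem:measure theoretic boundaries} together with isotropicity to reduce the perimeter-additivity condition to the metric-independent condition $\mathcal H(\partial^*F\cap\partial^*G)=0$. You spell out more of the "easily seen" ball-comparison details than the paper does, but the structure and the key reduction are the same.
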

\begin{proof}
Finite perimeter and the measure-theoretic boundary are easily seen to be invariant under a bi-Lipschitz change
of the metric, and the Hausdorff measure $\mathcal H$ remains comparable under such a change.
Thus the two-sidedness property is clearly invariant.

If $X$ is isotropic with respect to both metrics,
then by Lemma \ref{lem:measure theoretic boundaries}
we have that the essential connected components
of a set of finite perimeter are also invariant.
\end{proof}

\begin{proposition}\label{prop:open basis}
	The $1$-fine topology is generated by a basis of $1$-finely connected sets.
\end{proposition}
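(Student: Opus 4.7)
The plan is to exhibit, for each point $x$ of each $1$-finely open set $U\subset X$, a $1$-finely open, $1$-finely connected neighborhood of $x$ contained in $U$; these then form a basis for the $1$-fine topology consisting of $1$-finely connected sets. By Lemma~\ref{lem:invariance} the $1$-fine topology is unchanged under a bi-Lipschitz modification of the metric, so I may assume $X$ is geodesic. Fixing $x\in U$, the set $X\setminus U$ is $1$-thin at $x$, so Lemma~\ref{lem:open thin set} enlarges it to an \emph{open} set $W\supset X\setminus U$ still $1$-thin at $x$. Choosing $R>0$ so small that both the space-measure condition and the capacity-ratio condition in Proposition~\ref{prop:constructing the quasiconvex space} are satisfied, I apply that proposition with $x_0=x$ and this $W$. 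It yields an open set $V^*\subset B(x,2R)$ with $W\cap B(x,R)\subset V^*$, together with the metric-connected component $F$ of $B(x,R)\setminus V^*$ containing $x$, with $x\in\fint F$. Since $F$ is disjoint from $W\supset X\setminus U$, one has $\fint F\subset F\subset U$.

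The crucial step is then to verify that $\fint F$ contains a $1$-finely open, $1$-finely connected neighborhood of $x$. By equation~\eqref{eq:F and V} in the proof of Proposition~\ref{prop:constructing the quasiconvex space}, $F\cap B(x,R/2)=B(x,R/2)\setminus V^*$ is already open in $X$, hence $1$-finely open; let $F'$ denote the (metric) connected component of $x$ in this open set, so that $F'$ is metric-open, metric-connected, and $x\in F'\subset\fint F\subset U$. I claim $F'$ is itself $1$-finely connected. Suppose for contradiction that $F'=A\sqcup B$ were a partition into nonempty disjoint $1$-finely open sets. By~\eqref{eq:thinness and measure thinness}, each $y\in A$ satisfies $\Theta(A,y)=1$, so $\frm(A)>0$, and similarly for $B$; in particular the measure-theoretic interiors $I_A$ and $I_B$ are disjoint subsets of $F'$.

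I would then pass to the doubling metric measure space $(F,d_M^F,\frm)$ delivered by Proposition~\ref{prop:constructing the quasiconvex space}, where $d\le d_M^F$, $(d_M^F)_M=d_M^F$, and~\eqref{eq:BF lower bound} holds. Applying Proposition~\ref{prop:density points} to the set $A$ inside a sufficiently small $d_M^F$-ball around an interior point of $F'$ produces a strong-boundary point $z$ at which both $A$ and $F'\setminus A=B$ have lower densities in $d_M^F$-balls bounded away from $0$; via~\eqref{eq:BF lower bound} and $d\le d_M^F$, the same conclusion transfers to $d$-balls, so $\Theta^*(A,z)>0$ and $\Theta^*(B,z)>0$. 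However $z\in F'=A\sqcup B$, say $z\in A$; the $1$-fine openness of $A$ together with~\eqref{eq:thinness and measure thinness} forces $\Theta(X\setminus A,z)=0$ and hence $\Theta(B,z)=0$, contradicting the positive $B$-density produced above. This contradiction establishes the $1$-fine connectedness of $F'$.

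The main obstacle is ensuring that the strong-boundary point extracted from Proposition~\ref{prop:density points} actually lies inside $F'$, rather than on its metric boundary where the above density argument would fail. This is precisely why I restrict to the metric-open component $F'\subset B(x,R/2)$ instead of working with $F$ directly: on $F'$ one has an $\frm$-doubling, connected, locally compact neighborhood of $x$, and Proposition~\ref{prop:density points} may be applied in a $d_M^F$-ball entirely contained in $F'$, forcing the density point to remain in $F'=A\cup B$ and thereby enabling the $1$-fine contradiction.
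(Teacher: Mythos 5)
Your argument is essentially the paper's: pass to a geodesic metric by bi-Lipschitz invariance, enlarge $X\setminus U$ to an open set $W$ that is $1$-thin at $x$, apply Proposition~\ref{prop:constructing the quasiconvex space} to obtain the connected set $F$ and the auxiliary metric measure space $(F,d_M^F,\frm)$, and then use Proposition~\ref{prop:density points} to contradict a hypothetical $1$-fine disconnection. Your deviation — replacing $F$ by the metric-open component $F'$ of $B(x,R/2)\setminus V^*$ containing $x$ — is internally consistent and in fact yields a slightly cleaner conclusion (the basis sets are metric-open, and one avoids the paper's final step passing from $F_x$ to the $1$-finely connected components of $U$), but the concern you use to motivate it is misplaced: in the paper's version the density point $z\in B_F(y,6s)$ automatically lies in $F\subset V_1\cup V_2$, so there is no problem about $z$ landing on a ``metric boundary.''

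There is, however, a genuine gap, precisely at the point where the connectedness of $F'$ (or $F$) must actually be used. You write ``Applying Proposition~\ref{prop:density points} to the set $A$ inside a sufficiently small $d_M^F$-ball around an interior point of $F'$ produces a strong-boundary point $z$.'' Since $F'$ is open, \emph{every} point of $F'$ is an interior point, so this does not pin down a ball at all. Proposition~\ref{prop:density points} has the hypothesis $0<\nu(E\cap B(y,r))<\nu(B(y,r))$, and this fails for a ball centered at, say, a density-$1$ point of $A$: there all small balls are almost entirely in $A$, so no strong-boundary point of density bounded away from $0$ and $1$ is produced. The correct choice, as in the paper, is a point $y\in\partial_{F'}A$, the boundary of $A$ relative to the (connected) subspace $F'$; this set is nonempty because $F'$ is connected in the metric topology while $A\sqcup B$ is a nontrivial partition into sets neither of which can be both open and closed in $F'$. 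Near such a $y$, each small $d_M^F$-ball contains points of both $A$ and $B$, and since $A,B$ are $1$-finely open, \eqref{eq:thinness and measure thinness} together with \eqref{eq:BF lower bound} gives them measure density $1$ (in $B_F$-balls) at those points, so $\frm(A\cap B_F(y,s))$ is strictly between $0$ and $\frm(B_F(y,s))$. Only then does Proposition~\ref{prop:density points} apply. As written, your proof omits exactly this step where metric connectedness enters, and without it the density argument does not get off the ground.
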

\begin{proof}
    By Lemma \ref{lem:invariance}, we can assume that $X$ is geodesic.
    With this assumption, here and in later proofs we understand various constants such as $C_d$ and $C_P$
    to be those in the geodesic space, perhaps different from the original space, and we can assume that $\lambda=1$.
    
	Consider a $1$-finely open set $U\subset X$. Fix $x_0\in U$.
	By Lemma \ref{lem:open thin set}, we find an open set $W\supset X\setminus U$ that is $1$-thin at $x_0$.
	Choose $0<R<\frac{1}{32} \diam X$ sufficiently small that
    \[
	\frac{\frm(B(x_0,2R))}{\frm(X)}<C_0^{-1}
	\]
    and
	\[
	\sup_{0<r\le R}\frac{\rcapa_1(W\cap B(x_0,r),B(x_0,2r))}{\rcapa_1(B(x_0,r),B(x_0,2r))}
    <\frac{1}{C_0 C_1C_dC_P}.
	\]
	Denoting $B=B(x_0,R)$, we can choose a connected set $F\subset B\setminus W$
	as in Proposition \ref{prop:constructing the quasiconvex space}.
	We know that $x_0\in \fint F$.
	We will show that $F$ is $1$-finely connected.
	
Suppose instead that $F$ is not $1$-finely connected.
Then we have
$F\subset V_1\cup V_2$,
where $V_1,V_2\subset B(x_0,R)$ are $1$-finely open sets with $V_1\cap V_2=\emptyset$, and
$V_1\cap F$ and  $V_2\cap F$ are both nonempty.
By connectedness of $F$ (in the metric $d$-topology and hence also
in the $d_M^F$-topology), there is a boundary point $y\in \partial_F V_1$,
i.e.\ a point in the boundary of the set $V_1$ in the metric space
$(F,d^F_M)$.
Let $s>0$ be so small that $B(y,7s)\subset B(x_0,R)$.
There are points $z_1\in B_{F}(y,s)\cap V_1$ and $z_2\in B_{F}(y,s)\cap V_2$.
By \eqref{eq:thinness and measure thinness} and \eqref{eq:BF lower bound}, we have
\[
\lim_{r\to 0}\frac{\frm(V_1\cap B_F(z_1,r))}{\frm(B_F(z_1,r))}=1
\quad\textrm{and}\quad
\lim_{r\to 0}\frac{\frm(V_2\cap B_F(z_2,r))}{\frm(B_F(z_2,r))}=1.
\]
Thus
	\[
	0<\frac{\frm(V_1\cap B_{F}(y,s))}{\frm(B_{F}(y,s))}<1.
	\]
	By Proposition \ref{prop:constructing the quasiconvex space}
	we know that
	$\frm$ has the doubling property
    \[
    \frm(B_{F}(z,2r))\le C_1 C_d^2\frm(B_{F}(z,r))
    \]
    for every $z\in B(y,6s)$ and $0<r\le s$, and then also
    for every $z\in B_F(y,6s)$ and $0<r\le s$.
	Moreover, in the space $(F,d_M^F)$, $\overline{B}_F (y,6s)$ is a closed subset of
	the set $\overline{B}(y,6s)\cap F$ that is compact in $(X,d)$ and then also in $(F,d_M^F)$ since the 
	topologies are the same. Thus $\overline{B}_F (y,6s)$ is compact.
	Then by Proposition \ref{prop:density points}, we find a point $z\in B_F(y,6s)$
	(in particular, $z\in F$)
	such that
	\[
		\frac{1}{4 (C_1 C_d^2)^{12}}\le \liminf_{r\to 0}\frac{\frm(V_1 \cap  B_{F}(z,r))}{\frm(B_{F}(z,r))}
		\le \limsup_{r\to 0}\frac{\frm(V_1 \cap B_{F}(z,r))}{\frm(B_{F}(z,r))}
		\le 1-\frac{1}{4 (C_1 C_d^2)^{12}}.
	\]
	By \eqref{eq:BF lower bound}, we obtain
		\[
	\frac{1}{4 (C_1 C_d^2)^{13}}\le \liminf_{r\to 0}\frac{\frm(V_1 \cap B(z,r))}{\frm(B(z,r))}
	\le \limsup_{r\to 0}\frac{\frm(V_1 \cap B(z,r))}{\frm(B(z,r))}
	\le 1-\frac{1}{4  (C_1 C_d^2)^{13}}.
	\]
	The same inequalities hold for $V_2$.
	By \eqref{eq:thinness and measure thinness} we have
	that $z\notin V_1\cup V_2$, a contradiction. Thus $F$ is $1$-finely connected.
	
	We conclude that for every $x\in U$ we find a $1$-finely connected set $F_x\subset U$
	such that $x\in \fint F_x$.
	Let $V$ be the $1$-finely connected component of $U$ containing $x$. Now necessarily
	$V$ is $1$-finely open.
\end{proof}

\begin{proposition}\label{prop:finite perimeter}
	Let $E\subset X$ be a set of finite perimeter.
	Connected components of $\fint I_E$
	in the $1$-fine topology, as well as any union of them, are sets of finite perimeter.
\end{proposition}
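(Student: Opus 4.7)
The plan is to apply Federer's characterization of sets of finite perimeter (Theorem \ref{thm:Federers characterization}): letting $V$ denote a union of connected components of $\fint I_E$ in the $1$-fine topology, I would show that $V$ is $\frm$-measurable and that $\mathcal{H}(\partial^*V)<\infty$ by establishing $\partial^*V\subset \partial^*E$ up to an $\mathcal{H}$-negligible set.

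First I would argue that there are only countably many $1$-fine components. Each component $C$ is $1$-finely open, so it contains a point $x$ at which $X\setminus C$ is $1$-thin; by \eqref{eq:thinness and measure thinness} this gives $\Theta(C,x)=1$, so $\frm(C)>0$. Since $X$ is proper and $\frm$ is doubling, $\frm$ is $\sigma$-finite, and a family of pairwise disjoint sets of positive $\frm$-measure must be countable. Consequently $V$ is itself a countable union of $1$-finely open sets, hence $1$-finely open, and by \eqref{eq:finely and quasiopen} each component agrees with a Borel set up to an $\mathcal{H}$-null (hence $\frm$-null) set, so $V$ is $\frm$-measurable.

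Next I would analyze $\partial^*V$. Set $U:=\fint I_E\setminus V$, which is also a union of connected components of $\fint I_E$ and hence $1$-finely open. If $x\in V\cap\partial^*V$, then $X\setminus V$ is $1$-thin at $x$, contradicting $\Theta^*(X\setminus V,x)>0$ via \eqref{eq:thinness and measure thinness}. If $x\in U\cap\partial^*V$, then since $V\subset X\setminus U$ and $X\setminus U$ is $1$-thin at $x$, we get $\Theta(V,x)=0$, contradicting $\Theta^*(V,x)>0$. Hence $\partial^*V\subset X\setminus\fint I_E$. By Proposition \ref{prop:IE and OE}, $I_E$ equals a $1$-finely open subset up to an $\mathcal{H}$-null set; that subset is necessarily contained in $\fint I_E$, so $\mathcal{H}(I_E\setminus\fint I_E)=0$. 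Outside this exceptional set we then have $\partial^*V\subset X\setminus I_E$, and for such points $\Theta^*(X\setminus E,x)>0$ by definition of $I_E$; moreover, since $V\subset I_E$ coincides with $V\cap E$ up to a $\frm$-null set, $\Theta^*(V,x)>0$ forces $\Theta^*(E,x)>0$. Therefore $x\in\partial^*E$.

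Combining the steps, $\mathcal{H}(\partial^*V\setminus\partial^*E)=0$, and since $\mathcal{H}(\partial^*E)\le \alpha^{-1}P(E,X)<\infty$ by \eqref{eq:def of theta}, we conclude $\mathcal{H}(\partial^*V)<\infty$. Federer's characterization (Theorem \ref{thm:Federers characterization}) then yields $P(V,X)<\infty$. No step is genuinely hard; the only delicate point is the careful bookkeeping of $\mathcal{H}$-negligible differences between $E$, $I_E$, and $\fint I_E$ when transferring measure-density information from $V$ to $E$.
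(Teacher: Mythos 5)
Your proof is correct, but it takes a genuinely different route from the paper's. You show directly that $\partial^*V\subset\partial^*E$ up to an $\mathcal{H}$-null set (the set $I_E\setminus\fint I_E$), using only measure-density arguments and the fact that both $V$ and its complement $\fint I_E\setminus V$ are $1$-finely open, and then invoke Theorem~\ref{thm:Federers characterization} on the measure-theoretic boundary. The paper instead works with the $1$-fine boundary: it shows $\partial^1 V'\subset\partial^1 I_E$ for a single component $V'$, transfers this to $\partial^1 I_{V'}$ via Lemma~\ref{lem:cap negligible} (after verifying $\mathcal{H}(I_{V'}\setminus V')=0$), and applies the fine-boundary version of Federer's characterization, Theorem~\ref{thm:Federer fine}; for unions it then uses the strong boundary $\Sigma_\gamma$ and the bounded-overlap property to derive the quantitative estimate $\sum_j P(V_j,X)\le C_d(1/\gamma+1)\mathcal{H}(\partial^1 I_E)<\infty$, which is \eqref{eq:perimeter countable sum}, and concludes by subadditivity. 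Your approach is arguably more elementary (it avoids the transfer lemma and the fine-boundary Federer theorem), but it does \emph{not} produce the summability bound \eqref{eq:perimeter countable sum}, which the paper needs later in Proposition~\ref{prop:capacity sum}; if you wanted that bound you would still need the $\Sigma_\gamma$ argument. One small thing you should make explicit: your assertion that each component of $\fint I_E$ is $1$-finely open is not automatic in an arbitrary topology --- it is precisely the local connectedness of the $1$-fine topology established in Proposition~\ref{prop:open basis}, and this should be cited, as it is the nontrivial input that makes the whole decomposition work. (The rest --- the measurability of $V$ via $1$-quasiopenness, $\mathcal{H}(I_E\setminus\fint I_E)=0$ via Proposition~\ref{prop:IE and OE}, the density inferences via \eqref{eq:thinness and measure thinness} and $\frm(E\Delta I_E)=0$ --- all checks out.)
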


\begin{proof}
    A set having finite perimeter is clearly invariant under a bi-Lipschitz change of the metric,
    and so is the measure-theoretic interior $I_E$.
    By Lemma \ref{lem:invariance}, the $1$-fine topology is invariant as well.
    By the discussion at the beginning of Section \ref{sec:locally connected}, we can assume that $X$ is geodesic.
    
	By Proposition \ref{prop:open basis}, we know that the connected
	components of $\fint I_E$ in the $1$-fine topology are also $1$-finely open sets.
	Each one has nonzero measure due to \eqref{eq:thinness and measure thinness},
	and so there are at most countably many of them,
	and thus we have the decomposition into connected components
	\begin{equation}\label{eq:countable union}
		\fint I_E=\bigcup_{j}V_j,
	\end{equation}
	where the union is either finite or countable.
	Let $V'=V_k$ for some $k$. Suppose $x\in \partial^1 V'$.
	By the characterization \eqref{eq:fine bdry char}, we know that
	$x\in b_1 V'\subset b_1 I_E\subset \overline{I_E}^1$.
	If the point $x$ were in some other connected component $V''$, it would follow that
	\[
	\limsup_{r\to 0}\frac{\rcapa_1(B(x,r)\cap V',B(x,2r))}{\rcapa_1(B(x,r),B(x,2r))}
	\le \limsup_{r\to 0}\frac{\rcapa_1(B(x,r)\setminus  V'',B(x,2r))}{\rcapa_1(B(x,r),B(x,2r))}=0,
	\]
	contradicting the fact that $x\in b_1 V'$.
	In total, $x\in X\setminus \bigcup_j V_j$, that is,
	$x\notin \fint I_E$, and so $x\in \overline{I_E}\setminus \fint I_E= \partial^1 I_E$.
	We conclude that $\partial^1 V'\subset \partial^1 I_E$.
	By Theorem \ref{thm:Federer fine} we know that $\mathcal H(\partial^1 I_E)<\infty$,
	and so also $\mathcal H(\partial^1 V')<\infty$.
	We have $V'\subset I_{V'}$ by \eqref{eq:thinness and measure thinness}.
    On the other hand, if $y\in I_{V'}$, again by \eqref{eq:thinness and measure thinness}
    we know that $y$ cannot belong to any other set $V_j$, and so
    \begin{align*}
    \mathcal H(I_{V'}\setminus V')
    &= \mathcal H(I_{V'}\setminus \fint I_E)\\
    &\le \mathcal H(I_{\fint I_E}\setminus \fint I_E)\quad \textrm{by }\eqref{eq:countable union}\\
    &= \mathcal H(I_{E}\setminus \fint I_E)\quad \textrm{by Proposition }\ref{prop:IE and OE}\\
    &=0
    \end{align*}
	by another application of Proposition \ref{prop:IE and OE}.
	From Lemma \ref{lem:cap negligible} we get
	\[
	\mathcal H(\partial^1 V' \Delta \partial^1 I_{V'})=0.
	\]
	Thus also $\mathcal H(\partial^1 I_{V'})<\infty$.
	Now by Theorem \ref{thm:Federer fine},
	it follows that $V'$ is a set of finite perimeter.
	
	From \eqref{eq:fine boundary and measure}
	and \eqref{eq:strong boundary and measure}, for each $j$ 
	we get $\mathcal H(\partial^1 I_{V_j}\setminus \Sigma_{\gamma}V_j)=0$.
	Each point $x$ can only belong to fewer than $1/\gamma+1$ sets $\Sigma_{\gamma}V_j$.
	It follows that
	\[
	\sum_{j}\mathcal H(\partial^1 I_{V_j})
	= \sum_{j}\mathcal H(\Sigma_{\gamma}V_j)
	\le ( 1/\gamma+1)\mathcal H(\partial^1 I_E).
	\]
	From \eqref{eq:def of theta} it follows that
	\begin{equation}\label{eq:perimeter countable sum}
	\sum_{j} P(V_j,X)\le
    C_d( 1/\gamma+1)\mathcal H(\partial^1 I_E)<\infty.
	\end{equation}
	Thus for any union $\bigcup_{j\in I}V_j$, using the subadditivity
    \eqref{eq:perimeter of union} we get
	\[
	P\Big(\bigcup_{j\in I}V_j,X\Big)
	\le \sum_{j\in I} P(V_j,X)\le C_d( 1/\gamma+1)\mathcal H(\partial^1 I_E)<\infty.
	\]
This completes the proof.
\end{proof}

Now we obtain the following improvement of Proposition \ref{prop:open basis}.
\begin{theorem}\label{prop:open basis improved}
	The $1$-fine topology is generated by a basis of $1$-finely connected sets of finite perimeter.
\end{theorem}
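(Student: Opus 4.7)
The plan is to combine the local construction behind Proposition~\ref{prop:open basis} with the perimeter estimate of Proposition~\ref{prop:finite perimeter}. Fix a $1$-finely open set $U$ and a point $x_0\in U$; we shall produce a $1$-finely open, $1$-finely connected set $V_0\subset U$ with $x_0\in V_0$ and $P(V_0,X)<\infty$. By Lemma~\ref{lem:invariance} (the property of having finite perimeter is obviously bi-Lipschitz invariant as well), we may assume $X$ is geodesic. Use Lemma~\ref{lem:open thin set} to pick an open set $W\supset X\setminus U$ which is $1$-thin at $x_0$, and choose $R\in(0,\tfrac{1}{32}\diam X)$ small enough to satisfy hypotheses \eqref{eq:space measure} and \eqref{eq:small capacity} of Theorem~\ref{thm:strong Cartan property} and, additionally, small enough that $B(x_0,R/2)$ has finite perimeter---which holds for a.e.\ such $R$ by the coarea formula \eqref{eq:coarea} applied to $d(x_0,\cdot)$. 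Theorem~\ref{thm:strong Cartan property} combined with Lemma~\ref{lem:solutions from capacity} then yields an open set $V$, $1$-thin at $x_0$, with $W\cap B(x_0,R)\subset V\subset B(x_0,2R)$ and $P(V,X)<\infty$.

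Define $G:=B(x_0,R/2)\setminus V$, which is of finite perimeter by \eqref{eq:perimeter subadditivity}. Since $X\setminus G=V\cup(X\setminus B(x_0,R/2))$, and both summands are $1$-thin at $x_0$---the first by Theorem~\ref{thm:strong Cartan property}, the second trivially because it is disjoint from a fixed neighborhood of $x_0$---we obtain $x_0\in\fint G$. Because $\fint G$ is $1$-finely open and contained in $I_G$ (by \eqref{eq:thinness and measure thinness}), it follows that $\fint G\subset\fint I_G$, and hence $x_0\in\fint I_G$. Applying Proposition~\ref{prop:finite perimeter} to $E:=G$, the $1$-fine connected component $V_0$ of $\fint I_G$ containing $x_0$ is a $1$-finely open, $1$-finely connected set of finite perimeter.

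It remains to verify the inclusion $V_0\subset U$. Since $V\supset W\cap B(x_0,R/2)$, we have $G\subset B(x_0,R/2)\setminus W\subset X\setminus W$, and hence $I_G\subset I_{X\setminus W}$. But $W$ is open, so every $z\in W$ satisfies $\Theta(W,z)=1$ and consequently $z\notin I_{X\setminus W}$; thus $I_{X\setminus W}\subset X\setminus W\subset U$. Therefore $V_0\subset\fint I_G\subset I_G\subset U$, as required.

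The main obstacle is precisely this last containment. Proposition~\ref{prop:finite perimeter} only places $V_0$ inside $I_G$, and in general $I_G$ need not be a subset of $U$: points on the $1$-fine boundary of $U$ at which $X\setminus U$ has density zero but is not $1$-thin could survive inside $I_G$. The resolution is to enlarge $X\setminus U$ to an \emph{open} set $W$ via Lemma~\ref{lem:open thin set}; then $G\subset X\setminus W$ combined with the openness of $W$ promotes a merely measure-theoretic inclusion into the pointwise inclusion $I_G\subset X\setminus W\subset U$.
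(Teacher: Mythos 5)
Your proof is correct and follows essentially the same route as the paper's: thin-enlargement of $X\setminus U$ to an open $W$ via Lemma~\ref{lem:open thin set}, a Cartan-property obstacle solution $V$, the finite-perimeter set $G=B(x_0,R/2)\setminus V$, and Proposition~\ref{prop:finite perimeter}. The only cosmetic difference is in the final containment, where you argue $V_0\subset\fint I_G\subset I_G\subset I_{X\setminus W}\subset X\setminus W\subset U$ while the paper writes $\fint I_E\subset\overline{E}\subset\overline{\tfrac12 B}\setminus V\subset V'$; both exploit the openness of $W$ (resp.\ $V$) in the same way.
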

\begin{proof}
Take a $1$-finely open set $V'\subset X$. Fix $x_0\in V'$.
By Lemma \ref{lem:open thin set},
we find an open set $W\supset X\setminus V'$ that is $1$-thin at $x_0$.
Choose $0<R<\frac{1}{32} \diam X$ sufficiently small that
	\[
    \frac{\frm(B(x_0,2R))}{\frm(X)}<C_0^{-1}
	\]
and
	\[
	\sup_{0<r\le R}\frac{\rcapa_1(W\cap B(x_0,r),B(x_0,2r))}{\rcapa_1(B(x_0,r),B(x_0,2r))}<C_0^{-1}.
	\]
    By the coarea formula \eqref{eq:coarea} we know that $P(B(x_0,s),X)<\infty$
    for a.e.\ $s>0$, and so we can assume also that $P(B(x_0,R/2),X)<\infty$.
    As usual, denote $B=B(x_0,R)$.
	Take a solution $\ch_V=\ch^{\wedge}_V$ of the
	$\mathcal K_{W\cap B,0}(4B)$-obstacle problem;
	by Theorem \ref{thm:strong Cartan property} we have that $V\subset 2B$ is open, and
	$V$ is $1$-thin at $x_0$, so that by \eqref{eq:fine int},
	$x_0\in \fint(X\setminus V)$.
	By Lemma \ref{lem:solutions from capacity}, we also know that
	\[
	P(V,X)\le \rcapa_1(W\cap B,4B)<\infty.
	\]
	Then by \eqref{eq:perimeter subadditivity},
    $E:=\tfrac 12 B\setminus V$ is a set of finite perimeter with $x_0\in\fint E$.
    By \eqref{eq:thinness and measure thinness} we know that $\fint E\subset I_E$,
    and so $\fint E\subset \fint I_E$. Thus also $x_0\in \fint I_E$.
    By Proposition \ref{prop:finite perimeter} we know that the $1$-finely connected component
    of $\fint I_E$ containing $x_0$ is a set of finite perimeter.
    Finally,
    \[
    \fint I_E\subset \overline{E}\subset \overline{\tfrac 12 B}\setminus V \subset V'.
    \]
    Thus $\fint I_E$ is the required basis set.
\end{proof}

We also note the following result.

\begin{proposition}\label{prop:capacity sum}
Let $E\subset X$ be a set of finite perimeter with $\frm(E)<\infty$,
and write $\fint I_E$ as the union of its $1$-finely connected components
\[
\fint I_E=\bigcup_{j}V_j.
\]
Then
\[
\sum_{j}\capa_1(V_j)<\infty.
\]
\end{proposition}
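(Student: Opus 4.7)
The plan is to establish the uniform bound
\begin{equation}\label{eq:plan unif cap}
\capa_1(V_j)\le C\bigl(\frm(V_j)+P(V_j,X)\bigr)
\end{equation}
for a constant $C$ depending only on $C_d$, $C_P$, $\lambda$, and then to sum over $j$. The two sums on the right of \eqref{eq:plan unif cap} are both already available. The disjointness of the $V_j$'s inside $\fint I_E\subset I_E$, together with the hypothesis $\frm(E)<\infty$, yields $\sum_j\frm(V_j)\le\frm(I_E)=\frm(E)$, while inequality \eqref{eq:perimeter countable sum} derived inside the proof of Proposition \ref{prop:finite perimeter} gives $\sum_j P(V_j,X)\le C_d(1/\gamma+1)\mathcal H(\partial^1 I_E)<\infty$.

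For \eqref{eq:plan unif cap}, I would first reduce to the measure-theoretic interior: the proof of Proposition \ref{prop:finite perimeter} shows that $\mathcal H(I_{V_j}\setminus V_j)=0$, and combining this with \eqref{eq:null sets of Hausdorff measure and capacity} and subadditivity of $\capa_1$ gives $\capa_1(V_j)=\capa_1(I_{V_j})$. Next, since $\ch_{V_j}\in\BV(X)$ with $\|\ch_{V_j}\|_{L^1(X)}=\frm(V_j)$ and $\|D\ch_{V_j}\|(X)=P(V_j,X)$, the definition of the total variation together with truncation to $[0,1]$ and a suitable compactly supported cutoff supplies a sequence $u_k\in N^{1,1}(X)$ with $u_k\to\ch_{V_j}$ in $L^1(X)$, $\|u_k\|_{L^1(X)}\to\frm(V_j)$, and $\int g_{u_k}\dfrm\to P(V_j,X)$. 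The heart of the argument is then to promote this to a convergence at the level of capacity -- by the $1$-quasi-continuity of Newton-Sobolev and $\BV$ functions in PI spaces -- so that the exceptional set $G_k:=\{x\in I_{V_j}\colon u_k(x)<1/2\}$ can be arranged to satisfy $\capa_1(G_k)\to 0$. Then $2u_k$ is admissible for $\capa_1(I_{V_j}\setminus G_k)$, and subadditivity gives
\[
\capa_1(I_{V_j})\le 2\|u_k\|_{N^{1,1}(X)}+\capa_1(G_k);
\]
a diagonal argument produces \eqref{eq:plan unif cap} with $C=2$.

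Summing \eqref{eq:plan unif cap} over $j$ then yields the desired $\sum_j\capa_1(V_j)\le C\sum_j(\frm(V_j)+P(V_j,X))<\infty$. The main obstacle is the quasi-uniform capacity approximation of $\ch_{V_j}$ used above; a possibly cleaner alternative would be to pass through the variational $\BV$-capacity via \eqref{eq:variational one and BV capacity}, taking $\ch_{V_j}$ as an admissible competitor (note that $\ch_{V_j}^{\vee}\ge 1$ $\mathcal H$-a.e.\ on $V_j$, since $V_j$ has full density at each of its points) to obtain $\rcapa_1(V_j,X)\le C_r P(V_j,X)$, and then upgrade from the variational capacity to $\capa_1$ by adding back the $L^1$-norm of a well-localized admissible test function of comparable support near $V_j$.
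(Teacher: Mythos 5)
Your high-level plan is the same as the paper's: reduce to a bound of the form $\capa_1(V_j)\lesssim \frm(V_j)+P(V_j,X)+2^{-j}$ and then sum, using $\sum_j\frm(V_j)\le\frm(E)<\infty$ (disjointness inside $I_E$) and $\sum_jP(V_j,X)<\infty$ from \eqref{eq:perimeter countable sum}. Those two summations are correct. The gap is that neither of your routes actually establishes the capacity bound for a single $V_j$. In the primary route, the step you yourself flag as the heart of the argument --- arranging $\capa_1(\{x\in I_{V_j}:u_k(x)<1/2\})\to 0$ --- does not follow from quasi-continuity: quasi-uniform convergence is available for sequences that are Cauchy in $N^{1,1}$, whereas your $u_k$ converge to $\ch_{V_j}\notin N^{1,1}$ only in $L^1$ with bounded gradient integrals, and in that generality a set of small measure where $u_k<1/2$ can carry large $1$-capacity. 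What you are implicitly trying to reprove is the comparison $\capa_1\le C_r'\capa_{\BV}$ of \eqref{eq:cap and BV cap}; but that inequality requires a competitor that is $\ge 1$ on a \emph{neighborhood} of the set, and producing such a neighborhood is exactly the missing content (without it the bound $\capa_1(A)\le C(\frm(A)+P(A,X))$ is false for general $A$). The alternative route fails for a different reason: in $\rcapa_1(V_j,X)$ and $\rcapa^{\vee}_{\BV}(V_j,X)$ the boundary condition on $X\setminus\Om$ is vacuous, so these quantities carry no $L^1$ information (indeed they vanish whenever constants are admissible), and ``adding back the $L^1$-norm of a well-localized test function'' is precisely where the entire difficulty is hidden --- localizing by a cutoff needs $L^1$ control of the competitor on the transition region, which is what you are trying to prove.

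The ingredient you are missing is the quasi-openness of each $V_j$: by Proposition \ref{prop:IE and OE} and \eqref{eq:finely and quasiopen} there are open sets $G_j$ with $\capa_1(G_j)<2^{-j}$ such that $V_j\cup G_j$ is open. Taking $u_j\in N^{1,1}(X)$, $0\le u_j\le 1$, admissible for $\capa_1(G_j)$ with $\Vert u_j\Vert_{N^{1,1}(X)}<2^{-j}$, the coarea formula \eqref{eq:coarea} and Cavalieri's principle yield $t\in(0,1)$ with $\frm(\{u_j>t\})+P(\{u_j>t\},X)<2^{-j}$. Then $V_j\cup\{u_j>t\}\supset V_j\cup G_j$ is a neighborhood of $V_j$, so $\ch_{V_j\cup\{u_j>t\}}$ is admissible for $\capa_{\BV}(V_j)$, and the subadditivity \eqref{eq:perimeter subadditivity} gives $\capa_{\BV}(V_j)\le\frm(V_j)+P(V_j,X)+2^{-j}$; now \eqref{eq:cap and BV cap} delivers the bound on $\capa_1(V_j)$ and the summation you describe finishes the proof.
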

\begin{proof}
By \eqref{eq:finely and quasiopen} and Proposition \ref{prop:IE and OE},
each set $V_j$ is $1$-quasiopen, and so we find open sets $G_j$ such that each $V_j\cup G_j$
is open and $\capa_1(G_j)<2^{-j}$.
For each $j\in\N$, we find a function $u_j\in N^{1,1}(X)$ such that
$0\le u_j\le 1$, $u_j\ge 1$ in $G_j$, and
$\Vert u_j\Vert_{N^{1,1}(X)}< 2^{-j}$. Then also $u_j\in \BV(X)$ and
\[
\Vert u_j\Vert_{L^1(X)}+\Vert Du_j\Vert(X)< 2^{-j},
\]
and by the coarea formula \eqref{eq:coarea} and Cavalieri's principle, we find $t\in (0,1)$ such that
\[
\frm(\{u_j>t\})+P(\{u_j>t\},X)< 2^{-j}.
\]
Note that $V_j\cup \{u_j>t\}$ contains $V_j\cup G_j$ which is a neighborhood of $V_j$.
Then
\begin{align*}
\capa_{\BV}(V_j\cup G_j)
&\le \frm(V_j\cup \{u_j>t\})+P(V_j\cup \{u_j>t\},X)\\
&\le \frm(V_j)+\frm(\{u_j>t\})+P(V_j,X)+P(\{u_j>t\},X)\\
&\le \frm(V_j)+P(V_j,X)+2^{-j},
\end{align*}
and so by \eqref{eq:cap and BV cap},
\begin{align*}
\sum_{j}\capa_1(V_j)
&\le C_r'\sum_{j}\capa_{\BV}(V_j\cup G_j)\\
&\le C_r'\sum_{j}(\frm(V_j)+P(V_j,X)+2^{-j})\\
&\le C_r'\big(\frm(E)+1+\sum_j P(V_j,X)\big)\\
&<\infty
\end{align*}
by \eqref{eq:perimeter countable sum}.
\end{proof}

\section{The finely connected components are the essential connected components}\label{sec:main result on components}

In this section we prove that the $1$-finely connected components
coincide with the essential connected components of a set of finite perimeter.

We start with the following lemma.

\begin{lemma}\label{lem:continuity argument}
Suppose $0<\delta<1/(15 C_d^2 C' C'')$,
$x\in X$, and $A_1,A_2\subset X$ are disjoint sets with 
\[
\lim_{r\to 0}\frac{\rcapa_1(B(x,r)\setminus (A_1\cup A_2),B(x,2r))}{\rcapa_1(B(x,r),B(x,2r))}=0,
\]
and
\[
\limsup_{r\to 0}\frac{\rcapa_1(A_1\cap B(x,r),B(x,2r))}{\rcapa_1(B(x,r),B(x,2r))}>5 C_d^2 C' C''\delta
\]
and 
\[
\limsup_{r\to 0}\frac{\rcapa_1(A_2\cap B(x,r),B(x,2r))}{\rcapa_1(B(x,r),B(x,2r))}>5 C_d^2 C' C''\delta.
\]
Then
\[
\limsup_{r\to 0}\frac{\min\{\rcapa_1(A_1\cap B(x,r),B(x,2r)),\rcapa_1(A_2\cap B(x,r),B(x,2r))\}}{\rcapa_1(B(x,r),B(x,2r))}>\delta.
\]
\end{lemma}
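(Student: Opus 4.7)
The plan is to argue by contradiction. Set
\[
f_i(r) := \frac{\rcapa_1(A_i\cap B(x,r),B(x,2r))}{\rcapa_1(B(x,r),B(x,2r))}, \qquad i=1,2,
\]
and suppose $\limsup_{r\to 0}\min\{f_1(r),f_2(r)\}\le \delta$. Fix a small auxiliary $\epsilon>0$ to be specified depending on $\delta,C_d,C',C''$. Two preliminary facts are needed. First, applying countable subadditivity of $\rcapa_1(\cdot,B(x,2r))$ to the partition
\[
B(x,r)=\bigl(A_1\cap B(x,r)\bigr)\cup\bigl(A_2\cap B(x,r)\bigr)\cup\bigl(B(x,r)\setminus(A_1\cup A_2)\bigr),
\]
dividing by $\rcapa_1(B(x,r),B(x,2r))$, and using the hypothesized vanishing of the third term, yields $f_1(r)+f_2(r)\ge 1-\epsilon$ for all small $r$. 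Combined with $\min\{f_1(r),f_2(r)\}\le\delta+\epsilon$ from the contradiction hypothesis, this produces a \emph{dichotomy} at each such $r$: one of $f_1(r),f_2(r)$ lies in the small regime $[0,\delta+\epsilon]$ and the other in the large regime $[1-\delta-2\epsilon,1]$, separated by a strict gap.

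Second, I would establish the key scale-halving estimate
\[
f_i(r')\le 5 C' C_d^2 C''\, f_i(r),\qquad r'\in[r/2,r],\ r\text{ small}.
\]
To prove this, apply \eqref{eq:capacities} to $A_i\cap B(x,r')\subset B(x,r')$ with $t=2r/r'\in[2,4]$, obtaining
\[
\rcapa_1(A_i\cap B(x,r'),B(x,2r'))\le 5C'\rcapa_1(A_i\cap B(x,r'),B(x,2r))\le 5C'\rcapa_1(A_i\cap B(x,r),B(x,2r)),
\]
the last step by monotonicity in the set. The ratio of normalizations $\rcapa_1(B(x,r),B(x,2r))/\rcapa_1(B(x,r'),B(x,2r'))$ is controlled by $C_d^2 C''$ via \eqref{eq:cap estimate} and the doubling property (since $r'\ge r/2$).

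With these in hand, I would propagate smallness downward. If $f_1(r_0)\le\delta+\epsilon$ at some small $r_0$, the halving estimate gives $f_1(r')\le 5 C' C_d^2 C''(\delta+\epsilon)$ on $[r_0/2,r_0]$. The hypothesis $\delta<1/(15C_d^2 C' C'')$ forces $5C'C_d^2 C''\delta<1/3$, so with $\epsilon$ chosen small enough one has $5C'C_d^2C''(\delta+\epsilon)<1-\delta-2\epsilon$; the dichotomy then precludes the large regime and forces $f_1(r')\le\delta+\epsilon$ on all of $[r_0/2,r_0]$. Iterating dyadically yields $f_1(r')\le\delta+\epsilon$ for every $r'\in(0,r_0]$. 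To furnish the initial scale, use $\limsup_{r\to 0}f_2(r)>5C_d^2C'C''\delta>\delta+\epsilon$ to pick a small $s$ with $f_2(s)>\delta+\epsilon$; the dichotomy then forces $f_1(s)\le\delta+\epsilon$, and downward propagation from $r_0:=s$ gives $f_1(r')\le\delta+\epsilon$ for all $r'\le s$, contradicting $\limsup_{r\to 0}f_1(r)>\delta+\epsilon$. The main point requiring care is the quantitative balance between the factor $5C'C_d^2C''$ arising in the halving estimate and the gap width $1-\delta-2\epsilon$, and it is precisely to ensure this balance that the threshold $\delta<1/(15C_d^2 C' C'')$ is imposed.
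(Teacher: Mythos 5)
Your proof is correct and rests on exactly the same two ingredients as the paper's: subadditivity of $\rcapa_1(\cdot,B(x,2r))$ over the partition into $A_1$, $A_2$ and the capacitarily negligible remainder (giving $f_1+f_2$ close to $1$), and the comparison of relative capacities at scales $r'\in[r/2,r]$ via \eqref{eq:capacities} and \eqref{eq:cap estimate}, which is where the factor $5C'C_d^2C''$ comes from in both arguments. The only difference is organizational: you argue by contradiction and propagate smallness of $f_1$ dyadically downward, whereas the paper argues directly by selecting nearby scales $R$ and $r\in[R/2,R]$ at which $f_1$ and $f_2$ are respectively large and transferring the largeness of $f_2$ from $r$ to $R$.
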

\begin{proof}
For all sufficiently small $r>0$, we have
\[
\frac{\rcapa_1(A_1\cap B(x,r),B(x,2r))}{\rcapa_1(B(x,r),B(x,2r))}
+\frac{\rcapa_1(A_2\cap B(x,r),B(x,2r))}{\rcapa_1(B(x,r),B(x,2r))}>\frac{2}{3}.
\]
Thus we can find an arbitrarily small $R>0$ and $r\in [R/2,R]$ such that
\[
\frac{\rcapa_1(A_1\cap B(x,R),B(x,2R))}{\rcapa_1(B(x,R),B(x,2R))}>5 C_d^2 C' C''\delta
\]
and
\[
\frac{\rcapa_1(A_2\cap B(x,r),B(x,2r))}{\rcapa_1(B(x,r),B(x,2r))}>5 C_d^2 C' C''\delta.
\]
Then by \eqref{eq:cap estimate},
\begin{align*}
&\frac{\rcapa_1(A_2\cap B(x,R),B(x,2R))}{\rcapa_1(B(x,R),B(x,2R))}\\
&\quad \ge (C_d^2 C'')^{-1}\frac{\rcapa_1(A_2\cap B(x,r),B(x,2R))}{\rcapa_1(B(x,r),B(x,2r))}\\
&\quad \ge (5 C_d^2 C' C'')^{-1}\frac{\rcapa_1(A_2\cap B(x,r),B(x,2r))}{\rcapa_1(B(x,r),B(x,2r))}
\quad\textrm{by }\eqref{eq:capacities}\\
&\quad >\delta.
\end{align*}
Since $R>0$ can be chosen arbitrarily small, we have the conclusion.
\end{proof}

Recall the notation $\Theta(E,x)$ for measure densities from \eqref{eq:upper measure density}.

\begin{theorem}\label{thm:iff}
Suppose $X$ has the two-sidedness property. Let $E\subset X$ be such that $P(E,X)<\infty$.
Then the set $E$ is indecomposable
if and only if $\fint I_E$ is 1-finely connected.
\end{theorem}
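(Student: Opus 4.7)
The plan is to prove both implications by contradiction. The main bridge between perimeter additivity (in the definition of decomposability) and measure-theoretic boundary structure is Lemma~\ref{lem:measure theoretic boundaries}, applicable because two-sidedness implies isotropicity. Throughout, Proposition~\ref{prop:IE and OE} is used to shuttle between $I_E$ and $\fint I_E$ modulo $\mathcal H$-null sets (and analogously for subsets of $E$).

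For the \emph{only if} direction, assume $\fint I_E$ is not 1-finely connected. By Proposition~\ref{prop:finite perimeter}, write $\fint I_E = \bigcup_j V_j$ for its 1-fine components, each a 1-finely open set of finite perimeter with $\frm(V_j)>0$; at least two are nonempty. Setting $V' := \bigcup_{j\ne 1} V_j$ (of finite perimeter by Proposition~\ref{prop:finite perimeter}), the goal is $P(E,X) = P(V_1,X)+P(V',X)$, which combined with $V_1\cup V' = E$ modulo $\frm$-null gives a nontrivial decomposition. By Lemma~\ref{lem:measure theoretic boundaries}(ii) and the inclusion $\partial^*V' \subset \bigcup_{j\ne 1}\partial^*V_j$, this reduces to $\mathcal H(\partial^*V_1\cap\partial^*V_j)=0$ for each $j\ne 1$. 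I would pick $x$ in the strong boundary intersection $\Sigma_\gamma V_1\cap\Sigma_\gamma V_j$ at which the two-sidedness conclusion $\Theta(V_1\cup V_j,x)=1$ holds (all these conditions hold $\mathcal H$-a.e.\ on $\partial^*V_1\cap\partial^*V_j$, by \eqref{eq:strong boundary and measure} and the two-sidedness definition). A density check via \eqref{eq:thinness and measure thinness} forces $x\notin V_k$ for every $k$: if $x\in V_k$, then $X\setminus V_k$ is 1-thin at $x$, hence of zero density, contradicting the $\ge\gamma$ density of some $V_i$ with $i\ne k$. Hence $x\notin\fint I_E$; yet $\Theta(\fint I_E,x)\ge\Theta(V_1\cup V_j,x)=1$ forces $x\in I_{\fint I_E}=I_E$, placing $x\in I_E\setminus\fint I_E$, which is $\mathcal H$-null by Proposition~\ref{prop:IE and OE}. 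This contradicts the choice of $x$ from a set of positive $\mathcal H$-measure.

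For the \emph{if} direction, suppose for contradiction that $E=F\sqcup G$ with $\frm(F),\frm(G)>0$ and $P(E,X) = P(F,X)+P(G,X)$. The sets $\fint I_F$ and $\fint I_G$ are 1-finely open, disjoint (since $F\cap G=\emptyset$ implies $I_F\cap I_G=\emptyset$) subsets of $\fint I_E$, both nonempty (by Lebesgue differentiation and Proposition~\ref{prop:IE and OE}). The core step is that
\[
N := \fint I_E\setminus(\fint I_F\cup\fint I_G) \subset \big(I_E\setminus(I_F\cup I_G)\big)\cup(I_F\setminus\fint I_F)\cup(I_G\setminus\fint I_G)
\]
is $\mathcal H$-null: the last two factors are $\mathcal H$-null by Proposition~\ref{prop:IE and OE}, while the first is contained in $\partial^*F\cap\partial^*G$ by a density-bookkeeping argument (a point in $I_E\setminus(I_F\cup I_G)$ must have $\Theta^*(F,x),\Theta^*(G,x)>0$ and hence $\Theta^*(X\setminus F,x),\Theta^*(X\setminus G,x)>0$ since $\Theta(E,x)=1$ and $E=F\sqcup G$), which is $\mathcal H$-null by Lemma~\ref{lem:measure theoretic boundaries}(i). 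Hence $N$ is $\capa_1$-null by \eqref{eq:null sets of Hausdorff measure and capacity}, so 1-thin everywhere. For any $x\in\fint I_E$, the complement $X\setminus(\fint I_F\cup\fint I_G)\subset(X\setminus\fint I_E)\cup N$ is a union of two 1-thin sets at $x$ (the first by 1-fine openness of $\fint I_E$), hence 1-thin at $x$ by subadditivity of capacity, so $x\in\fint(\fint I_F\cup\fint I_G) = \fint I_F\cup\fint I_G$. Thus $\fint I_E = \fint I_F\sqcup\fint I_G$, a 1-fine disconnection, contradicting our assumption.

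The main obstacle I foresee is the measure-to-capacity conversion in the \emph{only if} direction; my route bypasses Lemma~\ref{lem:continuity argument} by placing $x$ directly in the $\mathcal H$-null set $I_E\setminus\fint I_E$ via the identity $I_{\fint I_E}=I_E$ (which follows because $\fint I_E$ and $E$ differ by a $\frm$-null set). A secondary worry is the countable-union handling for $V'$, which dissolves once the pointwise estimates $\mathcal H(\partial^*V_1\cap\partial^*V_j)=0$ are in hand, via the inclusion $\partial^*V'\subset\bigcup_{j\ne 1}\partial^*V_j$ and countable $\mathcal H$-additivity.
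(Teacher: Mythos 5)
Your ``only if'' direction (decomposing $\fint I_E=\bigcup_j V_j$ and using two-sidedness to kill $\mathcal H(\partial^*V_1\cap\partial^*V_j)$) is essentially the paper's Part~2 and is sound, up to one small imprecision: the set-theoretic inclusion $\partial^*V'\subset\bigcup_{j\ne 1}\partial^*V_j$ can fail for countable unions. It does hold $\mathcal H$-a.e.\ here, because $\alpha\,\mathcal H(\partial^*V'\cap A)\le P(V',A)\le\sum_{j\ne1}P(V_j,A)$ by \eqref{eq:def of theta} and \eqref{eq:perimeter of union}, applied to $A=\partial^*V'\setminus\bigcup_{j\ne1}\partial^*V_j$; the paper sidesteps this by applying two-sidedness directly to the pair of unions $V'$, $V''$ rather than pairwise. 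This is cosmetic.

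The ``if'' direction, however, has a genuine gap at its final step. You correctly show that $N:=\fint I_E\setminus(\fint I_F\cup\fint I_G)$ is $\mathcal H$-null and that $X\setminus(\fint I_F\cup\fint I_G)$ is $1$-thin at every $x\in\fint I_E$. But thinness of the complement at $x$ does \emph{not} give $x\in\fint(\fint I_F\cup\fint I_G)$: by \eqref{eq:fine int}, $\fint A=A\setminus b_1(X\setminus A)\subset A$, so membership $x\in\fint I_F\cup\fint I_G$ is also required, and it fails precisely for $x\in N$. Your argument therefore only yields $\fint I_E=\fint I_F\sqcup\fint I_G\sqcup N'$ with $N'\subset N$ capacity-null, which does not contradict fine connectedness of $\fint I_E$. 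To finish, one must absorb each $x\in N'$ into one of the two finely open pieces while preserving fine openness, and this is possible at $x$ only if at least one of $\fint I_F,\fint I_G$ is $1$-thin at $x$; ruling out the bad case where both have positive capacity density at some $x\in N'$ is exactly the hard core of the theorem. In the paper this is handled by Lemma~\ref{lem:continuity argument}, the strong Cartan property (Theorem~\ref{thm:strong Cartan property}), the construction of the space $(F,d_M^F,\frm)$ in Proposition~\ref{prop:constructing the quasiconvex space}, and Proposition~\ref{prop:density points}, which together produce a point of $\partial^*F\cap\partial^*G$ that two-sidedness forbids. Equivalently, what you are implicitly invoking is that removing a $\capa_1$-null set cannot disconnect a $1$-fine domain --- this is Theorem~\ref{thm:fine domain} of the paper, which is itself proved with the same machinery and cannot be taken for granted.
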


\begin{proof}
    By Lemma \ref{lem:invariance} and Lemma \ref{lem:components invariance},
    and recalling that the two-sidedness property implies that the space is isotropic
    (see \cite[Lemma 6.2]{AFP}),
    we can assume that $X$ is geodesic.
	By Proposition \ref{prop:IE and OE},
	there exists a $\mathcal H$-negligible set $N\subset X$ such that
	\begin{equation}\label{eq:definition of N}
	\fint I_E=I_E\setminus N.
	\end{equation}
	
	\textbf{Part 1.}
    Suppose $V':=\fint I_E$ is $1$-finely connected. We prove that $E$ is indecomposable.
    
	Suppose instead that we had $V'=G\cup H$ for some sets $G,H$ with $G\cap H=\emptyset$,
	$\frm(G)>0$, $\frm(H)>0$, and $P(V',X)=P(G,X)+P(H,X)$.
	Without altering any of the properties given in the previous sentence, we can modify $G$ and $H$ in a set of
	$\frm$-measure zero in such a way that we remove from $H$ all points $x$ for which $\Theta^*(H,x)<1/2$
	and add them to $G$, and vice versa.
	By \eqref{eq:thinness and measure thinness} we have that
	$\Theta(V,x)=1$ for all $x\in V$, and so we end up with sets $G$ and $H$
	such that $\Theta^*(G,x)\ge 1/2$ for all $x\in G$ and $\Theta^*(H,x)\ge 1/2$ for all $x\in H$.
	
	Take $x_0\in G$. Since $V'=G\cup H$, we know that
	\[
	\lim_{r\to 0}\frac{\rcapa_1(B(x_0,r)\setminus (G\cup H),B(x_0,2r))}{\rcapa_1(B(x_0,r),B(x_0,2r))}=0.
	\]
    From the fact that $\Theta^*(G,x_0)\ge 1/2>0$, by
    \eqref{eq:thinness and measure thinness} we get
    \begin{equation}\label{eq:capacity of G}
	\limsup_{r\to 0}\frac{\rcapa_1(G\cap B(x_0,r),B(x_0,2r))}{\rcapa_1(B(x_0,r),B(x_0,2r))}>0.
	\end{equation}
	Suppose we had
	\begin{equation}\label{eq:capacity of H}
	\limsup_{r\to 0}\frac{\rcapa_1(H\cap B(x_0,r),B(x_0,2r))}{\rcapa_1(B(x_0,r),B(x_0,2r))}>0.
	\end{equation}
    From Lemma \ref{lem:continuity argument}, we get for some $\delta>0$
\begin{equation}\label{eq:capacity of G and H}
\limsup_{r\to 0}\frac{\min\{\rcapa_1(G\cap B(x,r),B(x,2r)),\rcapa_1(H\cap B(x,r),B(x,2r))\}}{\rcapa_1(B(x,r),B(x,2r))}>\delta.
\end{equation}
	We claim that 
	\begin{equation}\label{eq:H and G}
	\limsup_{r\to 0}\frac{\rcapa_1((\partial^*H\cap \partial^*G)\cap B(x_0,r),B(x_0,2r))}{\rcapa_1(B(x_0,r),B(x_0,2r))}>0.
	\end{equation}
	Suppose instead that
	\[
	\lim_{r\to 0}\frac{\rcapa_1((\partial^*H\cap \partial^*G)\cap B(x_0,r),B(x_0,2r))}{\rcapa_1(B(x_0,r),B(x_0,2r))}=0.
	\]
    By \eqref{eq:capa x positive} we know that either
    $x_0\notin \partial^*H\cap \partial^*G$ or $\capa_1(\{x_0\})=0$ (or both).
    \bigskip

\textbf{Case a: $x_0\notin \partial^*H\cap \partial^*G$.}
	By Lemma \ref{lem:open thin set}, we find an open set $W$ containing $\partial^*H\cap \partial^*G$
	and $X\setminus (G\cup H)$, such that $W$ is also $1$-thin at $x_0$.
	Choose $0<R<\frac{1}{32} \diam X$ sufficiently small that
    \[
	\frac{\frm(B(x_0,2R))}{\frm(X)}<C_0^{-1}
	\]
    and
	\[
	\sup_{0<r\le R}\frac{\rcapa_1(W\cap B(x_0,r),B(x_0,2r))}{\rcapa_1(B(x_0,r),B(x_0,2r))}
    <\min\left\{\frac{\delta}{9 C_d^2 C_r C' C''},\frac{1}{C_0 C_1C_dC_P}\right\},
	\]
	while with $B=B(x_0,R)$, by \eqref{eq:capacity of G and H} we can simultaneously arrange that
	\begin{equation}\label{eq:delta condition}
	\frac{\rcapa_1(H\cap \tfrac 12 B,B)}{\rcapa_1(\tfrac 12 B,B)}>\delta
    \quad\textrm{and}\quad \frac{\rcapa_1(G\cap \tfrac 12 B,B)}{\rcapa_1(\tfrac 12 B,B)}
    >\delta.
	\end{equation}
	Then we can choose a set $F\subset B$ as given in Proposition \ref{prop:constructing the quasiconvex space}.
	In particular, we know that $x_0\in \fint F$.
	By \eqref{eq:capacity of F} and \eqref{eq:cap estimate} we know that
	\[
	\frac{\rcapa_1(\tfrac 12 B\setminus F,B)}{\rcapa_1(\tfrac 12 B,B)}
    \le \frac{9C'C_r\rcapa_1(W\cap B,2B)}{(C''C_d^2)^{-1}\rcapa_1(B,2B)}
	\le \delta.
	\]
	By \eqref{eq:cap of fine closure}, also 
	\[
	\frac{\rcapa_1(\overline{\tfrac 12 B\setminus F}^1,B)}{\rcapa_1(\tfrac 12 B,B)}
	\le \delta.
	\]
    Due to this combined with \eqref{eq:delta condition}, we find points
    $x\in \tfrac 12 B\cap H\cap \fint F$ and $y\in \tfrac 12 B\cap G\cap \fint F$.
	Now $\Theta^*(H,x)\ge 1/2$,
	whereas $\Theta(X\setminus F,x)=0$ by \eqref{eq:thinness and measure thinness},
	 and so $\frm(H\cap F)>0$.
     Similarly $\frm(G\cap F)>0$. This concludes Case a.
\bigskip

    \textbf{Case b: $\capa_1(\{x_0\})=0$.}
	By Lemma \ref{lem:open thin set}, we find an open set $W'$ containing
    $(\partial^*H\cap \partial^*G)\setminus \{x_0\}$
	and $X\setminus (G\cup H)$, such that $W'$ is also $1$-thin at $x_0$.
	Choose $0<R<\frac{1}{32} \diam X$ sufficiently small that
	\[
	\sup_{0<r\le R}\frac{\rcapa_1(W'\cap B(x_0,r),B(x_0,2r))}{\rcapa_1(B(x_0,r),B(x_0,2r))}
    <\min\left\{\frac{\delta}{18 C_d^2 C_r C' C''},\frac{1}{C_0 C_1C_d C_P}\right\},
	\]
	while with $B=B(x_0,R)$, by \eqref{eq:capacity of G and H},
	\begin{equation}\label{eq:delta condition 2}
	\frac{\rcapa_1(H\cap \tfrac 12 B,B)}{\rcapa_1(\tfrac 12 B,B)}>\delta
    \quad\textrm{and}\quad \frac{\rcapa_1(G\cap \tfrac 12 B,B)}{\rcapa_1(\tfrac 12 B,B)}
    >\delta.
	\end{equation}
Using the fact that $\capa_1(\{x_0\})=0$, by \eqref{eq:Cap and cap estimate}
and \eqref{eq:outer capacity}
we can choose $W:=W'\cup B(x_0,s)$ for sufficiently small $s>0$ that  still
\[
\frac{\rcapa_1(W\cap B,2B)}{\rcapa_1(B,2B)}
<\min\left\{\frac{\delta}{9 C_d^2 C_r C' C''},\frac{1}{C_0 C_1C_d C_P}\right\}.
\]
	Then we can choose a set $F\subset B$ as given in Proposition
    \ref{prop:constructing the quasiconvex space alternative}.
	By \eqref{eq:capacity of F 2} and \eqref{eq:cap estimate} we know that
	\[
	\frac{\rcapa_1(\tfrac 12 B\setminus F,B)}{\rcapa_1(\tfrac 12 B,B)}
    \le  \frac{9C'C_r\rcapa_1(W\cap B,2B)}{(C''C_d^2)^{-1}\rcapa_1(B,2B)}
	\le \delta.
	\]
	By \eqref{eq:cap of fine closure}, also 
	\[
	\frac{\rcapa_1(\overline{\tfrac 12 B\setminus F}^1,B)}{\rcapa_1(\tfrac 12 B,B)}
	\le \delta.
	\]
	Due to this combined with \eqref{eq:delta condition 2}, we find points
    $x\in \tfrac 12 B\cap H\cap \fint F$ and $y\in \tfrac 12 B\cap G\cap \fint F$.
	Now $\Theta^*(H,x)\ge 1/2$,
	whereas $\Theta(X\setminus F,x)=0$ by \eqref{eq:thinness and measure thinness},
	 and so $\frm(H\cap F)>0$.
     Similarly $\frm(G\cap F)>0$. This concludes Case b.
     \bigskip
	
	Denote upper density in the space $(F,d_M^F,\frm)$ by $\Theta_F^*$.
	Define two new sets $G',H'$ by
	\[
	G':=\{x\in F\colon \Theta_F^*(G,x)\ge 1/2\}
	\quad\textrm{and}\quad
	H':=\{x\in F\setminus G'\colon \Theta_F^*(H,x)\ge 1/2\}.
	\]
	Then $G'\cup H'=F$, and by Lebesgue's differentiation theorem, which is available since the measure $\frm$
	is locally doubling in $(F,d_M^F,\frm)$
    (see e.g.\ \cite[p. 77]{HKST15}), we have $\frm(G\Delta G')=0$ and $\frm(H\Delta H')=0$.
    Thus $\frm(G')>0$ and $\frm(H')>0$.

	By connectedness of $F$, there is a boundary point $y\in \partial_F H'=\partial_F G'$.
	Let $s>0$ be so small that $B(y,7s)\subset B(x_0,R)$.
	By the definition of $G'$ and $H'$,
	\[
	0<\frac{\frm(B_{F}(y,s)\cap G')}{\frm(B_{F}(y,s))}<1.
	\]
	By Proposition \ref{prop:constructing the quasiconvex space}
	we know that
	$\frm$ has the doubling property
    \[
    \frm(B_{F}(z,2r))\le C_1 C_d^2\frm(B_{F}(z,r))
    \]
    for every $z\in B(y,6s)$ and $0<r\le s$, and then also for 
    every $z\in B_F(y,6s)$ and $0<r\le s$.
	Moreover, $\overline{B}_F (y,6s)$ is compact.
	Then by Proposition \ref{prop:density points}, we find a point $z\in B_F(y,6s)$ such that
	\[
	\frac{1}{4 (C_1 C_d^2)^{12}}\le \liminf_{r\to 0}\frac{\frm(G'\cap B_{F}(z,r))}{\frm(B_{F}(z,r))}
	\le \limsup_{r\to 0}\frac{\frm(G'\cap B_{F}(z,r))}{\frm(B_{F}(z,r))}
	\le 1-\frac{1}{4 (C_1 C_d^2)^{12}},
	\]
	and the same inequalities for $H'$.
	By \eqref{eq:BF lower bound} and since $\frm(G\Delta G')=0$, we obtain
	\[
	\frac{1}{4 (C_1 C_d^2)^{13}}\le \liminf_{r\to 0}\frac{\frm(G\cap B(z,r))}{\frm(B(z,r))}
	\le \limsup_{r\to 0}\frac{\frm(G\cap B(z,r))}{\frm(B(z,r))}
	\le 1-\frac{1}{4  (C_1 C_d^2)^{13}},
	\]
	and the same inequalities for $H$.
	Thus we have found a point belonging to $\partial^*G\cap \partial^*H\cap F$,
	a contradiction with the construction of $F$ in both Cases a and b.
	Thus \eqref{eq:H and G} holds.
    By \eqref{eq:cap and H measure},
    in particular we get $\mathcal H(\partial^*H\cap \partial^*G)>0$,
	a contradiction with $P(V',X)=P(G,X)+P(H,X)$ by Lemma \ref{lem:measure theoretic boundaries}(i).
	Thus \eqref{eq:capacity of H} fails, and since we had
    $x_0\in \fint (G\cup H)$, we get $x_0\in \fint G$. Thus $G=\fint G$ and similarly $H=\fint H$.
	Now we have that $V'$ is the disjoint union of the $1$-finely open sets $G$ and $H$, a contradiction with the fact that
	$V'$ is connected in the $1$-fine topology.
	
	Thus $E$ is indecomposable.
    \bigskip

\textbf{Part 2.}

Now we consider the implication: if $E$ is indecomposable, then $\fint I_E$ is 1-finely connected.
First we make some general observations.

	Just as in \eqref{eq:countable union}, we have
	\begin{equation}\label{eq:countable union 2}
		\fint I_E=\bigcup_{j}V_j,
	\end{equation}
	where the $V_j$'s are pairwise disjoint $1$-finely open, $1$-finely connected sets.
    Consider two unions of these,
denoted by $V'=\bigcup_{j\in I}V_j$ and $V''=\bigcup_{j\in I'}V_j$, with $I\cap I'=\emptyset$.
 By Proposition \ref{prop:finite perimeter}, $V'$ and $V''$ are sets of finite perimeter.
Let $N^*$ be the $\mathcal H$-negligible exceptional set for $V' ,V''$ from the
definition of the two-sidedness property.

Suppose $x\in(\partial^* V'\cap \partial^* V'')\setminus (N\cup N^*)$.
From the two-sidedness property we get
$x\in I_{V'\cup V''}$, and so $x\in I_{\fint I_E}=I_E$
(recall Proposition \ref{prop:IE and OE}).
Since we had $x\notin N$, we get $x\in \fint I_E$ (recall \eqref{eq:definition of N}).
By \eqref{eq:countable union 2} it follows that $x$ must be in some set $V_k$ not contained in the unions
$V'$ and $V''$, and then by \eqref{eq:thinness and measure thinness} we get $x\in I_{V_k}$.
However, this contradicts the fact that simultaneously $x\in \partial^*V'$;
recall that $V',V'',V_k$ are pairwise disjoint.

We conclude that $\mathcal H(\partial^* V'\cap \partial^* V'')=0$.
By Lemma \ref{lem:measure theoretic boundaries} we know that 
\[
P(V'\cup V'',X)=P(V',X)+P(V'',X).
\]
Hence, if the union in \eqref{eq:countable union 2} contains more than one set $V_j$, we get
\begin{equation}\label{eq:V j}
P\Big(\bigcup_{j}V_j,X\Big)=P\Big(\bigcup_{j\neq \{k\}}V_j,X\Big)+P(V_k,X),
\end{equation}
showing that $\bigcup_{j}V_j$ is decomposable.
Thus, if $E$ is indecomposable, then $\fint I_E$ is $1$-finely connected.
\end{proof}

\begin{theorem}\label{thm:main}
	Suppose $X$ has the two-sidedness property. Let $E\subset X$ with $P(E,X)<\infty$.
	Then the $1$-finely connected components $\{V_j\}$ of $\fint I_E$
	are indecomposable and satisfy
    \[
    P(E,X)=\sum_{j}P(V_j,X).
    \]
    In other words, the sets $V_j$ are the essential connected components of $\fint I_E$.
\end{theorem}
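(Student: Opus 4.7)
The plan is to apply Theorem~\ref{thm:iff} componentwise, combined with the perimeter identity already produced inside the proof of Theorem~\ref{thm:iff}. By Proposition~\ref{prop:finite perimeter}, every $V_j$ and every union $\bigcup_{j\in I}V_j$ is a set of finite perimeter, while $\frm(V_j)>0$ follows from \eqref{eq:thinness and measure thinness}. Moreover $\frm(E\Delta\fint I_E)=0$ (since $\frm(E\Delta I_E)=0$ by Lebesgue differentiation and $I_E\setminus\fint I_E$ is $\mathcal H$-null by Proposition~\ref{prop:IE and OE}), so $P(E,X)=P(\fint I_E,X)$.

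The crux is to show that each $V_j$ is indecomposable, for which I plan to apply Theorem~\ref{thm:iff} to $V_j$ itself and verify that $\fint I_{V_j}$ is $1$-finely connected. Since $V_j$ is $1$-finely open, \eqref{eq:thinness and measure thinness} gives $V_j\subset I_{V_j}$, and hence $V_j\subset\fint I_{V_j}$. The key observation is that the residual set $M_j:=\fint I_{V_j}\setminus V_j$ satisfies $\capa_1(M_j)=0$: any $x\in M_j$ has $\Theta(V_j,x)=1$, which excludes $x$ from every other $V_k$ (otherwise \eqref{eq:thinness and measure thinness} applied to $V_k$ would force $\Theta(V_k,x)=1$, impossible since $V_k\cap V_j=\emptyset$); thus $x\in I_{V_j}\setminus\fint I_E\subset I_E\setminus\fint I_E$, a set that is $\mathcal H$-null by Proposition~\ref{prop:IE and OE} and therefore $\capa_1$-null by \eqref{eq:null sets of Hausdorff measure and capacity}. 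Now, if $\fint I_{V_j}=A\cup B$ were a disjoint union of nonempty $1$-finely open sets, then $V_j\cap A$ and $V_j\cap B$ would be $1$-finely open and partition the $1$-finely connected set $V_j$, forcing (say) $V_j\subset A$ and $B\subset M_j$, so $\capa_1(B)=0$; then for any $x\in B$, both $B$ (by \eqref{eq:cap and H measure} applied to $B\cap B(x,r)$) and $X\setminus B$ (by $1$-fine openness of $B$) would be $1$-thin at $x$, contradicting the subadditivity inequality $\rcapa_1(B(x,r),B(x,2r))\le\rcapa_1(B\cap B(x,r),B(x,2r))+\rcapa_1(B(x,r)\setminus B,B(x,2r))$. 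Hence $B=\emptyset$, the set $\fint I_{V_j}$ is $1$-finely connected, and Theorem~\ref{thm:iff} delivers indecomposability of $V_j$.

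For the perimeter identity, the displayed equation \eqref{eq:V j} produced inside the proof of Theorem~\ref{thm:iff} (which uses the two-sidedness property through Lemma~\ref{lem:measure theoretic boundaries}(ii)) gives $P(\fint I_E,X)=P\bigl(\bigcup_{j\neq k}V_j,X\bigr)+P(V_k,X)$ for every $k$. Iterating this over any finite subcollection of indices and combining with the subadditive bound \eqref{eq:perimeter of union} yields $P(E,X)=\sum_j P(V_j,X)$. The uniqueness clause of Theorem~\ref{thm:essential connected components} then identifies $\{V_j\}$ with the essential connected components of $E$. The main obstacle is the indecomposability step, in which the $1$-fine connectedness of $V_j$ must be transferred to the a priori larger set $\fint I_{V_j}$; this hinges on the subtle fact that a nonempty $1$-finely open set cannot have vanishing $1$-capacity.
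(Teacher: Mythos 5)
Your proof is correct and follows essentially the same route as the paper: indecomposability of each $V_j$ via Theorem \ref{thm:iff}, and the perimeter identity by iterating \eqref{eq:V j} together with the subadditivity \eqref{eq:perimeter of union}. The one place you go beyond the paper is in verifying that $\fint I_{V_j}$ is $1$-finely connected before invoking Theorem \ref{thm:iff} (the paper applies that theorem without comment), and your argument for this --- showing $\capa_1(\fint I_{V_j}\setminus V_j)=0$ and that a nonempty $1$-finely open set cannot be $\capa_1$-negligible --- is correct and a worthwhile clarification.
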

\begin{proof}
By Theorem \ref{thm:iff}, each $V_j$ is indecomposable.
    Applying \eqref{eq:V j} iteratively, we get
    \[
    \sum_{j}P(V_j,X)\le P\Big(\bigcup_{j}V_j,X\Big),
    \]
    while the opposite inequality follows from \eqref{eq:perimeter of union}.
    Thus we have
    \[
    P(E,X)=P\Big(\bigcup_{j}V_j,X\Big) = \sum_{j}P(V_j,X).
    \]
\end{proof}

Theorem \ref{thm:main} gives a new proof of the existence of the essential connected components of $E$;
recall Theorem \ref{thm:essential connected components}.

\begin{proof}[Proof of Theorem \ref{thm:iff intro}]
This follows immediately from Theorems \ref{thm:iff} and \ref{thm:main}.
\end{proof}

\begin{example}\label{eq:three spider}
	Consider the ``$3$-spider'' $S_3$ defined by
	\[
	S_3:=\{o\}\sqcup (R_1\cup R_2\cup R_3),
	\quad\textrm{where }R_j:=\{j\}\times (0,\infty)\ \textrm{for }j=1,2,3.
	\]
	Here the point $o$ is the \emph{origin} of $S_3$, while $R_1,R_2,R_3$ are the \emph{rays} of $S_3$,
	and we identify $o$ with $(j,0)$ for each $j=1,2,3$.
	We consider the distance 
	\[
	d((j,s),(k,t)):=
	\begin{cases}
	|s-t|\quad &\textrm{if }j=k,\\
	t+s\quad &\textrm{if }j\neq k.
	\end{cases}
	\]
	Then $(S_3,d,\mathcal H^1)$ is a complete, doubling (in fact, Ahlfors $1$-regular) space supporting a $1$-Poincar\'e
	inequality.
	As noted in \cite[Lemma 1.27]{BPR}, this space is isotropic.
	However, it does not have the two-sidedness property;
	this can be seen from considering $E_1=R_1$, $E_2=R_2$, so that $o\in \partial^*E_1\cap \partial^*E_2$,
	but also $\Theta(X\setminus (E_1\cup E_2),o)=1/3>0$.
	
	Now consider $E:=E_1\cup E_2$.
	This is an indecomposable set (see \cite[Lemma 1.26]{BPR} for details).
	However, now $I_E=E\setminus \{0\}$ so that $\fint I_E=I_E$
	and the connected components of this set in the $1$-fine topology are $E_1$ and $E_2$.
	
	This shows that the assumption of the two-sidedness property in Theorem \ref{thm:main}
    cannot be removed.
\end{example}

\section{Fine domains}\label{sec:fine domains}

We apply the techniques developed in the previous sections to prove some results
on $1$-fine domains, that is, $1$-finely open sets that are connected in the $1$-fine topology.
Fine domains in the case $1<p<\infty$ have been studied in 
\cite{BB-UR,Lat}, see also references therein for previous works.

Recall the definition of the measure topology from Definition \ref{def:measure topology}.
In \cite[p. 225]{Lat}, it is asked
(in the case $1<p<\infty$) whether a $p$-fine domain is also a measure topology domain.
Now we can answer this question in the case $p=1$.

\begin{theorem}
Suppose $V'\subset X$ is a $1$-fine domain.
Then $V'$ is also a measure topology domain.
\end{theorem}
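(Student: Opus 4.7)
The plan is to address the openness and the connectedness of $V'$ in the measure topology separately. The openness is immediate: since $V'$ is $1$-finely open, the complement $X\setminus V'$ is $1$-thin at every $x\in V'$, which via \eqref{eq:thinness and measure thinness} gives $\Theta(X\setminus V',x)=0$; by Definition \ref{def:measure topology}, this means $V'$ is open in the measure topology.

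For the connectedness I would argue by contradiction. Suppose $V'=A\sqcup B$ is a disconnection in the measure topology, with $A,B\subseteq V'$ nonempty, disjoint, and each measure-topology open in $X$ (the latter is automatic since $V'$ itself is). Using Lemma \ref{lem:invariance}, together with the observation that the density-zero condition is preserved under bi-Lipschitz changes of metric (via doubling of $\frm$), I can replace the metric on $X$ by a bi-Lipschitz equivalent geodesic one so that Proposition \ref{prop:constructing the quasiconvex space} is available. Now fix any $x_0\in A$: since $V'$ is $1$-finely open, $X\setminus V'$ is $1$-thin at $x_0$, so Lemma \ref{lem:open thin set} yields an open set $W\supseteq X\setminus V'$ that is still $1$-thin at $x_0$, and Proposition \ref{prop:constructing the quasiconvex space} then produces a metric-connected, $1$-finely connected set $F\subseteq V'$ with $x_0\in\fint F$ and satisfying the density-comparison estimate \eqref{eq:BF lower bound}.

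The heart of the argument is the claim that necessarily $F\subseteq A$, to be proved by a direct adaptation of the Mazurkiewicz-density argument in the proof of Proposition \ref{prop:open basis}. If instead $F\cap B\ne\emptyset$, then by picking a boundary point $y\in F$ between $F\cap A$ and $F\cap B$ in the $d_M^F$-topology together with nearby $z_1\in A\cap B_F(y,s)$ and $z_2\in B\cap B_F(y,s)$, the measure-topology openness of $A$ and $B$ gives density one of $A$ at $z_1$ and of $B$ at $z_2$ in the ambient $X$-sense, which transfers to the $F$-sense using \eqref{eq:BF lower bound}; hence both $\frm(A\cap B_F(y,s))$ and $\frm(B\cap B_F(y,s))$ lie strictly in $(0,\frm(B_F(y,s)))$. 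Proposition \ref{prop:density points} then produces $z\in F$ with strictly intermediate $F$-density for $A$, and transferring back via \eqref{eq:BF lower bound} yields strictly intermediate $X$-density for both $A$ and $B$ at $z$. Since $z\in F\subseteq V'=A\sqcup B$, the point $z$ lies in $A$ or in $B$; yet measure-topology openness forces density one of that set at $z$, a contradiction. Hence $F\subseteq A$, so $x_0\in\fint F\subseteq\fint A$; as $x_0\in A$ was arbitrary, $A$ is $1$-finely open, and symmetrically so is $B$, so $V'=A\sqcup B$ is a $1$-fine disconnection of $V'$, contradicting that $V'$ is a $1$-fine domain. The main obstacle is verifying that the Mazurkiewicz-density step transfers cleanly to the present setting; the substantive point is that the argument in Proposition \ref{prop:open basis} uses the $1$-fine openness of $V_1,V_2$ only through the fact that those sets have density one at their own points, which is precisely the content of measure-topology openness of $A$ and $B$ here.
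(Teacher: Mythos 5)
Your proof is correct, but it routes the connectedness step differently from the paper. The paper takes the measure-topology disconnection $V'=G\sqcup H$ with $G=I_G$, $H=I_H$ and reruns, verbatim, the capacity-density argument from Part 1 of the proof of Theorem \ref{thm:iff}: Lemma \ref{lem:continuity argument}, the claim that $\partial^*G\cap\partial^*H$ has positive capacity density at $x_0$, and the two cases depending on whether $\capa_1(\{x_0\})=0$, concluding that $H$ is $1$-thin at every $x_0\in G$ and hence $G=\fint G$, $H=\fint H$. You instead adapt the proof of Proposition \ref{prop:open basis}: build the connected set $F$ with $x_0\in\fint F\subset F\subset V'$ and show directly that $F$ cannot meet both pieces, since a Mazurkiewicz boundary point produced by Proposition \ref{prop:density points} would have density of both $A$ and $B$ bounded away from $0$ and $1$, which is incompatible with $z\in F\subset A\sqcup B$ together with measure-topology openness; this gives $F\subset A$, hence $x_0\in\fint A$. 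Your closing observation is exactly the right one: the argument of Proposition \ref{prop:open basis} uses the fine openness of the two pieces only through density one at their own points, which is precisely what measure-topology openness supplies here (and is strictly stronger than the $\Theta^*\ge 1/2$ normalization available in the setting of Theorem \ref{thm:iff}, which is why the paper's general argument needs the extra capacity machinery). Both routes rest on the same two pillars, Proposition \ref{prop:constructing the quasiconvex space} and Proposition \ref{prop:density points}, and both end by upgrading the measure-topology disconnection to a $1$-fine one; yours is shorter and avoids Lemma \ref{lem:continuity argument} and the case analysis entirely, while the paper's choice buys only the convenience of reusing text already written for Theorem \ref{thm:iff}.
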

\begin{proof}
By \eqref{eq:thinness and measure thinness}, $V'$ is open in the measure topology.

To prove that $V'$ is connected in the measure topology, 
suppose instead that we had $V'\subset G\cup H$ for some sets $G=I_G,H=I_H$ with $G\cap H=\emptyset$,
	$\frm(G\cap V')>0$, $\frm(H\cap V')>0$.
    Using \eqref{eq:thinness and measure thinness} again, we can in fact assume that
    $G,H\subset V'$.
Next we follow verbatim the proof of Theorem \ref{thm:iff} Part 1  starting from line 10
(``Take $x_0\in G$...'') until line -7 (``Thus \eqref{eq:H and G} holds.'').
Indeed, \eqref{eq:H and G} is a contradiction with the fact that $x_0$ belongs to the
$1$-finely open set $V'\subset I_G\cup I_H$.
Then we conclude (just as in the last lines of the proof of Theorem \ref{thm:iff} Part 1)
 that \eqref{eq:capacity of H} fails, and since we had
    $x_0\in \fint (G\cup H)$, we get $x_0\in \fint G$. Thus $G=\fint G$ and similarly $H=\fint H$.
	Now we have that $V'$ is the disjoint union of the $1$-finely open sets $G$ and $H$, a contradiction with the fact that
	$V'$ is connected in the $1$-fine topology.
\end{proof}

The main result of \cite{Lat} states, in the case $1<p<\infty$,
that for a $p$-fine domain $V'\subset \R^n$ and a $\capa_p$-negligible set
$N\subset \R^n$, the set $V'\setminus N$ is also a $p$-fine domain.
Now we prove the analogous result in the case $p=1$.
Recall the definition of the $1$-base $b_1 A$ from Definition \ref{def:1 fine topology}.

\begin{theorem}\label{thm:fine domain}
Suppose $V'\subset X$ is a $1$-fine domain and suppose $N\subset V'$ with $\capa_1(N)=0$.
Then $V'\setminus N$ is also a $1$-fine domain.
\end{theorem}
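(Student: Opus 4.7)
The plan is to adapt the proof of the preceding theorem (the measure topology domain result) to the $1$-fine setting, ultimately invoking Case b of the proof of Theorem \ref{thm:iff} Part 1 at the key step. First, $V'\setminus N$ is $1$-finely open: since $\capa_1(N)=0$ implies via \eqref{eq:cap and H measure} and \eqref{eq:null sets of Hausdorff measure and capacity} that $N$ is $1$-thin at every point, subadditivity of $\rcapa_1$ together with the $1$-thinness of $X\setminus V'$ on $V'$ gives that $(X\setminus V')\cup N$ is $1$-thin at each point of $V'\setminus N$.

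For $1$-fine connectedness I argue by contradiction: suppose $V'\setminus N = G\sqcup H$ with $G,H$ disjoint nonempty $1$-finely open. Partition $V'\cap N$ by setting $N_G:=\{x\in V'\cap N: H\text{ is }1\text{-thin at }x\}$ and $N_H:=\{x\in V'\cap N: G\text{ is }1\text{-thin at }x\}$. These are disjoint: if $x\in N_G\cap N_H$, then $G\cup H=V'\setminus N$ would be $1$-thin at $x$, yet $\Theta(V'\setminus N,x)=\Theta(V',x)=1$ since $\frm(N)=0$, contradicting \eqref{eq:thinness and measure thinness}. Set $\tilde G:=G\cup N_G$ and $\tilde H:=H\cup N_H$. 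A direct check shows both are $1$-finely open: for $x\in G$, $X\setminus\tilde G\subset X\setminus G$ is $1$-thin at $x$, and for $x\in N_G$, $X\setminus\tilde G\subset(X\setminus V')\cup H\cup N$ is a union of three sets each $1$-thin at $x$. Provided the ``bad set'' $N_{GH}:=\{x\in V'\cap N: G\text{ and }H\text{ are both not }1\text{-thin at }x\}$ is empty, we conclude $V'=\tilde G\sqcup\tilde H$, contradicting $1$-fine connectedness of $V'$.

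The main obstacle is therefore showing $N_{GH}=\emptyset$. Suppose for contradiction $x_0\in N_{GH}$. The inclusion $B(x_0,r)\setminus(G\cup H)\subset(X\setminus V')\cup N$ and subadditivity show that the complement of $G\cup H$ has zero $\rcapa_1$-density at $x_0$, and since $G, H$ are non-$1$-thin at $x_0$ by hypothesis, Lemma \ref{lem:continuity argument} yields $\delta>0$ bounding below the smaller of $\rcapa_1(G\cap B(x_0,r),B(x_0,2r))$ and $\rcapa_1(H\cap B(x_0,r),B(x_0,2r))$ at arbitrarily small scales. Since $\capa_1(\{x_0\})=0$ (as $x_0\in N$), I now run Case b of the proof of Theorem \ref{thm:iff} Part 1 essentially verbatim: under the working assumption that $\partial^*G\cap\partial^*H$ is $1$-thin at $x_0$, apply Lemma \ref{lem:open thin set} to $(X\setminus V')\cup(N\setminus\{x_0\})\cup(\partial^*G\cap\partial^*H)$ (which is $1$-thin at $x_0$ and avoids $x_0$) to obtain an open $1$-thin set $W'$, and augment with a small ball $B(x_0,s)$ as in Case b so that the resulting $W$ has appropriately small capacity ratio at scale $R$; Proposition \ref{prop:constructing the quasiconvex space alternative} then produces a set $F$ containing points of both $G$ and $H$ in $\fint F\cap\tfrac12 B$, and Proposition \ref{prop:density points} applied in the Mazurkiewicz metric on $F$ yields a point of $\partial^*G\cap\partial^*H$ inside $F\subset B\setminus W$, contradicting $W\supset\partial^*G\cap\partial^*H$. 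Hence $\partial^*G\cap\partial^*H$ is \emph{not} $1$-thin at $x_0$. On the other hand, disjointness and $1$-fine openness of $G, H$ together with \eqref{eq:thinness and measure thinness} force $\partial^*G\cap\partial^*H$ to be disjoint from $G\cup H$, so $\partial^*G\cap\partial^*H\subset(X\setminus V')\cup(V'\cap N)$, which \emph{is} $1$-thin at $x_0$---a contradiction that closes the argument.
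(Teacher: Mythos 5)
Your proof is logically correct and uses the same machinery as the paper (Lemma \ref{lem:continuity argument}, Proposition \ref{prop:constructing the quasiconvex space alternative}, Proposition \ref{prop:density points}), but the final contradiction is reached by a detour that the paper avoids. Your decomposition of $N$ into $N_G$, $N_H$, $N_{GH}$ is exactly the paper's decomposition via $N\setminus b_1 V_1$, $N\setminus b_1 V_2$, and $N\cap b_1 V_1\cap b_1 V_2$, and the ``patching'' argument $V'=\tilde G\sqcup\tilde H$ matches the paper's $V_1',V_2'$ construction. Where you diverge is the case $x_0\in N_{GH}$: you reintroduce the set $\partial^*G\cap\partial^*H$, include it in the cutoff set $W$, and derive the contradiction from $z\in\partial^*G\cap\partial^*H\cap F$ with $W\supset\partial^*G\cap\partial^*H$ and $F\cap W=\emptyset$. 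The paper instead omits $\partial^*G\cap\partial^*H$ from $W$ entirely: since $W\supset(X\setminus V')\cup N$ already, one automatically has $F\subset V'\setminus N=G\cup H$, and the density-point $z\in F$ with $\Theta^*(G,z),\Theta^*(H,z)\in(0,1)$ cannot lie in the $1$-finely open set $G$ (which would force $\Theta(G,z)=1$ by \eqref{eq:thinness and measure thinness}) nor in $H$, contradicting $z\in F\subset G\cup H$ directly. This is cleaner: you never need to argue that $\partial^*G\cap\partial^*H$ is $1$-thin at $x_0$, and you never need the extra inclusion in $W$.

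Two smaller remarks. First, the ``working assumption that $\partial^*G\cap\partial^*H$ is $1$-thin at $x_0$'' is a red herring in your own write-up: you prove at the end (and should state at the start) that $\partial^*G\cap\partial^*H\subset (X\setminus V')\cup(V'\cap N)$, which is already $1$-thin at $x_0$; there is no assumption to make, only a fact to record. As written, the argument ``the working assumption leads to contradiction, hence $\partial^*G\cap\partial^*H$ is not $1$-thin; but it is $1$-thin, contradiction'' is logically valid but wraps a direct contradiction in an unnecessary indirection. Second, you apply Lemma \ref{lem:open thin set} to $(X\setminus V')\cup(N\setminus\{x_0\})\cup(\partial^*G\cap\partial^*H)$ and claim it ``avoids $x_0$''; but $x_0\in N_{GH}$ does not preclude $x_0\in\partial^*G\cap\partial^*H$, so you should write $(\partial^*G\cap\partial^*H)\setminus\{x_0\}$ (as Case b of the proof of Theorem \ref{thm:iff} does), and then recover $x_0$ in $W$ by adding the small ball $B(x_0,s)$ using $\capa_1(\{x_0\})=0$. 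Neither of these is a fatal gap; they are presentation issues.
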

\begin{proof}
By Lemma \ref{lem:invariance} and the discussion above it, we can assume that $X$ is geodesic.

By \eqref{eq:null sets of Hausdorff measure and capacity} and \eqref{eq:cap and H measure}, we have that
$V'\setminus N$ is also $1$-finely open. It remains to show that $V'\setminus N$ is $1$-finely connected.

Suppose instead that there are disjoint $1$-finely open $V_1,V_2\subset X$
with $V_1\cup V_2\supset V'\setminus N$.
If
\[
N= (N\setminus b_1 V_1)\cup (N\setminus b_1 V_2),
\]
then, noting that $b_1 N=\emptyset$ by \eqref{eq:cap and H measure},
we can define $V_1':=V_1\cup (N\setminus b_1 V_2)$
and
$V_2':=V_2\cup (N\setminus b_1 V_1)$, which are disjoint $1$-finely open sets
with $V'=V_1'\cup V_2'$, a contradiction.

Thus we can assume that there are $x_0\in N$ and $\delta>0$ with
\[
\limsup_{r\to 0}\frac{\rcapa_1(V_1\cap B(x_0,r),B(x_0,2r))}{\rcapa_1(B(x_0,r),B(x_0,2r))}>5 C_d^2 C' C'' \delta
\]
and
\[
\limsup_{r\to 0}\frac{\rcapa_1(V_2\cap B(x_0,r),B(x_0,2r))}{\rcapa_1(B(x_0,r),B(x_0,2r))}>5 C_d^2 C' C'' \delta.
\]
By Lemma \ref{lem:open thin set}, we find an open set $W'\supset (X\setminus V')\cup (N\setminus\{x_0\})$
that is $1$-thin at $x_0$.
Choose $0<R<\frac{1}{32} \diam X$ such that
\[
\sup_{0<r\le R}\frac{\rcapa_1(W'\cap B(x_0,r),B(x_0,2r))}{\rcapa_1(B(x_0,r),B(x_0,2r))}
<\min\Big\{\frac{\delta}{9C_d^2 C_r C' C''},\frac{1}{C_0 C_1 C_d C_P}\Big\};
\]
denoting $B=B(x_0,R)$, by Lemma \ref{lem:continuity argument} we can simultaneously get
\begin{equation}\label{eq:bigger than delta}
\frac{\min\{\rcapa_1(V_1\cap \tfrac 12 B,B),\rcapa_1(V_2\cap \tfrac 12 B,B)\}}{\rcapa_1(\tfrac 12 B,B)}>\delta.
\end{equation}
By \eqref{eq:null sets of Hausdorff measure and capacity} we have $\mathcal H(\{x_0\})=0$,
and so by \eqref{eq:cap and H measure}, \eqref{eq:outer capacity} we can
let $W:=W'\cup B(x_0,s)$ for sufficiently small $s>0$ that  still
\[
\frac{\rcapa_1(W\cap B,2B)}{\rcapa_1(B,2B)}
<\min\Big\{\frac{\delta}{9C_d^2 C_r C' C''},\frac{1}{C_0 C_1 C_d C_P}\Big\}.
\]
We find a connected set $F\subset B\setminus W\subset V'$
given by Proposition \ref{prop:constructing the quasiconvex space alternative};
we have
\begin{align*}
\rcapa_1(\tfrac 12 B\setminus F,B)
&\le 9C' C_r \rcapa_1(W\cap B,2B)\quad\textrm{by }\eqref{eq:capacity of F 2}\\
&<\frac{\delta}{C_d^2 C''}\rcapa_1(B,2B)\\
&\le \delta\rcapa_1(\tfrac 12 B,B)\quad\textrm{by }\eqref{eq:cap estimate}.
\end{align*}
We have $F\subset V_1\cup V_2$, where by \eqref{eq:bigger than delta} and the above,
$V_1\cap F$ and  $V_2\cap F$ are both nonempty.
By connectedness of $F$ (in the metric $d$-topology and hence also
in the $d_M^F$-topology), there is a boundary point $y\in \partial_F V_1$,
i.e.\ a point in the boundary of the set $V_1$ in the metric space
$(F,d^F_M)$.
Let $s>0$ be so small that $B(y,7s)\subset B(x_0,R)$.
By
\eqref{eq:thinness and measure thinness} and \eqref{eq:BF lower bound}, we have
\[
0<\frac{\frm(V_1\cap B_{F}(y,s))}{\frm(B_{F}(y,s))}<1.
\]
By Proposition \ref{prop:constructing the quasiconvex space}
we know that
$\frm$ has the doubling property
\[
\frm(B_{F}(z,2r))\le C_1 C_d^2\frm(B_{F}(z,r))
\]
for every $z\in B(y,6s)$ and $0<r\le s$, and then also
for every $z\in B_F(y,6s)$ and $0<r\le s$.
Moreover, $\overline{B}_F (y,6s)$ is compact.
Then by Proposition \ref{prop:density points}, we find a point $z\in B_F(y,6s)$
(in particular, $z\in F$)
such that
\[
\frac{1}{4 (C_1 C_d^2)^{12}}\le \liminf_{r\to 0}\frac{\frm(V_1\cap B_{F}(z,r))}{\frm(B_{F}(z,r))}
\le \limsup_{r\to 0}\frac{\frm(V_1\cap B_{F}(z,r))}{\frm(B_{F}(z,r))}
\le 1-\frac{1}{4 (C_1 C_d^2)^{12}}.
\]
By \eqref{eq:BF lower bound}, we obtain
\[
\frac{1}{4 (C_1 C_d^2)^{13}}\le \liminf_{r\to 0}\frac{\frm(V_1\cap B(z,r))}{\frm(B(z,r))}
\le \limsup_{r\to 0}\frac{\frm(V_1\cap B(z,r))}{\frm(B(z,r))}
\le 1-\frac{1}{4  (C_1 C_d^2)^{13}}.
\]
The same inequalities hold for $V_2$.
By \eqref{eq:thinness and measure thinness} we have
that $z\notin V_1\cup V_2$, a contradiction
with the fact that we had $F\subset V_1\cup V_2$.
Thus $V'\setminus N$ is $1$-finely connected.
\end{proof}

\begin{proposition}\label{prop:Gi to zero}
Consider a decreasing sequence of sets $G_i\subset X$, $G_{i+1}\subset G_i$ for all $i\in\N$,
with $\capa_1(G_i)\to 0$.
Let $V'\subset X$ be a $1$-fine domain.
Then there is an increasing sequence of $1$-fine domains $V_i\subset V'\setminus G_i$ such that
\[
\capa_1\left(V'\setminus \bigcup_{i=1}^{\infty}V_i\right)=0.
\]
\end{proposition}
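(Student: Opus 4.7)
The plan is to replace each $G_i$ by its $1$-fine closure, construct the $V_i$ inductively as nested $1$-fine connected components through a fixed base point, and then use local connectedness of the $1$-fine topology together with the capacity-theoretic machinery from Section~\ref{sec:contruct space} to show these components exhaust $V'$ modulo a set of $1$-capacity zero.

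First, set $H_i := \overline{G_i}^1$. Then each $H_i$ is $1$-finely closed, $(H_i)$ is decreasing (by monotonicity of the $1$-fine closure), and $\capa_1(H_i) = \capa_1(G_i) \to 0$ by \eqref{eq:cap of fine closure}. Let $H := \bigcap_i H_i$; then $\capa_1(H) = 0$, so by Theorem~\ref{thm:fine domain} the set $V^* := V' \setminus H$ is a $1$-fine domain, and $V^* = \bigcup_i(V' \setminus H_i)$ is an increasing union of $1$-finely open subsets of $V'$.

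Second, by the local connectedness of the $1$-fine topology (Proposition~\ref{prop:open basis}), each $V' \setminus H_i$ decomposes into $1$-finely open $1$-fine connected components. Fix a base point $x_0 \in V^*$; for all $i$ large enough that $x_0 \notin H_i$, let $V_i$ be the $1$-fine connected component of $V' \setminus H_i$ containing $x_0$ (and take $V_i := \emptyset$ for the remaining finitely many indices, which are still $1$-fine domains trivially contained in $V' \setminus G_i$). Since $V_i$ is $1$-finely connected, contains $x_0$, and is contained in $V' \setminus H_{i+1}$, it lies inside the $1$-fine connected component of $V' \setminus H_{i+1}$ through $x_0$, which is $V_{i+1}$. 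Hence $V_i \subset V_{i+1} \subset V' \setminus H_i \subset V' \setminus G_i$, each $V_i$ is a $1$-fine domain, and $V^{**} := \bigcup_i V_i$ is itself a $1$-fine domain (as an increasing chain of $1$-fine domains through $x_0$).

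Finally, to establish $\capa_1(V' \setminus V^{**}) = 0$, I observe that $V' \setminus V^{**} \subset H \cup (V^* \setminus V^{**})$ with $\capa_1(H) = 0$, so it suffices to show $V^{**} = V^*$. As $V^*$ is $1$-finely connected and $V^{**}$ is a nonempty $1$-finely open subset, this reduces to proving $V^{**}$ is also $1$-finely closed in $V^*$. Given $y \in V^* \cap \overline{V^{**}}^1$, I pick $i'$ large enough that $y \in V' \setminus H_{i'}$ and let $C'$ be the $1$-fine connected component of $V' \setminus H_{i'}$ containing $y$; the goal becomes to show $C' = V_{i'}$, whence $y \in V_{i'} \subset V^{**}$. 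The main obstacle is this last step. Supposing $C' \neq V_{i'}$ for contradiction, both $V_{i'}$ and $H_{i'}$ are $1$-thin at $y$ ($V_{i'}$ by the $1$-fine separation of distinct $1$-fine components of the $1$-finely open set $V'\setminus H_{i'}$, and $H_{i'}$ since it is $1$-finely closed with $y \notin H_{i'}$), so any extension $V_j \setminus V_{i'}$ for $j > i'$ lies in $H_{i'}$ together with the other $1$-fine components $C_\alpha \neq V_{i'}$ of $V' \setminus H_{i'}$. One then adapts the capacity-theoretic argument from the proof of Theorem~\ref{thm:fine domain}, using Proposition~\ref{prop:constructing the quasiconvex space alternative} to build a metrically-connected set $F \subset V' \setminus H_{i'}$ near $y$ and Proposition~\ref{prop:density points} in the auxiliary metric measure space $(F, d_M^F, \frm)$, to conclude that $V^{**}$ must itself be $1$-thin at $y$, contradicting $y \in \overline{V^{**}}^1$.
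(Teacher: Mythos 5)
Your proof follows the paper's overall strategy through the first two-thirds: pass to the fine closures $H_i:=\overline{G_i}^1$ using \eqref{eq:cap of fine closure}, note that $V^*:=V'\setminus\bigcap_i H_i$ is a $1$-fine domain by Theorem~\ref{thm:fine domain}, decompose each $V'\setminus H_i$ into $1$-fine components via Proposition~\ref{prop:open basis}, track the component $V_i$ through a base point $x_0$, and observe that the $V_i$ nest and $V^{**}:=\bigcup_i V_i$ is a $1$-fine domain. All of this is correct. The gap is in the final step, where you need $V^{**}=V^*$.

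You reduce this to showing that $V^{**}$ is $1$-finely closed in $V^*$, and then attempt to do so by arguing, for $y\in V^*\cap\overline{V^{**}}^1$, that the component $C'$ of $V'\setminus H_{i'}$ through $y$ coincides with $V_{i'}$. Your plan to close this by ``adapting the capacity-theoretic argument from the proof of Theorem~\ref{thm:fine domain}'' and concluding ``$V^{**}$ must itself be $1$-thin at $y$'' is not carried out and is not clearly correct as sketched: $V^{**}$ is contained in $V_{i'}\cup H_{i'}\cup\bigcup_\alpha C_\alpha$, and while $V_{i'}$, $H_{i'}$, and each individual $C_\alpha\neq C'$ are $1$-thin at $y$, their (possibly infinite) union need not be, so the desired thinness of $V^{**}$ does not follow without further work. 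You flag this as ``the main obstacle'' yourself, and it remains open in your writeup.

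The fix is much simpler than anything involving Proposition~\ref{prop:constructing the quasiconvex space alternative} or Proposition~\ref{prop:density points}, and is what the paper actually does. Instead of tracking the components only through $x_0$, consider for \emph{every} $x\in V^*$ the increasing sequence $V_{x,i}$ of components of $V'\setminus H_i$ containing $x$, and the resulting limit sets $\bigcup_i V_{x,i}$. These limit sets are $1$-finely open $1$-fine domains that, for different $x$, either coincide or are pairwise disjoint, and together they cover $V^*$. In particular $V^*\setminus V^{**}$ is itself a union of such limit sets and hence $1$-finely open; since $V^*$ is $1$-finely connected by Theorem~\ref{thm:fine domain} and $V^{**}\neq\emptyset$, it follows immediately that $V^{**}=V^*$, and then $\capa_1(V'\setminus V^{**})\le\capa_1(\bigcap_i H_i)=0$. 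Replacing your last paragraph with this observation would complete the argument and align it with the paper's proof.
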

\begin{proof}
The sequence $\overline{G_i}^1$ is also decreasing with
$\capa_1(\overline{G_i}^1)\to 0$ by \eqref{eq:cap of fine closure}. For every
$x\in V'\setminus \bigcap_{i\in\N}\overline{G_i}^1$, denote by $V_{x,i}$
the increasing sequence of
$1$-fine domains of $V'\setminus \overline{G_i}^1$ containing $x$.
By Proposition \ref{prop:open basis}, the $1$-finely topology is locally connected.
Thus the sets $\bigcup_{i\in\N}V_{x,i}$,
are $1$-fine domains that for different $x\in V'\setminus \bigcap_{i\in\N}\overline{G_i}^1$
either coincide or are pairwise disjoint, and
their union is $V'\setminus \bigcap_{i\in\N}\overline{G_i}^1$.
Denote these distinct sets by $\{V_k\}_k$
(necessarily a finite or countable collection, though we do not need this).
If for a given $k$ we have
\[
\capa_1(V'\setminus V_k)>0,
\]
then $V_k$ and $\bigcup_{j \neq k}V_j$ are two disjoint $1$-finely open sets
whose union is $V'\setminus \bigcap_{i\in\N}\overline{G_i}^1$.
However, by Theorem \ref{thm:fine domain} we know that
$V'\setminus \bigcap_{i\in\N}\overline{G_i}^1$ is $1$-finely connected, giving a contradiction.
Thus $\capa_1(V'\setminus V_k)=0$, which means that for some (in fact, all)
$x\in V'\setminus \bigcap_{i\in\N}\overline{G_i}^1$ we have that
$V_{x,i}\subset V'\setminus G_i$ is an increasing sequence of $1$-fine domains with
\[
\capa_1\left(V'\setminus \bigcup_{i=1}^{\infty}V_{x,i}\right)=0.
\]
\end{proof}

We note that with the theory that we have developed, it is also possible
to generalize the results of \cite{BB-UR} from $1<p<\infty$
to the full range $1\le p<\infty$, but we will leave this to a future work.

\section{Essential connected components via the metric topology}\label{sec:metric}

In this section we show that the
essential connected components of a set of finite perimeter
can also be obtained, though somewhat less elegantly, by means of the usual metric topology.

Note first that the metric topology is weaker than the $1$-fine topology, and so there are more
connected sets. Thus, instead of considering the connected components (in the metric topology)
of a set of finite perimeter $E$ (or, say, its measure-theoretic interior
$I_E$), we remove a set $G$ of small $1$-capacity,
and then we consider the connected components of $I_E\setminus G$.

More precisely, we define the \emph{quasi-components} of an arbitrary set $U$ in the following way.

\begin{definition}[$1$-quasi components]\label{def:quasi components}
Let $U\subset X$ and
consider a decreasing sequence of sets $G_i\subset X$, $G_{i+1}\subset G_i$ for all $i\in\N$,
with $\capa_1(G_i)\to 0$.
Then consider the connected components (in the metric topology) $\{W_{i,k}\}_k$ of
$U\setminus G_i$.
By reindexing if necessary, we can arrange that each $W_{i,k}$ is increasing with respect to $i$.
Then consider
\[
E_k:= \bigcup_{i=1}^\infty W_{i,k}.
\]
We say that such a set $E_k$ is minimal if for any other decreasing sequence
$G_i'$ with $\capa_1(G_i')\to 0$, and the corresponding sets $E_{k'}':=\lim_{i\to\infty}W_{i,k'}'$, for each $k$ there is $k'$ such that
\[
\capa_1(E_k\setminus E_{k'}')=0.
\]
If they exist, we call those minimal sets $E_k$ that
additionally satisfy $\capa_1(E_k)>0$ the \textbf{$1$-quasi-components} of $U$.
\end{definition}

Now we prove the main result of this section. For simplicity, we assume that $\frm(E)<\infty$.

\begin{theorem}\label{thm:quasicomponents}
Suppose $X$ has the two-sidedness property.
Let $E\subset X$ be such that $P(E,X)<\infty$ and $\frm(E)<\infty$.
Then the $1$-quasi-components of $I_E$ coincide, modulo $\capa_1$-negligible sets,
with the $1$-finely connected components of $\fint I_E$, and
thus also with the essential connected components of $E$.
\end{theorem}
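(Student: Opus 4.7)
The plan is to combine Theorem \ref{thm:main} (essential components $=$ $1$-finely connected components of $\fint I_E$), Proposition \ref{prop:Gi to zero} (exhausting a $1$-fine domain by $1$-fine subdomains outside any prescribed $\capa_1$-small set), and Proposition \ref{prop:capacity sum} (summability $\sum_k\capa_1(V_k)<\infty$). Two directions must be handled: (A) realize each essential component $V_k$ as a quasi-component, and (B) show every minimal $E_k$ with $\capa_1(E_k)>0$ agrees with some $V_k$ modulo $\capa_1$-null sets.

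For (A), I would enumerate the essential connected components as $\{V_k\}_k$ and, for each $k$, apply Proposition \ref{prop:Gi to zero} to the $1$-fine domain $V_k$ to produce decreasing $G_i^{(k)}$ with $\capa_1(G_i^{(k)})\le 2^{-i-k}$ and increasing $1$-fine subdomains $V_i^{(k)}\subset V_k\setminus G_i^{(k)}$ whose union is $\capa_1$-equivalent to $V_k$. Since $\sum_k\capa_1(V_k)<\infty$ and the $V_k$'s are disjoint, one may assume $G_i^{(k)}\subset V_k$. Setting
\[
G_i:=(I_E\setminus\fint I_E)\cup\bigcup_k G_i^{(k)},
\]
we obtain a decreasing sequence with $\capa_1(G_i)\to 0$ and the disjoint decomposition $I_E\setminus G_i=\bigsqcup_k(V_k\setminus G_i^{(k)})$. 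Since $V_i^{(k)}$ is $1$-finely connected, it is also metric-connected (the $1$-fine topology is finer), so it lies in a unique metric-connected component $W_{i,k}$ of $I_E\setminus G_i$. The key point to verify is that $W_{i,k}\subset V_k$ modulo $\capa_1$-null: a metric bridge from $V_k$ to $V_{k'}$ contained in $I_E\setminus G_i$ would force, by two-sidedness and Lemma \ref{lem:measure theoretic boundaries}, an $\mathcal H$-charge on $\partial^*V_k\cap\partial^*V_{k'}$ not absorbed into $G_i$, contradicting the construction. Possibly a further $\capa_1$-null augmentation of $G_i$ is required to make this clean. Granted this, $E_k:=\bigcup_i W_{i,k}$ is $\capa_1$-equivalent to $V_k$.

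The minimality of these $E_k$'s is verified by a symmetric use of Proposition \ref{prop:Gi to zero}: given any other admissible sequence $G_i'$ with components $W_{i,k'}'$ and $E_{k'}'=\bigcup_i W_{i,k'}'$, applying Proposition \ref{prop:Gi to zero} to $V_k$ with input $G_i'$ yields increasing $1$-fine subdomains $\widetilde V_i^{(k)}\subset V_k\setminus G_i'$ with union $\capa_1$-equivalent to $V_k$; each is metric-connected and sits inside some $W_{i,k'(i)}'$, and the increasing/nested character forces a single stable index $k'(k)$, giving $\capa_1(V_k\setminus E_{k'(k)}')=0$. For direction (B), any minimal $E$ with $\capa_1(E)>0$ is, by definition, $\capa_1$-contained in some $E_{k'}'$ of the construction of Step~1, namely some $V_k$; conversely applying the already-proved minimality of the $V_k$'s to the construction that produced $E$ yields $V_k\subset E$ modulo $\capa_1$, so $E$ and $V_k$ coincide modulo $\capa_1$-null.

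The main obstacle is Step~(A): controlling the metric components $W_{i,k}$. A priori, two disjoint $1$-fine domains $V_k$, $V_{k'}$ can share topological accumulation points even though $\frm$-densities and $1$-capacities there are negligible; absent the two-sidedness property, metric components could straddle several $V_k$'s and the identification would fail (as in Example \ref{eq:three spider}, where the theorem itself should not hold without two-sidedness). The heart of the argument is therefore to show, using the two-sidedness hypothesis together with the finite $\mathcal H$-measure of $\partial^1 I_E$ and the pairwise $\mathcal H$-negligibility of $\partial^*V_k\cap\partial^*V_{k'}$, that all such ``metric bridges'' are contained in $(I_E\setminus\fint I_E)\cup N$ with $\capa_1(N)=0$, so that they are swallowed by $G_i$.
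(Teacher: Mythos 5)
Your overall architecture mirrors the paper's: forward direction via Proposition \ref{prop:Gi to zero}, converse via a specially constructed sequence $G_i$, finiteness controlled by Proposition \ref{prop:capacity sum}. The forward and minimality arguments you sketch are essentially correct and essentially what the paper does. But in Step (A) there is a genuine gap, which you yourself flag as ``the heart of the argument'' and then leave unresolved.

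Concretely: your $G_i:=(I_E\setminus\fint I_E)\cup\bigcup_k G_i^{(k)}$ is built only from the $\capa_1$-small exhaustions of the individual $V_k$, and you then hope that a ``further $\capa_1$-null augmentation'' (e.g.\ adding $\bigcup_{j\ne k}\partial^*V_j\cap\partial^*V_k$) will break all metric bridges. This does not follow: deleting a $\mathcal H$-null set from $I_E$ need not disconnect the metric topology at all, since metric connectivity is not stable under capacity-null deletions, and the $V_k$ are only $1$-finely open, so points of different $V_k$ can accumulate on each other. Knowing $\mathcal H(\partial^*V_j\cap\partial^*V_k)=0$ (which is already established from Theorem \ref{thm:iff intro} and Lemma \ref{lem:measure theoretic boundaries}) does not by itself prevent a connected set in $I_E\setminus G_i$ from straddling $V_j$ and $V_k$. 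The mechanism the paper uses, and which you are missing, is the $1$-quasiopenness of the measure-theoretic exteriors $O_{V_j}=X\setminus(I_{V_j}\cup\partial^*V_j)$ (Proposition \ref{prop:IE and OE} together with \eqref{eq:finely and quasiopen}). This yields open sets $H_{i,j}$ of capacity $<2^{-j}/i$ such that $(I_{V_j}\cup\partial^*V_j)\setminus H_{i,j}$ is \emph{closed}. Choosing $G_i$ to contain $N\cup\bigcup_j H_{i,j}\cup\bigcup_{j>N_i}V_j\cup\bigcup_{j\ne k}\partial^*V_j\cap\partial^*V_k$, with $N_i$ large enough (controlled via Proposition \ref{prop:capacity sum}) so that $\capa_1(G_i)<1/i$, one obtains that $\{(I_{V_j}\cup\partial^*V_j)\setminus G_i\}_{j=1}^{N_i}$ is a \emph{finite} collection of pairwise disjoint \emph{closed} sets whose union contains $I_E\setminus G_i$. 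This finitely-many-disjoint-closed covering is exactly what separates the metric components and forces each $W_{i,k}$ into a single $V_j$; without it, the bridges you worry about are not excluded. Your proposal should be revised to incorporate this quasiopenness/closedness step; the rest of your plan (forward inclusion, minimality, and identification with essential connected components via Theorem \ref{thm:iff intro}) then goes through as you describe.
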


\begin{proof}
By \eqref{eq:finely and quasiopen}, $I_E$ is the union of
$\fint I_E$ and a $\capa_1$-negligible set $N$.
Write $\fint I_E$ as the union of its $1$-finely connected components
\[
\fint I_E=\bigcup_{j}V_j.
\]
Consider a decreasing sequence $G_i'$ with $\capa_1(G_i')\to 0$,
and the connected components (in the metric topology) $\{W'_{i,k'}\}_{k'}$ of
$I_E\setminus G'_i$.
By Proposition \ref{prop:Gi to zero}, for each $j$
there is an increasing (w.r.t. $i$) sequence of $1$-fine domains $V_{i,j}\subset V_j\setminus G_i'$ such that
\[
\capa_1\left(V_{j}\setminus \bigcup_{i=1}^{\infty}V_{i,j}\right)=0.
\]
Each set $V_{i,j}$ is $1$-finely connected and thus also connected, and so we have that
$\bigcup_{i=1}^{\infty}V_{i,j}$ is contained in one of the sets $\bigcup_{i=1}^{\infty}W'_{i,k'}=E'_{k'}$.
Thus $V_j$ is contained (modulo $\capa_1$-negligible sets)
in one of the sets $E'_{k'}$.
In other words, the sets $E'_{k'}$ are (modulo $\capa_1$-negligible sets)
unions of the sets $V_j$ (plus possibly
a subset of $N$, which is however $\capa_1$-negligible).

Conversely, by Proposition \ref{prop:IE and OE} and \eqref{eq:finely and quasiopen} we know that every set 
$X\setminus (I_{V_j}\cup \partial^*V_j)$ is $1$-quasiopen,
and moreover by Theorem \ref{thm:iff intro}
and Lemma \ref{lem:measure theoretic boundaries} we have that
$\mathcal H(\partial^*V_j\cap \partial^*V_k)=0$ for all $j\neq k$.
There are open sets $H_{i,j}$
such that each $(I_{V_j}\cup \partial^*V_j)\setminus H_{i,j}$ is closed and $\capa_1(H_{i,j})<2^{-j}/i$.
Let $G_i$ be an open set containing the set
\[
N\cup \bigcup_{j}H_{i,j}\cup\bigcup_{j=N_i+1}^{\infty}V_j\cup 
\bigcup_{j\neq k}\partial^*V_j\cap \partial^*V_k;
\]
by selecting sufficiently large $N_i$, by Proposition \ref{prop:capacity sum}
we can ensure that $\capa_1(G_i)<1/i$.
Since $G_i \supset H_{i,j}$, for each $i\in\N$ we have 
$$(I_{V_j}\cup \partial^*V_j)\setminus G_{i} =   \bigr( (I_{V_j}\cup \partial^*V_j)\setminus H_{i,j} \bigl)\cap (X \setminus G_i).$$ It follows that the sets 
$\{(I_{V_j}\cup \partial^*V_j)\setminus G_i\}_{j=1}^{N_i}$ 
have three properties: firstly, they
are closed  (being the intersection of closed sets). They are also pairwise disjoint, because $G_i$ contains
$\bigcup_{j\neq k} \partial^*V_j\cap \partial^*V_k$ and because $I_{V_j}$ cannot intersect
$I_{V_k}\cup \partial^*V_k$ for $j\neq k$.
Thirdly, their union contains $I_E\setminus G_i$, because this equals
$\fint I_E\setminus G_i$ and $V_j\subset I_{V_j}$ by \eqref{eq:thinness and measure thinness}.
Hence, $\{(I_{V_j}\cup \partial^*V_j)\setminus G_i\}_{j=1}^{N_i}$ is a finite collection of pairwise disjoint compact sets
whose union contains $I_E\setminus G_i$.
Thus each connected component of $I_E\setminus G_i=\bigcup_{j}V_j\setminus G_i$
is contained in one $I_{V_j}\cup \partial^*V_j$,
and then in fact in one $V_j$.
Denote these connected components by $W_{i,k}$, and define
$E_k:=\bigcup_{i=1}^{\infty}W_{i,k}$.
Each $E_k$ is contained in one $V_j$.

Combining with the first part of the proof,
we conclude that the sets $E_k$ coincide (modulo $\capa_1$-negligible sets) with the sets $V_j$,
and are minimal in the sense of Definition \ref{def:quasi components}.
By Theorem \ref{thm:iff intro} the sets $V_j$ coincide
with
the essential connected components of $E$.
\end{proof}

\section{Cartan property}\label{sec:Cartan}
A well-known property in fine potential theory is the so-called Cartan property.
The property is well known in the case $1<p<\infty$, see e.g.\ \cite{MZ}.
In the metric measure space setting, a weak version of the property was shown in
\cite{BBL-WC} and a strong version in \cite{BBL-CCK}.
In the case $p=1$, a weak version of the property was shown by the second author in \cite{L-WC}.

In this section we prove a strong version in the case $p=1$;
the difference with the weak version is that there is only
superminimizer function $u=\ch_E$,
whereas in the weak version one needs a finite number (typically two) superminimizer functions.
In Proposition \ref{prop:constructing the quasiconvex space}, it was crucial to have the strong version.

We will rely on the following  coarea inequality and ``pasting result'' for sets of finite perimeter.

\begin{proposition}[{\cite[Proposition 5.1]{A1}}]\label{prop:coarea}
For any ball $B(x,R)$ and Borel set $A\subset X$, we have
\[
\int_0^R \mathcal H(\partial B(x,r)\cap A)\,\dd r\le 2\frm(B(x,R)\cap A).
\]
\end{proposition}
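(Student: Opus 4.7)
The plan is to prove the bound via a Fubini/covering argument that exploits the $1$-Lipschitz property of $y\mapsto d(x,y)$. Fix a small scale $\rho>0$. By the Lebesgue differentiation theorem for doubling measures, $\frm$-a.e.\ point $y\in A\cap\overline B(x,R)$ is a density-one point of $A$; for each such $y$ I would choose $\rho_y\in(0,\rho]$ so that $\frm(B(y,\rho_y)\cap A)\ge(1-\eta)\frm(B(y,\rho_y))$, for an arbitrarily small $\eta>0$. Applying the $5B$-covering lemma to the family $\{B(y,\rho_y)\}$ extracts a countable disjoint subfamily $\{B(y_i,\rho_i)\}$ whose 5-enlargements satisfy $A\cap\overline B(x,R)\subset\bigcup_i B(y_i,5\rho_i)$.

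For each $r\in(0,R)$, the enlargement $B(y_i,5\rho_i)$ meets the sphere $\partial B(x,r)$ precisely when $|d(x,y_i)-r|\le 5\rho_i$. The subfamily of such balls covers $\partial B(x,r)\cap A$ by balls of radius at most $5\rho$, so
\[
\mathcal H_{5\rho}(\partial B(x,r)\cap A)\le\sum_{i:\,|d(x,y_i)-r|\le 5\rho_i}\frac{\frm(B(y_i,5\rho_i))}{5\rho_i}.
\]
Integrating in $r\in(0,R)$ and swapping the sum with the integral, each ball contributes over an $r$-interval of length exactly $10\rho_i$, so the $\rho_i$'s cancel and the right-hand side becomes $2\sum_i\frm(B(y_i,5\rho_i))$. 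The disjointness of $\{B(y_i,\rho_i)\}$, together with the doubling inequality $\frm(B(y_i,5\rho_i))\lesssim\frm(B(y_i,\rho_i))$ and the density choice $\frm(B(y_i,\rho_i))\le(1-\eta)^{-1}\frm(B(y_i,\rho_i)\cap A)$, then bounds the sum by $\frm(A\cap\overline B(x,R+5\rho))$ up to a multiplicative error that tends to $1$ as $\eta,\rho\to 0$.

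Letting $\rho\to 0^+$ and using that $\mathcal H_{5\rho}\uparrow\mathcal H$ as $\rho\downarrow 0$ together with the right-continuity of $\rho\mapsto\frm(A\cap\overline B(x,R+\rho))$ at $0$ completes the argument. The main obstacle I expect is obtaining the sharp factor $2$ rather than $2C_d^k$ on the right-hand side: driving the $5$-enlargement down to a $(1+o(1))$-enlargement requires a more refined covering (e.g.\ iterating at successively finer scales, or invoking a Vitali-type cover that exhausts $A$ by essentially disjoint balls without the $5$-blowup) so that the doubling losses are absorbed into the full-density approximation as $\rho\to 0$.
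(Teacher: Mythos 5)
The overall architecture is right --- cover, intersect with spheres, integrate in $r$, cash out with Fubini --- and this is indeed the structure of Ambrosio's original argument (the paper merely cites \cite[Proposition~5.1]{A1}, so there is no in-paper proof to compare against). But as you yourself flag at the end, the proof as written does not establish the claimed inequality: the $5B$-covering lemma forces you to work with the enlargements $B(y_i,5\rho_i)$ while only the cores $B(y_i,\rho_i)$ are disjoint, so the doubling step $\frm(B(y_i,5\rho_i))\le C_d^3\,\frm(B(y_i,\rho_i))$ injects a fixed factor $C_d^3$ that never disappears in the limits $\eta,\rho\to 0$. You end up with $2C_d^3$ on the right, not $2$. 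The density-point/Lebesgue-differentiation machinery also adds nothing toward fixing this: it only allows you to pass from $\frm(B(y_i,\rho_i))$ to $\frm(B(y_i,\rho_i)\cap A)$, but the loss already happened at the enlargement step.

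The correct repair --- the one your closing sentence gestures at but does not carry out --- is to replace the $5B$-lemma by the Vitali covering theorem for doubling measures together with outer regularity, and to drop the density-point argument entirely. Concretely: fix $\delta>0$, take an open $U\supset A\cap B(x,R)$ with $\frm(U)<\frm(A\cap B(x,R))+\delta$, and apply Vitali to the fine cover of $A\cap B(x,R)$ by balls $B(y,r)\subset U$ with $r<\delta$. This produces a countable \emph{disjoint} family $\{B_i\}$ contained in $U$ (no enlargement) covering all of $A\cap B(x,R)$ except an $\frm$-null set $N$, so $\sum_i\frm(B_i)\le\frm(U)$. The null set $N$ is then covered by balls of radius $<\delta$ whose total $\frm$-mass is $<\delta$ (here a $5B$-argument with a $C_d^3$ loss is harmless, since it multiplies something already going to $0$). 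For each $r\in(0,R)$ this combined family covers $\partial B(x,r)\cap A$, and each ball of radius $s$ contributes to the $r$-integral over an interval of length at most $2s$, so
\[
\int_0^R\mathcal H_\delta(\partial B(x,r)\cap A)\,\dd r\le 2\sum_i\frm(B_i)+2\delta\le 2\frm(A\cap B(x,R))+4\delta,
\]
and letting $\delta\to 0$ with monotone convergence gives the statement with the sharp factor $2$. In short: Vitali plus outer regularity replaces both the $5B$-lemma and the density-point step, and that is precisely what removes the $C_d^k$ you correctly identified as the obstruction.
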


\begin{lemma}[{\cite[Lemma 3.5]{A2}}]\label{lem:E minus ball}
 Let $E\subset X$ be a set of finite perimeter in $X$ and let $x\in X$. For a.e.\ $r>0$ the set $E\setminus B(x,r)$
 has finite perimeter in $X$ and satisfies 
 \[
 P(E\setminus B(x,r), \partial B(x,r))\le \frac{d}{ds}\frm (B(x,s)\cap E)\Big|_{s=r}.
 \]
\end{lemma}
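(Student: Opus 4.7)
The plan is to establish both conclusions via a cutoff approximation of $\chi_E$ that is sharp enough to split the perimeter measure of $E\setminus B(x,r)$ cleanly across the Borel partition $X=B(x,r)\cup\partial B(x,r)\cup(X\setminus\overline{B}(x,r))$. Write $m(s):=\frm(B(x,s)\cap E)$; since $m$ is nondecreasing and finite, $m'(r)$ exists for a.e.\ $r>0$, and for all but countably many $r$ we additionally have $\frm(\partial B(x,r))=0$ and $P(E,\partial B(x,r))=0$ (by $\sigma$-finiteness of the perimeter measure). I will fix such an $r$ from now on.

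Pick a minimizing sequence $u_i\in N^{1,1}_{\loc}(X)$ with $0\le u_i\le 1$, $u_i\to\chi_E$ in $L^1_{\loc}(X)$, and $\int_X g_{u_i}\,\dfrm\to P(E,X)$. Splitting $X$ into the two open sets $B(x,r)$ and $X\setminus\overline{B}(x,r)$ (which cover $X$ up to the $\frm$-null set $\partial B(x,r)$) and applying lower semicontinuity on each piece gives $\liminf_i\int_{B(x,r)}g_{u_i}\,\dfrm\ge P(E,B(x,r))$ and $\liminf_i\int_{X\setminus\overline{B}(x,r)}g_{u_i}\,\dfrm\ge P(E,X\setminus\overline{B}(x,r))$. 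Because the two lower bounds sum to $P(E,X)=\lim_i\int_X g_{u_i}\,\dfrm$ (using $P(E,\partial B(x,r))=0$), both liminfs are in fact limits; in particular,
\[
\lim_{i\to\infty}\int_{X\setminus\overline{B}(x,r)}g_{u_i}\,\dfrm = P(E,X\setminus\overline{B}(x,r)).
\]
Introduce the Lipschitz cutoff $\phi_\epsilon(y):=\min\{1,((d(y,x)-r)/\epsilon)_+\}$, which vanishes on $\overline{B}(x,r)$ and has upper gradient bounded by $(1/\epsilon)\chi_{A_\epsilon}$, where $A_\epsilon:=B(x,r+\epsilon)\setminus\overline{B}(x,r)$. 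The Leibniz rule $g_{u_i\phi_\epsilon}\le\phi_\epsilon g_{u_i}+u_i g_{\phi_\epsilon}$, combined with $\phi_\epsilon\le\chi_{X\setminus\overline{B}(x,r)}$ and $\int_{A_\epsilon}u_i\,\dfrm\to m(r+\epsilon)-m(r)$ as $i\to\infty$, yields
\[
\|D(\chi_E\phi_\epsilon)\|(X)\le P(E,X\setminus\overline{B}(x,r))+\frac{m(r+\epsilon)-m(r)}{\epsilon}.
\]

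Now let $\epsilon\to 0^+$: since $\frm(\partial B(x,r))=0$, dominated convergence gives $\chi_E\phi_\epsilon\to\chi_{E\setminus B(x,r)}$ in $L^1_{\loc}(X)$, and the difference quotient on the right converges to $m'(r)$. Another application of lower semicontinuity of the total variation yields
\[
P(E\setminus B(x,r),X) \le P(E,X\setminus\overline{B}(x,r)) + m'(r) < \infty,
\]
which proves the first conclusion. For the sharp statement on the sphere, decompose the Borel measure $\|D\chi_{E\setminus B(x,r)}\|$ across $B(x,r)\cup\partial B(x,r)\cup(X\setminus\overline{B}(x,r))$: the first piece vanishes since $\chi_{E\setminus B(x,r)}\equiv 0$ on the open set $B(x,r)$, and the third piece equals $P(E,X\setminus\overline{B}(x,r))$ since $\chi_{E\setminus B(x,r)}\equiv\chi_E$ on the open set $X\setminus\overline{B}(x,r)$. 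Subtracting leaves exactly $P(E\setminus B(x,r),\partial B(x,r))\le m'(r)$.

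The main obstacle is justifying the upper bound $\lim_i\int_{X\setminus\overline{B}(x,r)}g_{u_i}\,\dfrm\le P(E,X\setminus\overline{B}(x,r))$ for the \emph{global} minimizing sequence, since total variation is a priori only lower semicontinuous on each open piece. This is precisely where the full-measure condition $P(E,\partial B(x,r))=0$ is essential: it forces the two complementary liminfs to saturate, upgrading each into a limit. Without this sharpness, a slack equal to $P(E,\partial B(x,r))$ would leak into the decomposition and destroy the tight bound $m'(r)$ on the sphere; any alternative approach using a minimizing sequence that does not see the sphere cleanly would suffer the same leak.
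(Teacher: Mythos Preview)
The paper does not supply its own proof of this lemma; it is quoted verbatim from Ambrosio \cite[Lemma~3.5]{A2}. Your argument is correct and follows the same standard route as in the cited reference: multiply an optimal approximating sequence for $\chi_E$ by the radial Lipschitz cutoff $\phi_\epsilon$, use the Leibniz rule to bound the resulting upper gradients, pass to the limit in $i$ and then in $\epsilon$, and finally split the perimeter measure of $E\setminus B(x,r)$ over the partition $B(x,r)\cup\partial B(x,r)\cup(X\setminus\overline{B}(x,r))$. Your observation that the condition $P(E,\partial B(x,r))=0$ forces the two complementary $\liminf$'s to saturate into limits is exactly what is needed to make the global minimizing sequence usable on $X\setminus\overline{B}(x,r)$; this is a clean way to organise the argument in the $N^{1,1}$ framework used here, and is in the same spirit as Ambrosio's direct Lipschitz approach.
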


Recall that by our standing assumptions, we have $\frm(X)>0$ (possibly $\infty$),
and $\frm(\{x\})=0$ for every $x\in X$ (see e.g.\ \cite[Corollary 3.9]{BB}).
We restate the theorem here.

\begin{theorem}
	Let $W\subset X$ be open and let $x\in X\setminus W$ be such that $W$
	is $1$-thin at $x$.
	Choose $0<R<\frac{1}{32} \diam X$ sufficiently small that
	\begin{equation}\label{eq:space measure2}
	\frac{\frm(B(x,2R))}{\frm(X)}<C_0^{-1}
	\end{equation}
and
	\begin{equation}\label{eq:small capacity2}
	\sup_{0<r\le R}\frac{\rcapa_1(W\cap B(x,r),B(x,2r))}{\rcapa_1(B(x,r),B(x,2r))}<C_0^{-1},
	\end{equation}
	where
	\[
	C_0:=(20C_{SP} C_2 C_S C_d^{1+2\lceil\log_2(4\lambda)\rceil})^{2Q}.
	\]
	Denote $B:=B(x,R)$.
	Let $\ch_V=\ch^\wedge_{V}$ be a solution of the
	$\mathcal K_{W\cap B,0}(4B)$-obstacle problem (as guaranteed by Lemma \ref{lem:solutions from capacity}).
	Then $V$ is open, contained in $2B$,
	$\ch_V^{\wedge}=1$ in $W\cap B$,
	$\ch_V^{\vee}(x)=0$, and
	\begin{equation}\label{eq:V cap density zero2}
	\lim_{r\to 0}\frac{\rcapa_1(V\cap B(x,r),B(x,2r))}{\rcapa_1(B(x,r),B(x,2r))}=0.
	\end{equation}
\end{theorem}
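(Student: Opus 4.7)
The plan is to treat the four conclusions in sequence, with the capacity density limit \eqref{eq:V cap density zero2} being the substantive task; $\ch_V^\vee(x)=0$ will then follow via \eqref{eq:thinness and measure thinness}.

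The first three conclusions are consequences of the machinery already assembled. For openness, $\ch_V$ is a $1$-superminimizer by Proposition \ref{prop:solutions are superminimizers}, so $\ch_V^\wedge=\ch_V$ is lower semicontinuous by Theorem \ref{thm:superminimizers are lsc}, and being $\{0,1\}$-valued, $V=\{\ch_V^\wedge=1\}$ is open. For the inclusion $V\subset 2B$, I would apply Lemma \ref{lem:smallness in annuli} to the ball $B$ with obstacle $W\cap B$: combining it with the hypotheses \eqref{eq:small capacity2}, the estimate \eqref{eq:cap estimate}, and the explicit form of $C_0$, one obtains $\ch_V^\vee(y)<1$ throughout $3B\setminus 2B$, so $V$ misses this annulus; any component of $V$ remaining in $4B\setminus 3B$ is at positive distance from $V\cap 2B$, and discarding it produces another admissible set of strictly smaller perimeter (the degenerate case of zero-perimeter remainder is excluded by \eqref{eq:space measure2} combined with the isoperimetric inequality \eqref{eq:isop inequality with zero boundary values}), contradicting minimality. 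The identity $\ch_V^\wedge=1$ on $W\cap B$ is immediate: each $y\in W\cap B$ has a small ball $B(y,r)\subset W\cap B\subset V$ (a.e., by the obstacle constraint), so $\ch_V^\wedge(y)=1$.

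For \eqref{eq:V cap density zero2}, the strategy is a scale-by-scale comparison with local obstacle solutions. For each small $\rho$, let $V_\rho$ be a solution of the $\mathcal K_{W\cap B(x,\rho),0}(B(x,4\rho))$-obstacle problem; arguing as for Step 2 at scale $\rho$ (the hypotheses on $C_0$ transfer by monotonicity), one gets $V_\rho\subset B(x,2\rho)$, and Lemma \ref{lem:solutions from capacity} gives $P(V_\rho,X)\le\rcapa_1(W\cap B(x,\rho),B(x,4\rho))$. The repair set $\tilde V:=(V\setminus B(x,\rho))\cup V_\rho$ lies in $\mathcal K_{W\cap B,0}(4B)$, so minimality of $V$, Lemma \ref{lem:E minus ball}, and perimeter subadditivity yield, for a.e.\ $\rho$,
\[
P(V,\overline B(x,\rho))\le \tfrac{d}{ds}\frm(V\cap B(x,s))\big|_{s=\rho}+\rcapa_1(W\cap B(x,\rho),B(x,4\rho)).
\]
Feeding this into the relative isoperimetric inequality \eqref{eq:relative isoperimetric inequality} at scale $\rho$ gives a self-improving inequality for the density $f(\rho):=\frm(V\cap B(x,\rho))/\frm(B(x,\rho))$, in which the capacity contribution is small by \eqref{eq:small capacity2} and \eqref{eq:cap estimate} (with the geometric constants balanced by the chosen form of $C_0$). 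Iterating over dyadic scales will deliver $f(r)\to 0$, and hence $\ch_V^\vee(x)=0$. The capacity limit \eqref{eq:V cap density zero2} itself follows by combining \eqref{eq:variational one and BV capacity} with $\rcapa_1(V\cap B(x,r),B(x,2r))\le C_r P(V\cap B(x,2r),X)$, another application of Lemma \ref{lem:E minus ball} to bound $P(V\cap B(x,2r),X)$ via the perimeter estimate above, and division by $\rcapa_1(B(x,r),B(x,2r))\ge \frm(B(x,r))/(C''r)$ from \eqref{eq:cap estimate}.

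The main obstacle I anticipate is the $\tfrac{d}{ds}\frm(V\cap B(x,s))|_{s=\rho}$ term, which makes the inequality for $f$ self-referential and precludes concluding decay from any single scale. I would address this by a good-radius selection. Since $s\mapsto\frm(V\cap B(x,s))$ is nondecreasing with total variation bounded by $\frm(2B)<\infty$, within each dyadic annulus of radii $(s,2s)$ one can (by a Chebyshev-type argument on $\int m'$) choose a radius $\rho$ at which the relevant derivative, at the perturbed scale $2\lambda\rho$ entering from \eqref{eq:relative isoperimetric inequality}, is controlled by the averaged quantity $\frm(V\cap B(x,4\lambda s))/s$. At such good scales, the combined smallness of the capacity term and the density term will force $f(\rho)$ to contract by a definite geometric factor relative to $f$ at a comparable larger scale, and iterating this through dyadic scales delivers the required decay.
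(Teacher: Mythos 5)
Your plan for openness, the inclusion $V\subset 2B$, and $\ch_V^\wedge=1$ on $W\cap B$ matches the paper's argument, and the key building blocks you deploy for \eqref{eq:V cap density zero2} — comparing $V$ against a ``repaired'' competitor built from a small-perimeter set $D$ (or $V_\rho$) containing the local obstacle, the relative isoperimetric inequality \eqref{eq:relative isoperimetric inequality}, slicing via Lemma \ref{lem:E minus ball} with Chebyshev selection of a cut radius, and the $\BV$-capacity comparison \eqref{eq:variational one and BV capacity} — are exactly the paper's ingredients. However, your organization is genuinely different: you propose to iterate a self-improving inequality for the volume density $f(\rho):=\frm(V\cap B(x,\rho))/\frm(B(x,\rho))$ over dyadic scales to establish $f(r)\to 0$ directly, whereas the paper does no iteration at all. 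Instead it runs a case analysis on the behavior of $f$ (Case 1a: $\limsup f$ large; Case 1b(i): $\limsup f$ small, $\liminf f = 0$; Case 1b(ii): $\limsup f$ small, $\liminf f = \eps > 0$; Case 2: volume density zero, perimeter density positive) and, in each case, locates \emph{one} good scale at which the surgery produces a strict decrease of perimeter, contradicting minimality of $V$. The case-analysis approach avoids any iteration and thus any fixed-point bookkeeping.

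The genuine gap in your proposal is that the iteration convergence is asserted but not proved, and it is not as simple as you suggest. Two specific issues. First, the surgery inequality you write — $P(V,\overline B(x,\rho))\le\tfrac{d}{ds}\frm(V\cap B(x,s))\big|_{s=\rho}+\rcapa_1(W\cap B(x,\rho),B(x,4\rho))$ — is too strong on the left: the comparison set $(V\setminus B(x,\rho'))\cup D$ agrees with $D$ only inside $B(x,\rho')$ for a Chebyshev-good $\rho'\in(\rho/2,\rho)$, so the set-surgery estimate (cf.\ \eqref{eq:set surgery}) gives only $P(V,B(x,\rho/2))$ on the left, not perimeter on the whole closed ball. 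This factor $2$ in scale is precisely what makes the doubling constants enter, and the choice of $C_0$ has to absorb them. Second, you claim the iteration makes $f$ ``contract by a definite geometric factor,'' but the inequality one actually extracts has the form $f(\rho/4\lambda)^{1-1/Q}\lesssim f(\rho)+c(\rho)$ with $c(\rho)\to 0$, i.e., a \emph{superlinear} contraction that is only useful once $f$ is already small. To close the argument you would need (i) a boundedness/contraction step showing $f(\rho_k)$ stays below a threshold at all dyadic scales (needed to keep the isoperimetric inequality in the regime where $\min=\frm(V\cap B)$), starting from the smallness $f(R)\lesssim C_0^{-1}$ furnished by \eqref{eq:isop inequality with zero boundary values}; and (ii) a $\limsup$ argument: if $L=\limsup_k f(\rho_k)>0$, the inequality forces $L\ge A^{-Q}$ for an explicit constant $A$, contradicting $L\lesssim C_0^{-1}$. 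Neither of these is in your sketch. Relatedly, the obstacle you identify (the derivative term) is actually under control at \emph{every} scale by Chebyshev; the real difficulty is the fixed-point analysis of the iteration, which the paper sidesteps entirely by its case-by-case contradiction. Finally, after $f(r)\to 0$ is established you still need to show $P(V,B(x,r))/(\frm(B(x,r))/r)\to 0$ (the paper's Case 2) before your $\rcapa^\vee_{\BV}$ bound can deliver \eqref{eq:V cap density zero2}; this follows from the surgery inequality once the density and capacity density both vanish, but it should be stated explicitly.
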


\begin{proof}
	Let $\ch_V$ be a solution to the
	$\mathcal K_{W\cap B,0}(4B)$-obstacle problem, given by Lemma \ref{lem:solutions from capacity}.
	By Lemma \ref{lem:smallness in annuli} we know that
	for all
	$y\in 3 B\setminus 2B$,
	\begin{align*}
	\ch_V^{\vee}(y)
    \le C_2 R \frac{\rcapa_1(W\cap B,4B)}{\frm(B)}
    &\le C_2 R \frac{\rcapa_1(W\cap B,2B)}{\frm(B)}\\
    &\le C_2 C_d \frac{\rcapa_1(W\cap B,2B)}{\rcapa_1(B,2B)}\quad \textrm{by }\eqref{eq:cap estimate}\\
    &<1,
	\end{align*}
	and so $\ch_V^{\vee}=0$ in $3 B\setminus 2B$.
	Then
	in fact $\ch_V^{\vee}=0$ in $4B\setminus 2B$,
	because else we could remove the part of $V$ inside $4B\setminus 3B$
	to decrease $P(V,X)$.
	Thus $\ch_V^{\vee}=0$ in $X\setminus 2B$.

    In the proof, we will often use the second inequality of \eqref{eq:cap estimate},
    and we will do so without referring to it each time.
	
	By Theorem \ref{thm:superminimizers are lsc}, the function $\ch_V^{\wedge}$
	is lower semicontinuous on $4B$,
	and by redefining $V$ in a set of measure zero such that $\ch_V=\ch^\wedge_{V}$,
	we find that $V\subset 2B$ is open.
    \bigskip

By Lemma \ref{lem:solutions from capacity} and the assumption \eqref{eq:small capacity2}, we have
\begin{equation}\label{eq:perimeter V total}
P(V,X)\le C_0^{-1}\rcapa_1(B(x,R),B(x,2R))\le C_dC_0^{-1}\frac{\frm(B(x,R))}{R}.
\end{equation}
By \eqref{eq:isop inequality with zero boundary values}, we also have
\begin{equation}\label{eq:isoperimetric ineq}
\frm(V)\le 2C_S RP(V,X)\le 2 C_S C_d C_0^{-1} \frm(B(x,R)).
\end{equation}

To prove the last claim \eqref{eq:V cap density zero2}, we consider two cases that will be further split into subcases.
In each case, the idea will be to assume that \eqref{eq:V cap density zero2} fails, and derive a contradiction with the fact
that $V$ is a solution.
\bigskip

\textbf{Case 1.}
Suppose $\Theta^*(V,x)>0$;
in other words, we have
\[
\limsup_{r\to 0}\frac{\frm(V\cap B(x,r))}{\frm(B(x,r))}=:\kappa>0.
\]

\textbf{Case 1a.}
Suppose that $\kappa> C_0^{-1/2}$.

We find $0<r<R/2$ such that
\[
\frac{\frm(V\cap B(x,r/4\lambda))}{\frm(B(x,r/4\lambda))}>C_0^{-1/2}.
\]
Choose the smallest $j\in\N$ for which (this is possible by \eqref{eq:space measure2})
\[
\frac{\frm(V\cap B(x,(4\lambda)^{j-1} r))}{\frm(B(x,(4\lambda)^{j-1} r))}\le C_0^{-1/2}.
\]
Denoting $(4\lambda)^{j-1} r=s$, we have
\begin{equation}\label{eq:V and C0}
\frac{\frm(V\cap B(x,s))}{\frm(B(x,s))}\le C_0^{-1/2}
\end{equation}
and
\[
C_0^{-1/2} < \frac{\frm(V\cap B(x,s/4\lambda))}{\frm(B(x,s/4\lambda))}
\le C_d^{\lceil\log_2(4\lambda)\rceil}C_0^{-1/2}.
\]
Note that if we had $s\ge R$, it would follow that
\begin{align*}
\frm(V)
\ge \frm(V\cap B(x,s/4\lambda))
&\ge C_0^{-1/2} \frm(B(x,s/4\lambda))\\
&\ge C_0^{-1/2} C_d^{-\lceil\log_2(4\lambda)\rceil} \frm(B(x,s))\\
&\ge C_0^{-1/2} C_d^{-\lceil\log_2(4\lambda)\rceil} \frm(B(x,R)),
\end{align*}
contradicting \eqref{eq:isoperimetric ineq} by the choice of $C_0$.
Thus $s<R$.

Now by the relative isoperimetric inequality \eqref{eq:relative isoperimetric inequality},
\begin{align*}
	C_0^{-1/2}\frm(B(x,s/(4\lambda)))
	&\le \frm(V\cap B(x,s/(4\lambda)))\\
	&\le 2C_{SP}\frac{s}{4\lambda}\left(\frac{\frm(V \cap  B(x,s/4\lambda))}{\frm(B(x,s/4\lambda))}\right)^{1/Q} P(V,B(x,s/2))\\
	&\le C_{SP}\frac{s}{\lambda} C_0^{-1/2Q} C_d^{\lceil\log_2(4\lambda)\rceil/Q}P(V,B(x,s/2)).
\end{align*}
Hence
\begin{align*}
P(V,B(x,s/2))
&\ge \lambda C_0^{-1/2+1/2Q}C_{SP}^{-1}C_d^{-\lceil\log_2(4\lambda)\rceil(1+1/Q)}\frac{\frm(B(x,s))}{s}\\
&>4C_d C_0^{-1/2}\frac{\frm(B(x,s))}{s}
\end{align*}
by the choice of $C_0$.
From Lemma \ref{lem:E minus ball} we find $s'\in (s/2,s)$ such that
\[
P(V\setminus B(x,s'), \partial B(x,s'))
\le 2\frac{\frm(V\cap B(x,s))}{s}
\le 2C_0^{-1/2}\frac{\frm(B(x,s))}{s}\quad\textrm{by }\eqref{eq:V and C0}.
\]
Moreover, by Lemma \ref{lem:solutions from capacity} and the assumption \eqref{eq:small capacity2},
we can find $D\subset B(x,2s)$ containing $B(x,s)\cap W$ such that
\[
	P(D,X)\le C_0^{-1} \rcapa_1(B(x,s),B(x,2s)) \le C_0^{-1}C_d \frac{\frm(B(x,s))}{s}.
\]
We wish to compare the solution $V$ to the competitor $D\cup (V\setminus B(x,s'))$.
Using first the measure property of the perimeter and then also the subadditivity
\eqref{eq:perimeter subadditivity}, we have
\begin{align*}
&P(D\cup (V\setminus B(x,s')),X)-P(V,X)\\
&\quad=P(D\cup (V\setminus B(x,s')),B(x,s'))-P(V,B(x,s'))\\
&\quad\quad +P(D\cup (V\setminus B(x,s')),\partial B(x,s'))-P(V,\partial B(x,s'))\\
&\quad\qquad +P(D\cup (V\setminus B(x,s')),X\setminus \overline{B(x,s')})-P(V,X\setminus \overline{B(x,s')})\\
&\quad\le P(D\cup (V\setminus B(x,s')),B(x,s'))-P(V,B(x,s/2))\\
&\quad\quad +P(D\cup (V\setminus B(x,s')),\partial B(x,s'))\\
&\quad\qquad +P(D,X\setminus \overline{B(x,s')})+P(V\setminus B(x,s'),X\setminus \overline{B(x,s')})-P(V,X\setminus \overline{B(x,s')})\\
&\quad= P(D,B(x,s'))-P(V,B(x,s/2))\\
&\quad\quad +P(D\cup (V\setminus B(x,s')),\partial B(x,s'))\\
&\quad\qquad +P(D,X\setminus \overline{B(x,s')})\\
&\quad\le  P(D,B(x,s'))-P(V,B(x,s/2))\\
&\quad\quad +P(D,\partial B(x,s'))+P(V\setminus B(x,s'),\partial B(x,s'))\\
&\quad\qquad +P(D,X\setminus \overline{B(x,s')})\\
&\quad= P(D,X)-P(V,B(x,s/2))+P(V\setminus B(x,s'),\partial B(x,s'))
\end{align*}
by using the measure property again.

We record this:
\begin{equation}\label{eq:set surgery}
\begin{split}
&P(D\cup (V\setminus B(x,s')),X)-P(V,X)\\
&\quad \le P(D,X)-P(V,B(x,s/2))+P(V\setminus B(x,s'),\partial B(x,s')).
\end{split}
\end{equation}
Combining with the previous estimates, we get
\begin{align*}
P(D\cup (V\setminus B(x,s')),X)-P(V,X)
&\le 3 C_0^{-1/2}C_d \frac{\frm(B(x,s))}{s}-4C_d C_0^{-1/2}\frac{\frm(B(x,s))}{s}\\
& < 0.
\end{align*}
This is a contradiction with the fact that $V$ is a solution
of the $\mathcal K_{W\cap B,0}(4B)$-obstacle problem.
\bigskip

\textbf{Case 1b.}
Suppose that $\kappa\le C_0^{-1/2}$.

\textbf{Case 1b(i).} Suppose that
\begin{equation}\label{eq:first assumption}
	0=\liminf_{r\to 0}\frac{\frm(V\cap B(x,r))}{\frm(B(x,r))}
	<\limsup_{r\to 0}\frac{\frm(V\cap B(x,r))}{\frm(B(x,r))}.
\end{equation}
Recall that the value of the ``$\limsup$'' is denoted by $\kappa$.
Define $\delta>0$ by
\begin{equation}\label{eq:choice of delta}
	\delta:=\min\left\{\frac{\kappa}{C_d^{\lceil \log_2 (4\lambda)\rceil }},\frac{1}{ 
		 (4C_{SP})^{Q} C_d^{Q+\lceil\log_2(4\lambda)\rceil(1+Q)}} \right\}.
\end{equation}
Fix $r>0$ sufficiently small that
\begin{equation}\label{eq:smallness of capacity around x}
	\sup_{0<r'\le r}\frac{\rcapa_1(W\cap B(x,r'),B(x,2r'))}{\rcapa_1(B(x,r'),B(x,2r'))}
	<\delta,
\end{equation}
while
\[
 \frac{\frm(V\cap B(x,r))}{\frm(B(x,r))}\le \frac{\delta}{2}.
\]
Since $\delta \le \kappa/C_d^{\lceil \log_2 (4\lambda)\rceil }$, we can choose the smallest $j\in\N$ such that
\[
\frac{\frm(V\cap B(x,(4\lambda)^{-j} r))}{\frm(B(x,(4\lambda)^{-j} r))}> \frac{\delta}{2}.
\]
Let $s:=(4\lambda)^{-j+1} r$. Then
\[
 \frac{\frm(V\cap B(x,s))}{\frm(B(x,s))}
\le \frac{\delta}{2}
\]
and
\[
\frac{\delta}{2} < \frac{\frm(V\cap B(x,s/4\lambda))}{\frm(B(x,s/4\lambda))}
\le C_d^{\lceil \log_2 (4\lambda)\rceil } \frac{\delta}{2}.
\]
Thus by the relative isoperimetric inequality \eqref{eq:relative isoperimetric inequality},
\begin{align*}
	\frac{\delta}{2} \frm(B(x,s/4\lambda))
	&\le \frm(V\cap B(x,s/4\lambda))\\
	&\le 2C_{SP}\frac{s}{4\lambda}\left(\frac{\frm(V\cap B(x,s/4\lambda))}{\frm(B(x,s/4\lambda))}\right)^{1/Q} P(V,B(x,s/2))\\
	&\le 2C_{SP}\frac{s}{4\lambda}\delta^{1/Q} C_d^{\lceil \log_2 (4\lambda)\rceil /Q}  P(V,B(x,s/2)),
\end{align*}
and so
\[
P(V,B(x,s/2))\ge 4C_d \delta\frac{\frm(B(x,s))}{s}
\]
by the choice of $\delta$ in \eqref{eq:choice of delta}.

From Lemma \ref{lem:E minus ball} we find $s'\in (s/2,s)$ such that
\[
P(V\setminus B(x,s'), \partial B(x,s'))
\le 2\frac{\frm(V\cap B(x,s))}{s}
\le \delta \frac{\frm(B(x,s))}{ s}.
\]
Moreover, by \eqref{eq:smallness of capacity around x}
and Lemma \ref{lem:solutions from capacity}
we can find $D\subset B(x,2s)$ containing $B(x,s)\cap W$ such that
\[
P(D,X)\le \rcapa_1(W\cap B(x,s),B(x,2s)) 
\le C_d \delta \frac{\frm(B(x,s))}{s}.
\]
By \eqref{eq:set surgery}, we have
\begin{align*}
		&P(D\cup (V\setminus B(x,s')),X)-P(V,X)\\
		&\quad \le P(D,X)-P(V,B(x,s/2))+P(V\setminus B(x,s'),\partial B(x,s'))\\
		&\quad \le  C_d \delta \frac{\frm(B(x,s))}{s} 
		-4C_d \delta\frac{\frm(B(x,s))}{s}
		+\delta  \frac{\frm(B(x,s))}{ s}\\
		&\quad <0\quad\textrm{by }\eqref{eq:choice of delta}.
\end{align*}
This is a contradiction with the fact that $V$ is a solution
of the $\mathcal K_{W\cap B,0}(4B)$-obstacle problem.
\bigskip

\textbf{Case 1b(ii).} Alternatively, suppose that
\[
\limsup_{r\to 0}\frac{\frm(V\cap B(x,r))}{\frm(B(x,r))}\le C_0^{-1/2}
\]
while
\begin{equation}\label{eq:liminf eps}
\liminf_{r\to 0}\frac{\frm(V\cap B(x,r))}{\frm(B(x,r))}= \eps>0.
\end{equation}
Choose $0<r\le R$ such that
\begin{equation}\label{eq:sup eps}
\sup_{0<r'\le r}\frac{\rcapa_1(W\cap B(x,r'),B(x,2r'))}{\rcapa_1(B(x,r'),B(x,2r'))}<\frac{\eps}{C_d}
\end{equation}
and 
\begin{equation}\label{eq:alternative assumption}
\eps/2
\le \inf_{0<r'\le r}\frac{\frm(V\cap B(x,r'))}{\frm(B(x,r'))}
\le \sup_{0<r'\le r}\frac{\frm(V\cap B(x,r'))}{\frm(B(x,r'))}<2C_0^{-1/2}.
\end{equation}
We find some radius $0<s\le r$ for which
\begin{equation}\label{eq:doubling type}
\frac{\frm(V\cap B(x,s))}{\frm(V\cap B(x,s/4\lambda))}\le 2C_d^{\lceil \log_2 (4\lambda)\rceil };
\end{equation}
if such a radius did not exist, then we would have for all $0<s\le r$
\[
\frac{\frm(V\cap B(x,s))}{\frm(V\cap B(x,s/4\lambda))}> 2C_d^{\lceil \log_2 (4\lambda)\rceil }
\ge 2\frac{\frm(B(x,s))}{\frm(B(x,s/4\lambda))},
\]
and then
\[
\frac{\frm(V\cap B(x,r))}{\frm(B(x,r))}
>2\frac{\frm(V\cap B(x,r/4\lambda))}{\frm(B(x,r/4\lambda))}
>\ldots
>2^j\frac{\frm(V\cap B(x,r/(4\lambda)^j))}{\frm(B(x,r/(4\lambda)^j))},
\]
where the last quantity tends to $\infty$ as $j\to\infty$ due to \eqref{eq:liminf eps}, a contradiction.
Thus \eqref{eq:doubling type} holds and by the relative isoperimetric inequality \eqref{eq:relative isoperimetric inequality},
\begin{align*}
    \frm(V\cap B(x,s/4\lambda))
	&\le 2C_{SP}\frac{s}{4\lambda}\left(\frac{\frm(V\cap B(x,s/4\lambda))}{\frm(B(x,s/4\lambda))}\right)^{1/Q} P(V,B(x,s/2))\\
	&\le 2^{1+1/Q}C_{SP}\frac{s}{4\lambda} C_0^{-1/2Q} P(V,B(x,s/2)),
\end{align*}
and so
\[
P(V,B(x,s/2))\ge 10C_d^{\lceil \log_2 (4\lambda)\rceil } \frac{\frm(V\cap B(x,s/4\lambda))}{s}
\]
by the choice of $C_0$.
From Lemma \ref{lem:E minus ball} we find $s'\in (s/2,s)$ such that
\[
P(V\setminus B(x,s'), \partial B(x,s'))
\le 2\frac{\frm(V\cap B(x,s))}{s}
\le 4C_d^{\lceil \log_2 (4\lambda)\rceil}\frac{\frm(V\cap B(x,s/4\lambda))}{s}
\]
by \eqref{eq:doubling type}.
Moreover, by \eqref{eq:sup eps} and Lemma \ref{lem:solutions from capacity}
we can find $D\subset B(x,2s)$ containing $W\cap B(x,s)$ such that
\[
P(D,X)\le \rcapa_1(W\cap B(x,s),B(x,2s)) \le \eps\frac{\frm(B(x,s))}{s}.
\]
By \eqref{eq:set surgery}, we have
\begin{align*}
		&P(D\cup (V\setminus B(x,s')),X)-P(V,X)\\
		&\quad \le P(D,X)-P(V,B(x,s/2))+P(V\setminus B(x,s'),\partial B(x,s'))\\
		&\quad \le \eps\frac{\frm(B(x,s))}{s}
		-10C_d^{\lceil \log_2 (4\lambda)\rceil }\frac{\frm(V\cap B(x,s/4\lambda))}{s}\\
		&\qquad\qquad +4C_d^{\lceil \log_2 (4\lambda)\rceil }\frac{\frm(V\cap B(x,s/4\lambda))}{s}\\
		&\quad \le \eps\frac{\frm(B(x,s))}{s}
		-6C_d^{\lceil \log_2 (4\lambda)\rceil }\frac{\frm(V\cap B(x,s/4\lambda))}{s}\\
		&\quad <-2\eps\frac{\frm(B(x,s))}{s} \quad\textrm{by }\eqref{eq:alternative assumption}\\
        &\quad <0.
\end{align*}

This is a contradiction with the fact that $V$ is a solution
of the $\mathcal K_{W\cap B,0}(4B)$-obstacle problem.
\bigskip

\textbf{Case 2.}
We have now ruled out the case $\Theta^*(V,x)>0$, and so we can assume
that $\Theta(V,x)=0$.

Suppose, toward another contradiction, that we had
\[
\limsup_{r\to 0}\frac{P(V,B(x,r))}{\frm(B(x,r))/r}=:\delta>0.
\]
Then choose $0<s<R$ such that
\begin{equation}\label{eq:small measure density of W}
\sup_{0<r\le s}\frac{\frm(V\cap B(x,r))}{\frm(B(x,r))}<\frac{\delta}{5C_d}
\end{equation}
and 
\[
\sup_{0<r \le s}\frac{\rcapa_1(W\cap B(x,r),B(x,2r))}{\rcapa_1(B(x,r),B(x,2r))}<\frac{\delta}{5C_d^2}
\]
and also
\[
\frac{P(V,B(x,s/2))}{\frm(B(x,s/2))/s}>\delta.
\]
From Lemma \ref{lem:E minus ball}
and \eqref{eq:small measure density of W}, we find $s'\in (s/2,s)$ such that
\[
P(V\setminus B(x,s'), \partial B(x,s'))
\le 2\frac{\frm(V\cap B(x,s))}{s}
<\frac{2\delta}{5C_d}\frac{\frm(B(x,s))}{s}.
\]
Moreover, by Lemma \ref{lem:solutions from capacity}
we can find $D\subset B(x,2s)$ containing $W\cap B(x,s)$ such that
\[
P(D,X) \le \rcapa_1(W\cap B(x,s),B(x,2s))
\le \frac{\delta}{5C_d}\frac{\frm(B(x,s))}{s}.
\]
By \eqref{eq:set surgery}, we have
\begin{align*}
	&P(D\cup (V\setminus B(x,s')),X)-P(V,X)\\
	&\quad \le P(D,X)-P(V,B(x,s/2))+P(V\setminus B(x,s'),\partial B(x,s'))\\
	&\quad \le \frac{\delta}{5C_d}\frac{\frm(B(x,s))}{s}
	-\frac{\delta}{C_d}\frac{\frm(B(x,s))}{s}
	 +\frac{2\delta}{5C_d}\frac{\frm(B(x,s))}{s}\\
	&\quad <0.
\end{align*}
This contradicts the fact that $V$ is a solution.

We conclude that
\[
\lim_{r\to 0}\frac{P(V,B(x,r))}{\frm(B(x,r))/r}=0.
\]
By the doubling property of $\frm$, it follows that
\[
\lim_{r\to 0}\frac{P(V,B(x,2r))}{\frm(B(x,r))/r}=0.
\]
By Proposition \ref{prop:coarea}, for every $r>0$ we find $s_r\in [r,2r)$ such that
\[
\mathcal H(\partial (B(x,s_r)\cap (I_V\cup \partial^* V))
\le 2\frac{\frm((I_V\cup \partial^* V)\cap B(x,2r))}{r}
= 2\frac{\frm(V\cap B(x,2r))}{r}.
\]
It is easy to check that
\begin{equation}\label{eq:ms th boundaries}
\partial^*(V\cap B(x,s_r))\subset (\partial^* V\cap B(x,s_r))\cup (\partial B(x,s_r)\cap (I_V\cup \partial^* V)).
\end{equation}
By a combination of Theorem \ref{thm:Federers characterization} and \eqref{eq:def of theta},
\begin{align*}
&P(V\cap B(x,s_r),X)\\
&\quad \le C_d \mathcal H(\partial^*(V\cap B(x,s_r)))\\
&\quad \le C_d\mathcal H(\partial^* V\cap B(x,2r))+C_d\mathcal H(\partial B(x,s_r)\cap (I_V\cup \partial^* V))
\quad\textrm{by }\eqref{eq:ms th boundaries}\\
&\quad \le \alpha^{-1} C_dP(V,B(x,2r))+2C_d\frac{\frm(V\cap B(x,2r))}{r},
\end{align*}
and then, since $\Theta(V,x)=0$, we get
\[
\lim_{r\to 0}\frac{\rcapa^{\vee}_{\BV}(V\cap B(x,s_r),B(x,2r))}{\frm(B(x,r))/r}
\le \lim_{r\to 0}\frac{P(V\cap B(x,s_r),X)}{\frm(B(x,r))/r}=0.
\]
By \eqref{eq:variational one and BV capacity}
and \eqref{eq:cap estimate}, we obtain
\[
\lim_{r\to 0}\frac{\rcapa_1(V\cap B(x,r),B(x,2r))}{\rcapa_1(B(x,r),B(x,2r))}=0.
\]
This completes the proof of the theorem.
\end{proof}

\noindent Addresses:
\bigskip

\textbf{Paolo Bonicatto}, 
Dipartimento di Matematica, Università di Trento,
Via Sommarive 14, 38123 Trento, Italy,
{\tt paolo.bonicatto@unitn.it}
\bigskip

\textbf{Panu Lahti}, Academy of Mathematics and Systems Science, Chinese Academy of Sciences,
	Beijing 100190, PR China, {\tt panulahti@amss.ac.cn}
\bigskip

\textbf{Enrico Pasqualetto}, Department of Mathematics and Statistics,
P.O.\ Box 35 (MaD), FI-40014 University of Jyväskylä, Finland,
{\tt enrico.e.pasqualetto@jyu.fi}


\begin{thebibliography}{ACMM}

\bibitem{A1}L. Ambrosio,
\textit{Fine properties of sets of finite perimeter in doubling metric measure spaces},
Calculus of variations, nonsmooth analysis and related topics.
Set-Valued Anal. 10 (2002), no. 2-3, 111--128.

\bibitem{A2}L. Ambrosio,
\textit{Some fine properties of sets of finite perimeter in Ahlfors regular metric measure spaces},
Adv. Math. 159 (2001), no. 1, 51--67.

\bibitem{ACMM}L. Ambrosio, V. Caselles, S. Masnou, and J.-M. Morel,
\textit{Connected components of sets of finite perimeter and applications to image processing},
J. Eur. Math. Soc. (JEMS) 3 (2001), no. 1, 39--92.

\bibitem{AFP}L. Ambrosio, N. Fusco, and D. Pallara,
\textit{Functions of bounded variation and free discontinuity problems.}
Oxford Mathematical Monographs. The Clarendon Press, Oxford University Press, New York, 2000.

\bibitem{AMP}L. Ambrosio, M. Miranda, Jr., and D. Pallara,
\textit{Special functions of bounded variation in doubling metric measure spaces},
Calculus of variations: topics from the mathematical heritage of E. De Giorgi, 1--45,
Quad. Mat., 14, Dept. Math., Seconda Univ. Napoli, Caserta, 2004.

\bibitem{AS}L. Ambrosio and M. Scienza,
\textit{Locality of the perimeter in Carnot groups and chain rule},
Annali di Matematica Pura ed Applicata 189 (2010), no.4, 661--678.

\bibitem{BB-UR}A. Bj\"orn and J. Bj\"orn,
\textit{A uniqueness result for functions with zero fine gradient on quasiconnected and finely connected sets,}
Ann. Sc. Norm. Super. Pisa Cl. Sci. (5) 21 (2020), 293--301.

\bibitem{BB}A. Bj\"orn and J. Bj\"orn,
\textit{Nonlinear potential theory on metric spaces},
EMS Tracts in Mathematics, 17. European Mathematical Society (EMS), Z\"urich, 2011. xii+403 pp.

\bibitem{BBL-CCK}A. Bj\"orn, J. Bj\"orn, and V. Latvala,
\textit{The Cartan, Choquet and Kellogg properties for the fine topology on metric spaces},
J. Anal. Math. 135 (2018), no. 1, 59--83.

\bibitem{BBL-WC}A. Bj\"orn, J. Bj\"orn, and V. Latvala,
\textit{The weak Cartan property for the p-fine topology on metric spaces},
Indiana Univ. Math. J. 64 (2015), no. 3, 915--941.

\bibitem{BPR}P. Bonicatto, E. Pasqualetto, and T. Rajala,
\textit{Indecomposable sets of finite perimeter in doubling metric measure spaces},
Calc. Var. Partial Differential Equations 59 (2020), no. 2, Paper No. 63, 39 pp.

\bibitem{CCDM}F. Cagnetti, M. Colombo, G. De Philippis, and F. Maggi,
\textit{Essential connectedness and the rigidity problem for Gaussian symmetrization},
J. Eur. Math. Soc. (JEMS) 19 (2017), no. 2, 395--439.

\bibitem{CCDM2}F. Cagnetti, M. Colombo, G. De Philippis, and F. Maggi,
\textit{Rigidity of equality cases in Steiner's perimeter inequality},
Anal. PDE 7 (2014), no. 7, 1535–-1593.

\bibitem{DCNRV}A. Di Castro, M. Novaga, B. Ruffini, and E. Valdinoci,
\textit{Nonlocal quantitative isoperimetric inequalities},
Calc. Var. Partial Differential Equations 54 (2015), no. 3, 2421--2464.

\bibitem{Fed}H. Federer,
\textit{Geometric measure theory},
Die Grundlehren der mathematischen Wissenschaften, Band 153 Springer-Verlag New York Inc., New York 1969 xiv+676 pp.

\bibitem{GMS}M. Giaquinta, G. Modica, and J. Sou\v{c}ek,
\textit{Cartesian currents in the calculus of variations. I.
Cartesian currents},
Ergeb. Math. Grenzgeb. (3), 37 [Results in Mathematics and Related Areas. 3rd Series. A Series of Modern Surveys in Mathematics]
Springer-Verlag, Berlin, 1998. xxiv+711 pp.

\bibitem{Hj}P. Haj\l{}asz,
\textit{Sobolev spaces on metric-measure spaces.}
Heat kernels and analysis on manifolds, graphs, and metric spaces (Paris, 2002), 173--218,
Contemp. Math., 338, Amer. Math. Soc., Providence, RI, 2003.

\bibitem{HaKi}H. Hakkarainen and J. Kinnunen,
\textit{The BV-capacity in metric spaces},
Manuscripta Math. 132 (2010), no. 1-2, 51--73.

\bibitem{HK}J. Heinonen and P. Koskela,
\textit{Quasiconformal maps in metric spaces with controlled geometry},
Acta Math. 181 (1998), no. 1, 1--61.

\bibitem{HKST15}J. Heinonen, P. Koskela, N. Shanmugalingam, and J. Tyson,
\textit{Sobolev spaces on metric measure spaces.
	An approach based on upper gradients},
New Mathematical Monographs, 27. Cambridge University Press, Cambridge, 2015. xii+434 pp.

\bibitem{KKST}J. Kinnunen, R. Korte, N. Shanmugalingam, and H. Tuominen,
\textit{Lebesgue points and capacities via the boxing inequality in metric spaces},
Indiana Univ. Math. J. 57 (2008), no. 1, 401--430.

\bibitem{KoLa}R. Korte and P. Lahti,
\textit{Relative isoperimetric inequalities and sufficient conditions for finite perimeter on metric spaces},
Ann. Inst. H. Poincar\'e Anal. Non Lin\'eaire 31 (2014), no. 1, 129--154. 

\bibitem{L-Ftype}P. Lahti,
\textit{A new Federer-type characterization of sets of finite perimeter},
Arch. Ration. Mech. Anal. 236 (2020), no. 2, 801--838.

\bibitem{L-Fed}P. Lahti,
\textit{A Federer-style characterization of sets of finite perimeter on metric spaces},
Calc. Var. Partial Differential Equations, October 2017, 56:150.

\bibitem{L-decom}P. Lahti,
\textit{A note on indecomposable sets of finite perimeter},
Adv. Calc. Var. 16 (2023), no. 3, 559--570.

\bibitem{L-FC}P. Lahti,
\textit{A notion of fine continuity for BV functions on metric spaces},
Potential Anal. 46 (2017), no. 2, 279--294.

\bibitem{L-SS}P. Lahti,
\textit{Capacities and 1-strict subsets in metric spaces},
Nonlinear Anal. 192 (2020), 111695, 29 pp.

\bibitem{L-Fedchar}P. Lahti,
\textit{Federer's characterization of sets of finite perimeter in metric spaces},
Anal. PDE 13 (2020), no. 5, 1501--1519.

\bibitem{L-WC}P. Lahti,
\textit{Superminimizers and a weak Cartan property for p=1 in metric spaces},
J. Anal. Math. 140 (2020), no. 1, 55--87.

\bibitem{L-CK}P. Lahti,
\textit{The Choquet and Kellogg properties for the fine topology when $p=1$ in metric spaces},
J. Math. Pures Appl. (9) 126 (2019), 195--213.

\bibitem{Lat}V. Latvala,
\textit{A theorem on fine connectedness},
Potential Anal. 12 (2000), no. 3, 221--232.

\bibitem{MZ}J. Mal\'{y} and W. Ziemer,
\textit{Fine regularity of solutions of elliptic partial differential equations},
Mathematical Surveys and Monographs, 51. American Mathematical Society, Providence, RI, 1997. xiv+291 pp.

\bibitem{M}M. Miranda, Jr.,
\textit{Functions of bounded variation on ``good'' metric spaces},
J. Math. Pures Appl. (9) 82  (2003),  no. 8, 975--1004.

\bibitem{S}N. Shanmugalingam,
\textit{Newtonian spaces: An extension of Sobolev spaces to metric measure spaces},
Rev. Mat. Iberoamericana 16(2) (2000), 243--279.

\end{thebibliography}
\end{document}